\theoremstyle{definition}
\newtheorem{definition}{Definition}
\newtheorem{question}[definition]{Question}
\theoremstyle{plain}
\newtheorem{lemma}[definition]{Lemma}
\newtheorem{proposition}[definition]{Proposition}
\newtheorem{theorem}[definition]{Theorem}
\newtheorem{corollary}[definition]{Corollary}
\DeclareMathOperator{\id}{id}
\DeclareMathOperator{\GA}{GA}
\DeclareMathOperator{\cod}{cod}
\DeclareMathOperator{\hol}{hol}
\DeclareMathOperator{\Lie}{Lie}
\DeclareMathOperator{\vol}{vol}
\DeclareMathOperator{\dev}{dev}
\DeclareMathOperator{\univ}{univ}
\DeclareMathOperator{\amb}{amb}
\DeclareMathOperator{\proj}{proj}
\DeclareMathOperator{\Fr}{Fr}
\begin{document}

\title[Continuity of the \'{A}lvarez class]{Continuity of the \'{A}lvarez class \\ under deformations}
\author{Hiraku Nozawa}
\address{Unit\'{e} de Math\'{e}matiques Pures et Applique\'{e}s, \'{E}cole Normale Sup\'{e}rieure de Lyon, 46 all\'{e}e d'Italie 69364 Lyon Cedex 07, France}
\email{nozawahiraku@06.alumni.u-tokyo.ac.jp}
\thanks{The author was partially supported by Grant-in-Aid for JSPS Fellows (19-4609), Postdoctoral Fellowship of French government (662014L) and Research Fellowship of Canon Foudation in Europe}
\subjclass[2000]{Primary: 53C12; Secondary: 37C85}

\begin{abstract}
A manifold $M$ with a foliation $\mathcal{F}$ is minimizable if there exists a Riemannian metric $g$ on $M$ such that every leaf of $\mathcal{F}$ is a minimal submanifold of $(M,g)$. For a closed manifold $M$ with a Riemannian foliation $\mathcal{F}$, \'{A}lvarez L\'{o}pez \cite{Alvarez Lopez} defined a cohomology class of degree $1$ called the \'{A}lvarez class whose triviality characterizes the minimizability of $(M,\mathcal{F})$. In this paper, we show that the family of the \'{A}lvarez classes of a smooth family of Riemannian foliations on a closed manifold is continuous with respect to the parameter. The \'{A}lvarez class has algebraic rigidity under certain topological conditions on $(M,\mathcal{F})$ as the author showed in \cite{Nozawa}. As a corollary of these two results, we show that under the same topological conditions the minimizability of Riemannian foliations is invariant under deformations.
\end{abstract}
\maketitle

\tableofcontents

\section{Introduction}
\addtocontents{toc}{\protect\setcounter{tocdepth}{1}}

\subsection*{The minimizability of Riemannian foliations} 
The minimizability of general foliations is characterized in terms of dynamical tools, for example, foliation cycles (Sullivan \cite{Sullivan 2}) or holonomy pseudogroups (Haefliger \cite{Haefliger}). On the other hand, remarkably, the minimizability of Riemannian foliations has a strong relation with the topology of manifolds. For example,
\begin{itemize}
\item The minimizability of an orientable Riemannian foliation of codimension $q$ on an orientable closed manifold is characterized by the nontriviality of the basic cohomology of degree $q$ by a theorem of Masa \cite{Masa}  (see Section~2 of Reinhart \cite{Reinhart} or Section~2.3 of Molino \cite{Molino} for the definition of the basic cohomology).
\item For a Riemannian foliation on a closed manifold, \'{A}lvarez L\'{o}pez \cite{Alvarez Lopez} defined the \'{A}lvarez class, which is a basic cohomology class of degree $1$ whose triviality characterizes the minimizability  (see Definition \ref{Definition : Alvarez class}).
\item In particular, this characterization of the minimizability by \'{A}lvarez L\'{o}pez implies that every Riemannian foliation on a closed manifold with zero first Betti number is minimizable. This is a generalization of a theorem of Ghys \cite{Ghys} on the simply connected case.
\item A Riemannian foliation $\mathcal{F}$ on a closed manifold $M$ is minimizable if $\pi_{1}M$ is of polynomial growth and $\mathcal{F}$ is developable by a result of the author \cite{Nozawa 2}.
\end{itemize}
In this paper, we show that the \'{A}lvarez classes of a smooth family of Riemannian foliations on a closed manifold is continuous with respect to the parameter. Combining this result with a rigidity theorem of the \'{A}lvarez class in Nozawa \cite{Nozawa}, we obtain the invariance of the minimizability of Riemannian foliations under deformations under the topological conditions in \cite{Nozawa}, which implies an algebraic rigidity of \'{A}lvarez class there (see Corollary~\ref{Corollary : Invariance 2} below). 

Note that the invariance of the minimizability under deformations does not hold for general foliations. We present examples of families of foliations in Section~\ref{Section : non Riemannian} to describe the situation.

\subsection*{A continuity theorem of the \'{A}lvarez class}
Our main result is as follows: Let $M$ be a closed manifold. Let $U$ be an open neighborhood of $0$ in $\mathbb{R}^{\ell}$. Let $\{\mathcal{F}^{t}\}_{t \in U}$ be a smooth family of Riemannian foliations on $M$ over $U$ (see Definitions~\ref{definition: families of general foliations} and~\ref{definition: families of foliations}).
\begin{theorem}\label{Theorem : Continuity}
The \'{A}lvarez class $\xi(\mathcal{F}^{t})$ of $(M,\mathcal{F}^{t})$ is continuous in $H^{1}(M;\mathbb{R})$ with respect to $t$.
\end{theorem}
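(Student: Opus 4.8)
The plan is to exhibit, for every $t$, a closed $1$-form representing $\xi(\mathcal{F}^{t})$ whose $t$-dependence I can control, and then upgrade that control to continuity of the resulting class in the fixed finite-dimensional space $H^{1}(M;\mathbb{R})$. First I would fix the geometry in families: since $\{\mathcal{F}^{t}\}_{t\in U}$ is a smooth family of Riemannian foliations, the holonomy-invariant transverse metrics may be taken to vary smoothly with $t$, and I would extend them to a smooth family of bundle-like metrics $g^{t}$ on $M$ by choosing, smoothly in $t$ and via a partition of unity, a leafwise metric together with a complement $(T\mathcal{F}^{t})^{\perp}$. With $g^{t}$ fixed, the mean curvature $1$-form $\kappa^{t}$ of the leaves (the trace of the second fundamental form, flatted by $g^{t}$) is a transverse form vanishing on $T\mathcal{F}^{t}$, and because $g^{t}$, $T\mathcal{F}^{t}$ and $(T\mathcal{F}^{t})^{\perp}$ all depend smoothly on $t$, the family $\{\kappa^{t}\}$ is smooth in $t$ in the $C^{\infty}$-topology on $\Omega^{1}(M)$. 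By the theorem of \'{A}lvarez L\'{o}pez, $\xi(\mathcal{F}^{t})$ is the de Rham class of the basic component $\kappa_{b}^{t}:=P_{b}^{t}\kappa^{t}$, the $L^{2}(g^{t})$-orthogonal projection of $\kappa^{t}$ onto basic $1$-forms, and $\kappa_{b}^{t}$ is closed. Thus the theorem reduces to the continuity of $t\mapsto[\kappa_{b}^{t}]\in H^{1}(M;\mathbb{R})$.

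The obstruction to finishing immediately is that $\kappa^{t}$ is \emph{not} closed, so I cannot simply invoke continuity of the class map on closed forms; I must pass to $\kappa_{b}^{t}$, and the projection $P_{b}^{t}$ is exactly where trouble enters. To compute $\kappa_{b}^{t}$ I would invoke Molino's structure theory: lift $(M,\mathcal{F}^{t},g^{t})$ to the transverse orthonormal frame bundle $p_{t}\colon\widehat{M}^{t}\to M$, on which the lifted foliation $\widehat{\mathcal{F}}^{t}$ is transversely parallelizable and the closures of its leaves are the fibres of a locally trivial fibration $\widehat{M}^{t}\to W^{t}$ onto the basic manifold, with compact fibre. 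On $\widehat{M}^{t}$ the basic projection of a transverse form is realized by honest fibre integration over the \emph{compact} leaf closures followed by $O(q)$-averaging, and since $p_{t}^{*}$ is injective on $H^{1}$ (the fibre being compact and connected, after passing to oriented transverse frames if necessary), it suffices to control $\xi(\widehat{\mathcal{F}}^{t})$. The transverse parallelism then lets me rewrite $\kappa_{b}^{t}$ in terms of the structural Lie algebra $\mathfrak{g}^{t}$ and the invariant transverse volume, replacing the opaque $L^{2}$-projection by an algebraic expression—morally the modular data of $\mathfrak{g}^{t}$—in quantities that one can hope to track as $t$ varies.

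The hard part, which I expect to absorb most of the work, is that the leaf closures of $\mathcal{F}^{t}$ (equivalently the fibres of the basic fibration) can change dimension as $t$ varies—think of a transversely linear flow whose slope passes through rational values—so $P_{b}^{t}$ is genuinely discontinuous as an operator and the basic fibration does not vary smoothly; correspondingly the dimension of the basic cohomology only behaves semicontinuously. The point to establish is that, although the representative $\kappa_{b}^{t}$ may jump, its cohomology class does not. Concretely, I would fix a basis of $H_{1}(M;\mathbb{R})$ and prove that each period $\int_{c_{i}}\kappa_{b}^{t}$ is continuous in $t$. The mechanism I would aim for is to compare $\kappa_{b}^{t'}$, as $t'\to t$, with the smoothly varying but non-projected form $\kappa^{t}$, and to show that the discrepancy $\kappa^{t}-\kappa_{b}^{t}$ lies, modulo an exact form, in the range of leafwise operators whose contribution to periods against $H_{1}(M;\mathbb{R})$ vanishes; the part of the leafwise spectrum that accumulates at $0$ precisely when closures collapse is what threatens continuity, and the task is to show it does not affect the pairing. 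Here the compactness of the closure structure in the frame bundle and the semicontinuity of basic cohomology are the tools I would lean on to conclude that every period, hence $\xi(\mathcal{F}^{t})$ itself, is continuous.

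If the spectral comparison in the last step proves too delicate to carry out uniformly, a fallback plan is to argue continuity at each fixed $t_{0}$ separately by using a single adapted bundle-like (indeed tense) metric at $t_{0}$ and controlling its perturbation for nearby $t$, trading the global uniform statement for a pointwise one that still suffices, since continuity of a map into $H^{1}(M;\mathbb{R})$ is a local condition in $t$.
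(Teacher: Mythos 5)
Your preparatory steps are sound and coincide with the paper's Section 4: a smooth family of bundle-like metrics $g^{t}$ exists, $\kappa^{t}$ varies smoothly, and the problem reduces (via the orthonormal transverse frame bundle, transverse parallelizability, and injectivity of the pullback on $H^{1}$) to the transversely parallelizable case. You also correctly locate the obstruction: the basic projection $P_{b}^{t}$ and the basic fibration jump when the dimension of leaf closures jumps. But at exactly that point your proposal stops being a proof. The ``mechanism'' you offer --- that $\kappa^{t}-\kappa_{b}^{t}$ should lie, modulo exact forms, in the range of leafwise operators whose pairing with $H_{1}(M;\mathbb{R})$ vanishes, with the low-lying leafwise spectrum controlled by ``compactness of the closure structure'' and semicontinuity of basic cohomology --- is a restatement of the difficulty, not an argument: no operator is specified, no estimate is proposed, and no reason is given why the part of the spectrum accumulating at $0$ (which appears precisely when closures collapse) cannot contribute to periods. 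The paper's introduction states explicitly that this spectral/deformation-theoretic route fails at least directly, because the discontinuity of the spaces of basic forms breaks the continuity of the domains of the basic Laplacians; your sketch does not overcome that objection, it runs straight into it.

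What the actual proof supplies, and what is absent from your proposal, is a concrete substitute for $P_{b}^{t}$ that is continuous in $t$ \emph{by construction}. One defines a distribution $D$ on $M\times U$ cut out by the basic functions of the whole family $\mathcal{F}^{\amb}$; at $t=0$ this gives a proper submersion $\pi_{\widetilde{\mathcal{F}}}^{0}$ whose fibers contain, and in general strictly contain, the leaf closures of $\mathcal{F}^{0}$, and it extends to a smooth family of submersions whose fibers are saturated by the leaves of $\mathcal{F}^{t}$ for all small $t$. Fiber integration along this \emph{fixed} family defines $\widetilde{\kappa}_{b}^{t}=\rho_{\widetilde{\mathcal{F}}}(\kappa^{t})$, which does vary continuously. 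The hard core of the paper (its Proposition on the fiberwise average, proved via the Lie-foliation structure on the fibers, flat connections over $S^{1}$, and a leafwise Moser argument) is that $\widetilde{\kappa}_{b}^{0}$ is closed and represents $\xi(\mathcal{F}^{0})$ even though $\rho_{\widetilde{\mathcal{F}}}$ is \emph{not} the basic projection at $t=0$; none of this is anticipated in your sketch. Finally, Dom\'{i}nguez's tenseness theorem (Corollary 4.23 of his paper) is invoked to modify $g^{0}$ so that the mean curvature form itself satisfies $\kappa^{0}=\widetilde{\kappa}_{b}^{0}$, after which a two-term triangle estimate on periods concludes. Your fallback plan --- a tense metric at $t_{0}$ plus perturbation --- is essentially the weaker substitution (realizing the class only as the \'{A}lvarez form $\kappa_{b}^{0}$, not as $\kappa^{0}$) that the paper discusses at the end of Section 6 and states it does not know how to make work: since the dimension of leaf closures can change on every neighborhood of $t_{0}$, the error term $\sup_{x}\|\widetilde{\kappa}_{b}^{t}(x)-\kappa^{t}(x)\|$ need not tend to $0$ in that setting, and the continuity of the periods does not follow.
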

Let us mention why Theorem~\ref{Theorem : Continuity} does not follow from the definition of the \'{A}lvarez class or the classical deformation theory. By definition, the \'{A}lvarez classes of $(M,\mathcal{F}^{t})$ is represented by the closed $1$-form obtained by orthogonally projecting the mean curvature form of $(M,\mathcal{F}^{t},g^{t})$ to the space of basic $1$-forms on $(M,\mathcal{F}^{t})$ for any bundle-like metric $g^{t}$ on $(M,\mathcal{F}^{t})$. But the space of basic $1$-forms on $(M,\mathcal{F}^{t})$ changes discontinuously with respect to $t$ in the space of $1$-forms on $M$, when the dimension of closures of generic leaves changes. Because of this discontinuity of the spaces of basic $1$-forms, we cannot obtain a continuous family of closed $1$-forms which represents the \'{A}lvarez classes directly from the definition. Furthermore, this discontinuity of the spaces of basic $1$-forms breaks the continuity of the domains of the families of basic Laplacians. Thus we cannot apply the classical technique of deformation theory using smooth families of self-adjoint operators to show Theorem~\ref{Theorem : Continuity}, at least directly.

Let us mention why Theorem~\ref{Theorem : Continuity} does not follow from the interpretation of the \'{A}lvarez class in terms of the holonomy homomorphism of the Molino's commuting sheaf of $(M,\mathcal{F}^{t})$ by a theorem of \'{A}lvarez L\'{o}pez \cite{Alvarez Lopez 2}. If the dimension of closures of generic leaves changes, the ranks of the family of Molino's commuting sheaves as flat vector bundles changes. Hence the family of Molino's commuting sheaves is not smooth as a family of flat vector bundles, and we cannot prove the continuity directly from the result of \cite{Alvarez Lopez 2}.

To prove Theorem~\ref{Theorem : Continuity}, we will take a suitable representative $\widetilde{\kappa}_{b}$ of the \'{A}lvarez class at $t=0$, which we will call the $\widetilde{\mathcal{F}}$-integrated component of the mean curvature form. Then we will approximate the \'{A}lvarez class by non-closed $1$-forms (see Section~\ref{Section : Proof of continuity theorem}). Some technical consideration on Riemannian foliations will be needed to take the representative $\widetilde{\kappa}_{b}$ of the \'{A}lvarez class at $t=0$ in Section~\ref{Section : Representative}, which is the main part of this article.

\subsection*{Deformation of minimizable Riemannian foliations}
Combining Theorem~\ref{Theorem : Continuity} with the characterization of the minimizability by the triviality of the \'{A}lvarez class by \'{A}lvarez L\'{o}pez \cite{Alvarez Lopez}, we have  
\begin{corollary}\label{Corollary : Closedness}
In parameter spaces of smooth families of Riemannian foliations on closed manifolds, the subsets consisting of parameters corresponding to minimizable Riemannian foliations are closed.
\end{corollary}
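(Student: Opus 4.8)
The plan is to realize the set of minimizable parameters as the zero set of a continuous vector-valued function, so that its closedness becomes a formal consequence of the continuity of the \'{A}lvarez class established in Theorem~\ref{Theorem : Continuity}.

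First I would invoke the characterization of minimizability due to \'{A}lvarez L\'{o}pez \cite{Alvarez Lopez}: for a Riemannian foliation on a closed manifold, the \'{A}lvarez class $\xi(\mathcal{F})$ vanishes in $H^{1}(M;\mathbb{R})$ if and only if $(M,\mathcal{F})$ is minimizable. Applied fiberwise to a smooth family $\{\mathcal{F}^{t}\}_{t}$, this identifies the set of parameters corresponding to minimizable foliations with the zero set
\[
\{\, t : \xi(\mathcal{F}^{t}) = 0 \,\} = \xi^{-1}(\{0\}),
\]
where $\xi \colon t \mapsto \xi(\mathcal{F}^{t})$ denotes the map from the parameter space into $H^{1}(M;\mathbb{R})$. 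Next I would appeal to Theorem~\ref{Theorem : Continuity}, which asserts that $\xi$ is continuous. Since $H^{1}(M;\mathbb{R})$ is a finite-dimensional real vector space, hence Hausdorff, the singleton $\{0\}$ is closed; its preimage $\xi^{-1}(\{0\})$ under the continuous map $\xi$ is therefore closed in the parameter space, which is exactly the asserted conclusion.

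The only point requiring care is the mild gap between the local formulation of Theorem~\ref{Theorem : Continuity}, stated for a family over a neighborhood $U$ of $0$ in $\mathbb{R}^{\ell}$, and the general parameter spaces appearing in the corollary. Since closedness is a local property, it suffices to verify it near each parameter value $t_{0}$; restricting the family to such a neighborhood and recentering so that $t_{0}$ corresponds to the origin places us in the situation of Theorem~\ref{Theorem : Continuity}, which then yields continuity of $\xi$ at $t_{0}$. As this holds at every $t_{0}$, the map $\xi$ is continuous on the whole parameter space and the preceding argument applies verbatim.

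I do not expect any genuine obstacle in this corollary: all of the analytic content has already been absorbed into Theorem~\ref{Theorem : Continuity}, and the remaining step is the soft observation that the zero set of a continuous map into a Hausdorff vector space is closed. If anything, the step deserving a sentence of justification is the local-to-global reduction above, ensuring that the hypotheses of Theorem~\ref{Theorem : Continuity} are met uniformly across the parameter space rather than only at a single base point.
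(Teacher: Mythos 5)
Your argument is exactly the paper's: the corollary is obtained by combining \'{A}lvarez L\'{o}pez's characterization of minimizability as the vanishing of $\xi(\mathcal{F}^{t})$ with Theorem~\ref{Theorem : Continuity}, so the minimizable parameters form the preimage of the closed set $\{0\} \subset H^{1}(M;\mathbb{R})$ under a continuous map. Your additional remark on localizing to a neighborhood of each parameter value is a correct (and harmless) way to handle the local formulation of Theorem~\ref{Theorem : Continuity}.
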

\noindent This corollary is not true for general foliations as we will see in Example~\ref{Example : Candel Conlon}.

A foliation $\mathcal{F}$ is defined to be of polynomial growth if the fundamental group of every leaf of $\mathcal{F}$ is of polynomial growth. A group $\Gamma$ is polycyclic if there exists a sequence $\{\Gamma_{i}\}_{i=1}^{n}$ of subgroups of $\Gamma$ such that $\Gamma_{0}=\Gamma$, $\Gamma_{n}=\{1\}$, $\Gamma_{i-1} \triangleright \Gamma_{i}$ and $\Gamma_{i}/\Gamma_{i+1}$ is cyclic for every $i$. Let $(M,\mathcal{F})$ be a closed manifold with a Riemannian foliation. If $\pi_{1}M$ is polycyclic or $\mathcal{F}$ is of polynomial growth, then the integration of the \'{A}lvarez class of $(M,\mathcal{F})$ along every closed path on $M$ is the exponential of an algebraic integer by a result of the author \cite{Nozawa}. By the totally disconnectedness of the set of algebraic integers in $\mathbb{R}$, we have the following corollary of Theorem~\ref{Theorem : Continuity}: Let $M$ be a closed manifold. Let $U$ be a connected open neighborhood of $0$ in $\mathbb{R}^{\ell}$. Let $\{\mathcal{F}^{t}\}_{t \in U}$ be a smooth family of Riemannian foliations on $M$ over $U$  (see Definitions~\ref{definition: families of general foliations} and~\ref{definition: families of foliations}).
\begin{corollary}\label{Corollary : Invariance 1}
If $\pi_{1}M$ is polycyclic or $\mathcal{F}^{t}$ is of polynomial growth for every $t$, then $\xi(\mathcal{F}^{t})=\xi(\mathcal{F}^{0})$ in $H^{1}(M,\mathbb{R})$ for every $t$.
\end{corollary}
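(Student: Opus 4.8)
The plan is to deduce Corollary~\ref{Corollary : Invariance 1} from the already-established Theorem~\ref{Theorem : Continuity} together with the arithmetic rigidity result of the author quoted in the paragraph preceding the statement. Let me first recall the two ingredients. Theorem~\ref{Theorem : Continuity} says the map $t \mapsto \xi(\mathcal{F}^t) \in H^1(M;\mathbb{R})$ is continuous. The cited result from \cite{Nozawa} says that, under either hypothesis ($\pi_1 M$ polycyclic, or every $\mathcal{F}^t$ of polynomial growth), the integral $\int_\gamma \xi(\mathcal{F}^t)$ along any closed path $\gamma$ in $M$ equals the logarithm of the exponential of an algebraic integer — equivalently, $\exp(\int_\gamma \xi(\mathcal{F}^t))$ is an algebraic integer. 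So for each fixed loop $\gamma$, the real number $\langle \xi(\mathcal{F}^t), [\gamma]\rangle$ is constrained to lie in a very thin set. The idea is that continuity forces a continuously varying quantity that is confined to a totally disconnected set to be constant.

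The key topological fact I would use is that the set $A$ of algebraic integers is a countable subset of $\mathbb{R}$, hence totally disconnected: its only connected subsets are singletons. I would first fix a basis $[\gamma_1], \dots, [\gamma_b]$ of the free part of $H_1(M;\mathbb{Z})$, where $b = b_1(M)$; pairing against these classes identifies $H^1(M;\mathbb{R})$ with $\mathbb{R}^b$ via the map $\alpha \mapsto (\langle \alpha, [\gamma_i]\rangle)_{i=1}^b$. Since $U$ is connected, it is path-connected (being open in $\mathbb{R}^\ell$), so fix any $t \in U$ and choose a continuous path $s \mapsto t_s$, $s \in [0,1]$, in $U$ from $t_0 = 0$ to $t_1 = t$. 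For each fixed $i$, consider the composite function
\begin{equation*}
f_i : [0,1] \longrightarrow \mathbb{R}, \qquad f_i(s) = \big\langle \xi(\mathcal{F}^{t_s}), [\gamma_i] \big\rangle .
\end{equation*}
By Theorem~\ref{Theorem : Continuity} the family $\xi(\mathcal{F}^{t_s})$ is continuous in $H^1(M;\mathbb{R})$, so $f_i$ is a continuous real-valued function on the interval $[0,1]$. By the cited result of \cite{Nozawa}, for every $s$ the value $\exp\!\big(f_i(s)\big)$ is an algebraic integer, i.e.\ $\exp \circ f_i$ maps $[0,1]$ into the set $A$ of algebraic integers.

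The central argument is then a connectedness step: the image of the connected set $[0,1]$ under the continuous map $\exp \circ f_i$ is a connected subset of $\mathbb{R}$ contained in $A$; since $A$ is totally disconnected, this image is a single point, whence $\exp(f_i(s))$ is independent of $s$, and therefore $f_i(s) = f_i(0)$ for all $s$ (the exponential being injective on $\mathbb{R}$). In particular $f_i(1) = f_i(0)$, i.e.\ $\langle \xi(\mathcal{F}^t), [\gamma_i]\rangle = \langle \xi(\mathcal{F}^0), [\gamma_i]\rangle$. As this holds for every $i = 1, \dots, b$ and the pairings against $[\gamma_1],\dots,[\gamma_b]$ separate points of $H^1(M;\mathbb{R})$, we conclude $\xi(\mathcal{F}^t) = \xi(\mathcal{F}^0)$; since $t \in U$ was arbitrary, this proves the corollary.

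I do not expect any genuine obstacle, as the corollary is essentially a formal consequence of the two deep results it combines. The one point requiring a little care is the rigorous justification that the set of algebraic integers is totally disconnected in $\mathbb{R}$: this follows because $\overline{\mathbb{Q}} \cap \mathbb{R}$ is countable, and every countable subset of $\mathbb{R}$ is totally disconnected (a nondegenerate interval is uncountable, so a connected subset with more than one point cannot be countable). A second minor point is verifying that evaluation against a homology basis is injective on $H^1(M;\mathbb{R})$, which is immediate from the universal coefficient theorem / the nondegeneracy of the de Rham pairing with the free part of $H_1(M;\mathbb{Z})$. Everything else is a direct chaining of the hypotheses, and the whole argument is short.
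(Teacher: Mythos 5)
Your proposal is correct and follows exactly the paper's argument: the paper derives this corollary in the paragraph preceding its statement by combining Theorem~\ref{Theorem : Continuity} with the arithmetic rigidity result of \cite{Nozawa} and the total disconnectedness of the set of algebraic integers in $\mathbb{R}$. Your write-up merely fills in the routine details (choice of a homology basis, path-connectedness of $U$, countable sets being totally disconnected) that the paper leaves implicit.
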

The \'{A}lvarez class changes nontrivially for examples of families of solvable Lie foliations constructed by Meigniez \cite{Meigniez} (see also \cite{Meigniez 2}) as we mentioned in \cite{Nozawa}. Hence Corollary~\ref{Corollary : Invariance 1} is not true in general. By the characterization of the minimizability by the triviality of the \'{A}lvarez class by \'{A}lvarez L\'{o}pez \cite{Alvarez Lopez} and Corollary~\ref{Corollary : Invariance 1}, we have 
\begin{corollary}\label{Corollary : Invariance 2}
If $\pi_{1}M$ is polycyclic or $\mathcal{F}^{t}$ is of polynomial growth for every $t$, then one of the following holds:
\begin{enumerate}
\item For every $t$ in $U$, $(M,\mathcal{F}^{t})$ is minimizable.
\item For every $t$ in $U$, $(M,\mathcal{F}^{t})$ is not minimizable.
\end{enumerate}
\end{corollary}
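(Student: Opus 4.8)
The plan is to derive the dichotomy directly from the constancy of the \'{A}lvarez class established in Corollary~\ref{Corollary : Invariance 1}, together with the characterization of minimizability by the \'{A}lvarez class due to \'{A}lvarez L\'{o}pez \cite{Alvarez Lopez}.

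First I would recall the characterization: for a Riemannian foliation on a closed manifold, $(M,\mathcal{F}^{t})$ is minimizable if and only if its \'{A}lvarez class $\xi(\mathcal{F}^{t})$ vanishes in $H^{1}(M;\mathbb{R})$. Thus the two alternatives in the statement correspond precisely to the two mutually exclusive possibilities $\xi(\mathcal{F}^{0})=0$ and $\xi(\mathcal{F}^{0})\neq 0$.

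Next, under the hypothesis that $\pi_{1}M$ is polycyclic or that $\mathcal{F}^{t}$ is of polynomial growth for every $t$, Corollary~\ref{Corollary : Invariance 1} gives $\xi(\mathcal{F}^{t})=\xi(\mathcal{F}^{0})$ for every $t\in U$. Combining this with the characterization, the set of parameters $t$ for which $\xi(\mathcal{F}^{t})=0$ is either all of $U$ or empty, according to whether $\xi(\mathcal{F}^{0})$ vanishes. In the first case every $(M,\mathcal{F}^{t})$ is minimizable, yielding alternative (1); in the second case none is, yielding alternative (2).

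Since all the analytic content is already packaged in Corollary~\ref{Corollary : Invariance 1} --- which itself rests on the continuity Theorem~\ref{Theorem : Continuity} and the arithmetic rigidity of \cite{Nozawa} via the total disconnectedness of the set of algebraic integers --- there is no genuine obstacle remaining at this stage; the argument is a formal combination of the two cited results. The only point I would keep in mind is that $U$ is assumed connected, which is exactly what allows Corollary~\ref{Corollary : Invariance 1} to propagate the value of the \'{A}lvarez class from $t=0$ to all of $U$.
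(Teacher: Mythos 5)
Your proof is correct and follows exactly the paper's own route: the paper derives Corollary~\ref{Corollary : Invariance 2} in one line by combining the \'{A}lvarez L\'{o}pez characterization of minimizability (triviality of $\xi(\mathcal{F}^{t})$) with the constancy $\xi(\mathcal{F}^{t})=\xi(\mathcal{F}^{0})$ from Corollary~\ref{Corollary : Invariance 1}, which is precisely your argument. Your remark that connectedness of $U$ is what makes Corollary~\ref{Corollary : Invariance 1} applicable is a fair point of care, consistent with the paper's hypotheses.
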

\noindent Note that $\mathcal{F}$ is always of polynomial growth if $\dim \mathcal{F}=1$. Hence for Riemannian flows, the minimizability is invariant under deformations. As noted above, the invariance of the minimizability under deformations is not true for general foliations. For Riemannian foliations, it is not clear if the minimizability is invariant under deformations in general. We ask
\begin{question}\label{Question : Invariance}
Is the minimizability of Riemannian foliations on closed manifolds invariant under deformations?
\end{question}

Let $(M,\mathcal{F})$ be a closed $4$-manifold with a $\GA(1)$-Lie foliation (see Section \ref{Section : Lie foliations} for terminologies on Lie foliations). By a theorem of Matsumoto and Tsuchiya \cite{Matsumoto Tsuchiya}, $(M,\mathcal{F})$ is a homogeneous $\GA(1)$-Lie foliation up to a finite covering. In particular, $\pi_{1}M$ is isomorphic to a finite index subgroup of a lattice in a connected simply connected solvable Lie group. It is well known that a lattice of a connected simply connected solvable Lie group is polycyclic (see Raghunathan \cite{Raghunathan}). Since it follows from definition that every subgroup of a polycyclic group is polycyclic, $\pi_{1}M$ is polycyclic. On the other hand, a $\GA(1)$-Lie foliation on a closed manifold is non-minimizable, and an $\mathbb{R}^{2}$-Lie foliation on a closed manifold is minimizable. In fact, the following is true: 
\begin{proposition}\label{prop : GLiefoliation}
A $G$-Lie foliation $\mathcal{F}$ on a closed manifold $M$ is minimizable if and only if both of $G$ and $K$ are unimodular, where $K$ is a simply connected Lie group whose Lie algebra is the structural Lie algebra of the Lie foliation on the closure of a leaf of $\mathcal{F}$.
\end{proposition}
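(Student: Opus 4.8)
The plan is to combine the characterization of minimizability by the triviality of the \'{A}lvarez class due to \'{A}lvarez L\'{o}pez \cite{Alvarez Lopez} with the structure theory of Lie foliations, reducing the vanishing of $\xi(\mathcal{F})$ to the vanishing of two modular characters. Recall that a $G$-Lie foliation is given by a holonomy homomorphism $h \colon \pi_{1}M \to G$; write $\Gamma = h(\pi_{1}M)$ and let $\overline{\Gamma}$ be its closure in $G$. By the structure theory of Lie foliations (see \cite{Molino}), the leaf closures of $\mathcal{F}$ are the fibers of a fibration $\pi \colon M \to W$ onto the compact homogeneous space $W = \overline{\Gamma} \backslash G$, so that $\overline{\Gamma}$ is cocompact in $G$; moreover the restriction of $\mathcal{F}$ to each leaf closure is a dense homogeneous Lie foliation whose structural Lie algebra is $\mathfrak{k} = \Lie \overline{\Gamma}^{0}$, the Lie algebra of $K$. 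The transverse parallelism of $\mathcal{F}$ is furnished by the infinitesimal transverse $G$-action, that is, by the pullback of a basis of right-invariant vector fields on $G$.

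First I would fix a bundle-like metric adapted to the flag $T\mathcal{F} \subset T\overline{\mathcal{F}} \subset TM$, where $T\overline{\mathcal{F}}$ is tangent to the leaf closures, and compute the mean curvature form $\kappa$. Using the transverse parallelism and the Maurer--Cartan equation one expresses $\kappa_{b}$, the basic component representing $\xi(\mathcal{F})$, in terms of the structure constants of $\mathfrak{g}$; I expect it to split as $\kappa_{b} = \kappa' + \kappa''$, where $\kappa'$ lies along the leaf-closure directions $T\overline{\mathcal{F}}/T\mathcal{F}$ and $\kappa''$ along the horizontal directions $\pi^{*}TW$. The component $\kappa'$ is the mean curvature of the dense Lie $K$-foliation on a leaf closure, and its class is governed by the modular character $\chi_{\mathfrak{k}} = \tr \circ \operatorname{ad}$ of $\mathfrak{k}$; the component $\kappa''$ measures the transverse volume distortion over $W$ and is governed by the modular function $\delta_{G}$ of $G$.

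With this in hand the two directions run as follows. If $G$ and $K$ are unimodular, then $\chi_{\mathfrak{k}} = 0$ and $\delta_{G} \equiv 1$, so the bi-invariant Haar volumes of $G$ and of $K$ combine to a holonomy-invariant transverse volume form; this is a nonzero closed basic form of top degree $q = \dim G$, whence $H^{q}_{b}(M,\mathcal{F}) \neq 0$ and $(M,\mathcal{F})$ is minimizable by Masa's theorem \cite{Masa} (equivalently $\xi(\mathcal{F}) = 0$). Conversely, if $\xi(\mathcal{F}) = 0$ then both $[\kappa']$ and $[\kappa'']$ vanish. Vanishing of $[\kappa']$ forces the dense Lie $K$-foliation on the leaf closures to be taut, which, by the nonvanishing of the top Lie algebra cohomology $H^{\dim \mathfrak{k}}(\mathfrak{k})$ exactly when $\mathfrak{k}$ is unimodular, is equivalent to $K$ being unimodular. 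Vanishing of $[\kappa'']$ is equivalent to the existence of a $\overline{\Gamma}$-invariant volume form on $W$, that is, to $\delta_{G}|_{\overline{\Gamma}} \equiv 1$. Here the cocompactness of $\overline{\Gamma}$ gives the clean conclusion: once $\delta_{G}$ is trivial on $\overline{\Gamma}$ it is left $\overline{\Gamma}$-invariant, so it descends to a continuous map on the compact space $W = \overline{\Gamma} \backslash G$ whose image is a compact, hence trivial, subgroup of $\mathbb{R}_{>0}$; therefore $\delta_{G} \equiv 1$ and $G$ is unimodular.

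The main obstacle I anticipate is the precise computation of $\kappa_{b}$ and its clean separation into the two modular contributions. The leaf-closure directions and the horizontal directions are genuinely coupled through the extension $0 \to \mathfrak{k} \to \mathfrak{g} \to \mathfrak{g}/\mathfrak{k} \to 0$, and, in the homogeneous model, through the unimodularity of the ambient groups carrying lattices, which forces identities such as $\chi_{\mathfrak{g}}|_{\mathfrak{k}} = \chi_{\mathfrak{k}}$; so one must check that the adapted bundle-like metric really decouples them at the level of cohomology classes. A secondary point is to reduce the general, possibly non-homogeneous, Lie foliation to this homogeneous computation by means of Molino's theory, so that the representative $\kappa_{b}$ and its periods may be read off from $h$ and the modular data of $G$ and $K$ alone.
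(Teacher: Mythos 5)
Your first step---reducing minimizability to the triviality of $\xi(\mathcal{F})$, equivalently to $H^{q}_{b}(M/\mathcal{F})\neq 0$ via \cite{Alvarez Lopez} and \cite{Masa}---coincides with the paper's. But the paper then simply quotes Theorem 1.2.4 of El Kacimi and Nicolau \cite{El Kacimi Nicolau}, which says precisely that $H^{q}_{b}(M/\mathcal{F})\neq 0$ if and only if $G$ and $K$ are unimodular; you instead attempt to re-derive that equivalence, and your derivation has a genuine gap in the ``if'' direction. You construct a holonomy-invariant transverse volume form and conclude that ``this is a nonzero closed basic form of top degree $q$, whence $H^{q}_{b}\neq 0$.'' That inference is false: a nowhere-vanishing closed basic $q$-form exists on \emph{every} $G$-Lie foliation, with no unimodularity hypothesis at all. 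Indeed, let $\omega$ be the right-invariant Haar volume form on $G$; since $R_{\hol(\gamma)}^{*}\omega=\omega$ for every $\gamma\in\pi_{1}M$, the form $\dev^{*}\omega$ is $\pi_{1}M$-invariant and descends to a basic $q$-form on $M$ which is nowhere vanishing and closed (it is of top degree on $G$); see (I)--(III) of Section~\ref{Section : Lie foliations}. If your inference were valid, every Lie foliation on a closed manifold would be minimizable, contradicting classical examples such as Carri\`{e}re's flow on a hyperbolic torus bundle, a homogeneous $\GA(1)$-Lie foliation with $H^{2}_{b}=0$---exactly the kind of example this proposition is meant to detect. The point is that in the basic complex a nowhere-vanishing closed form of top degree can be exact: one cannot integrate basic forms over a ``leaf space,'' so there is no Stokes-type obstruction to $\omega$ being $d$ of a basic $(q-1)$-form, and top-degree basic cohomology fails Poincar\'{e} duality precisely in the non-taut case. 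Proving that the Haar class is non-exact exactly when both $G$ and $K$ are unimodular is the actual content of \cite{El Kacimi Nicolau}; note that $K$ plays no visible role in your construction of the form, which is a symptom of the same problem.

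The converse direction also rests on an unestablished step: the splitting $[\kappa_{b}]=[\kappa']+[\kappa'']$ with the two classes vanishing independently. Since the leaf-closure directions and the horizontal directions are coupled through the extension $0\to\mathfrak{k}\to\mathfrak{g}\to\mathfrak{g}/\mathfrak{k}\to 0$, the forms $\kappa'$ and $\kappa''$ need not be separately closed, so their ``classes'' are not even defined without further argument, and $[\kappa_{b}]=0$ does not formally decouple into two vanishing conditions---you flag this yourself as the main obstacle, but it is where the proof would actually live. (Your final observation is correct and nice: if $\delta_{G}$ is trivial on the cocompact subgroup $\overline{\Gamma}$, it descends to a continuous map on the compact space $\overline{\Gamma}\backslash G$ whose image is a compact subgroup of $\mathbb{R}_{>0}$, forcing $\delta_{G}\equiv 1$; but this sits on top of the unproved decoupling.) The efficient repair is the paper's own proof: keep your first step and, for the second, quote Theorem 1.2.4 of \cite{El Kacimi Nicolau} rather than recomputing it.
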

\noindent Proposition \ref{prop : GLiefoliation} follows from a generalization of a theorem of Masa \cite{Masa} by \'{A}lvarez L\'{o}pez \cite{Alvarez Lopez} to the non-orientable case and Theorem 1.2.4 of El Kacimi and Nicolau \cite{El Kacimi Nicolau} as follows: Let $q=\cod(M,\mathcal{F})$. A theorem of \'{A}lvarez L\'{o}pez \cite{Alvarez Lopez} implies that $(M,\mathcal{F})$ is minimizable if and only if the basic cohomology $H^{q}_{b}(M/\mathcal{F})$ of degree $q$ is nontrivial. Then Theorem 1.2.4 of El Kacimi and Nicolau \cite{El Kacimi Nicolau} implies that $H^{q}_{b}(M/\mathcal{F})$ is nontrivial if and only if $G$ and $K$ are unimodular. By the polycyclicity of $\pi_{1}M$, Corollary~\ref{Corollary : Invariance 2} and Proposition \ref{prop : GLiefoliation}, we get
\begin{corollary}\label{Corollary : Invariance 22}
A $\GA(1)$-Lie foliation on a closed $4$-manifold cannot be deformed into an $\mathbb{R}^{2}$-Lie foliation.
\end{corollary}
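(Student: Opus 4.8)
The plan is to argue by contradiction, feeding the minimizability criterion of Proposition~\ref{prop : GLiefoliation} into the dichotomy of Corollary~\ref{Corollary : Invariance 2}. Suppose, to the contrary, that some $\GA(1)$-Lie foliation $\mathcal{F}^{0}$ on a closed $4$-manifold $M$ can be deformed into an $\mathbb{R}^{2}$-Lie foliation; by a deformation I mean a smooth family $\{\mathcal{F}^{t}\}_{t \in U}$ of Riemannian foliations on $M$ over a connected open neighborhood $U$ of $0$ in $\mathbb{R}^{\ell}$ such that $\mathcal{F}^{0}$ is the given $\GA(1)$-Lie foliation and $\mathcal{F}^{t_{0}}$ is an $\mathbb{R}^{2}$-Lie foliation for some $t_{0} \in U$. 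The goal is then to produce a contradiction.

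First I would check that Corollary~\ref{Corollary : Invariance 2} applies to the whole family. The manifold $M$ is fixed along the deformation and already carries the $\GA(1)$-Lie foliation $\mathcal{F}^{0}$, so the theorem of Matsumoto and Tsuchiya~\cite{Matsumoto Tsuchiya} shows that $\pi_{1}M$ is isomorphic to a finite-index subgroup of a lattice in a connected simply connected solvable Lie group. Such a lattice is polycyclic by Raghunathan~\cite{Raghunathan}, and every subgroup of a polycyclic group is polycyclic, so $\pi_{1}M$ is polycyclic. Since this is a property of $M$ alone and is independent of $t$, the hypothesis of Corollary~\ref{Corollary : Invariance 2} is satisfied for $\{\mathcal{F}^{t}\}_{t \in U}$.

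Next I would determine the minimizability of the two endpoint foliations from Proposition~\ref{prop : GLiefoliation}. The group $\GA(1)$ of orientation-preserving affine transformations of the line is not unimodular, so regardless of the structural group $K$ at $t=0$ the criterion that both $G$ and $K$ be unimodular already fails, and $\mathcal{F}^{0}$ is non-minimizable. For $\mathcal{F}^{t_{0}}$, the group $\mathbb{R}^{2}$ is abelian and hence unimodular, and its structural Lie algebra on the closure of a leaf is a subquotient of the abelian algebra $\mathbb{R}^{2}$, so the associated simply connected group $K$ is abelian and therefore unimodular as well; thus $\mathcal{F}^{t_{0}}$ is minimizable.

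Finally, Corollary~\ref{Corollary : Invariance 2} asserts that along the connected family $\{\mathcal{F}^{t}\}_{t \in U}$ either every foliation is minimizable or none is, contradicting the coexistence of the non-minimizable $\mathcal{F}^{0}$ and the minimizable $\mathcal{F}^{t_{0}}$; hence no such deformation exists. The one place demanding care, rather than a genuine obstacle, is the verification that the hypotheses of Corollary~\ref{Corollary : Invariance 2} hold uniformly along the family: the polycyclicity of $\pi_{1}M$ must be read off from $M$ alone, via \cite{Matsumoto Tsuchiya} and \cite{Raghunathan}, independently of $t$, and the deformation must be interpreted within smooth families of Riemannian foliations so that the corollary applies at all. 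All the analytic difficulty is already absorbed into Corollary~\ref{Corollary : Invariance 2} and, through it, into the continuity Theorem~\ref{Theorem : Continuity}; the unimodularity bookkeeping for $\GA(1)$ and $\mathbb{R}^{2}$ is then routine.
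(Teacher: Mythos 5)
Your proposal is correct and follows essentially the same route as the paper: polycyclicity of $\pi_{1}M$ via Matsumoto--Tsuchiya and Raghunathan, non-minimizability of the $\GA(1)$-Lie foliation versus minimizability of the $\mathbb{R}^{2}$-Lie foliation via Proposition~\ref{prop : GLiefoliation}, and the dichotomy of Corollary~\ref{Corollary : Invariance 2} to rule out the deformation. Your added unimodularity bookkeeping for $K$ (the closure of the holonomy group of an $\mathbb{R}^{2}$-Lie foliation being abelian) is a correct filling-in of a step the paper leaves implicit.
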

\noindent In dimensions lower than $4$, Corollary~\ref{Corollary : Invariance 22} is easily confirmed to be true is as a consequence of classification of Riemannian foliations. In higher dimensions, it is not clear if a similar result is true or not.

\subsection*{The invariance of basic cohomology of Riemannian foliations under deformations}
Let $U$ be a connected open neighborhood of $0$ in $\mathbb{R}^{\ell}$. Let $\{\mathcal{F}^{t}\}_{t \in U}$ be a smooth family of Riemannian foliations of codimension $q$ on $M$ over $U$. The dimension of the basic cohomology $H_{b}^{q}(M/\mathcal{F}^{t})$ of degree $q$ is either of $1$ or $0$ by a result of El Kacimi, Sergiescu and Hector \cite{El Kacimi Sergiescu Hector}. As remarked by \'{A}lvarez L\'{o}pez in the proof of Corollary~6.2 of \cite{Alvarez Lopez}, the triviality of the \'{A}lvarez class directly implies the nontriviality of $H_{b}^{q}(M/\mathcal{F}^{t})$. Hence Corollary~\ref{Corollary : Invariance 2} is paraphrased to
\begin{corollary}\label{Corollary : Invariance 3}
If $\pi_{1}M$ is polycyclic or $\mathcal{F}^{t}$ is of polynomial growth for every $t$, then we have $H_{b}^{q}(M/\mathcal{F}^{t}) \cong H_{b}^{q}(M/\mathcal{F}^{0})$ for every $t$ in $U$.
\end{corollary}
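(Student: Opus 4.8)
The plan is to derive Corollary~\ref{Corollary : Invariance 3} as a formal consequence of Corollary~\ref{Corollary : Invariance 2} together with the two auxiliary facts already recalled in the text: the dichotomy of El Kacimi, Sergiescu and Hector stating that $\dim H_{b}^{q}(M/\mathcal{F}^{t}) \in \{0,1\}$, and the remark of \'{A}lvarez L\'{o}pez that triviality of the \'{A}lvarez class forces $H_{b}^{q}(M/\mathcal{F}^{t})$ to be nontrivial. In fact, by the theorem of Masa (in the non-orientable generalization of \'{A}lvarez L\'{o}pez), minimizability of $(M,\mathcal{F}^{t})$ is equivalent to $H_{b}^{q}(M/\mathcal{F}^{t}) \neq 0$. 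Combined with the $\{0,1\}$ dichotomy, this gives a clean equivalence: for each fixed $t$, the foliation $(M,\mathcal{F}^{t})$ is minimizable if and only if $\dim H_{b}^{q}(M/\mathcal{F}^{t}) = 1$, and non-minimizable if and only if $\dim H_{b}^{q}(M/\mathcal{F}^{t}) = 0$.

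With this dictionary in place, the argument is immediate. Under the hypothesis that $\pi_{1}M$ is polycyclic or that $\mathcal{F}^{t}$ is of polynomial growth for every $t$, Corollary~\ref{Corollary : Invariance 2} tells us that exactly one of two global alternatives holds over the connected parameter space $U$: either every $(M,\mathcal{F}^{t})$ is minimizable, or every $(M,\mathcal{F}^{t})$ is non-minimizable. I would treat these two cases separately. In the first case, $\dim H_{b}^{q}(M/\mathcal{F}^{t}) = 1$ for all $t \in U$; in the second case, $\dim H_{b}^{q}(M/\mathcal{F}^{t}) = 0$ for all $t \in U$. In either case the dimension of $H_{b}^{q}(M/\mathcal{F}^{t})$ is constant in $t$, and since these cohomology groups are finite-dimensional real vector spaces, equality of dimension yields the abstract isomorphism $H_{b}^{q}(M/\mathcal{F}^{t}) \cong H_{b}^{q}(M/\mathcal{F}^{0})$ asserted in the statement.

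Since the isomorphism claimed is only an abstract isomorphism of real vector spaces (not a canonical or natural one), no continuity or smoothness of the comparison map needs to be established, and the proof requires nothing beyond equality of dimensions. The only point deserving care is the bookkeeping of the equivalences so that the two global alternatives of Corollary~\ref{Corollary : Invariance 2} are correctly matched with the two possible values of $\dim H_{b}^{q}$; this is where the non-orientable version of Masa's theorem and the \'{A}lvarez L\'{o}pez remark must be invoked in tandem. There is no genuine obstacle here: the substance of the result is already carried by Theorem~\ref{Theorem : Continuity} and the rigidity result of \cite{Nozawa} that underlie Corollary~\ref{Corollary : Invariance 2}, and Corollary~\ref{Corollary : Invariance 3} is merely the translation of that conclusion into the language of basic cohomology via the known dimension dichotomy.
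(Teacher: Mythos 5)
Your proposal is correct and is essentially the paper's own argument: the paper derives this corollary by combining the dichotomy $\dim H_{b}^{q}(M/\mathcal{F}^{t}) \in \{0,1\}$ of El Kacimi--Sergiescu--Hector with the equivalence between minimizability and nontriviality of $H_{b}^{q}$, and then reading off Corollary~\ref{Corollary : Invariance 2} case by case, exactly as you do. If anything, your write-up is slightly more careful than the paper's one-line paraphrase, since you make explicit that the non-minimizable case requires the converse direction (Masa's theorem in \'{A}lvarez L\'{o}pez's non-orientable form), which the paper invokes only implicitly at this point.
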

\noindent This corollary gives a partial positive answer to the following question asked by the author in the VIII International Colloquium on Differential Geometry at Santiago de Compostela (see \cite{Alvarez Lopez Garcia Rio}):
\begin{question}\label{Question : 2}
Is the basic cohomology of Riemannian foliations invariant under deformations?
\end{question}
The component of the basic cohomology of the degree equal to the codimension of Riemannian foliations is invariant under deformations if and only if the answer of Question~\ref{Question : Invariance} is true. We note that the answer of Question~\ref{Question : 2} is negative in degree lower than the codimension of Riemannian foliations. We have a simple counterexample as we present in Example~\ref{Example : Basic}.

\vspace{5pt}

\addtocontents{toc}{\protect\setcounter{tocdepth}{2}}
\noindent {\bf Acknowledgement.}\hspace{3pt}The author would like to express his deep gratitude to \'{E}tienne Ghys and Masayuki Asaoka for stimulative comments on Anosov flows. The author would like to express his deep gratitude to Jos\'{e} Ignacio Royo Prieto for pointing out a gap in the previous version of this paper. A part of the correction of this gap was established in the discussion with Jes\'{u}s Antonio \'{A}lvarez L\'{o}pez during the author's stay at the University of Santiago de Compostela in the spring of 2009. The author would like to express his deep gratitude to Jes\'{u}s Antonio \'{A}lvarez L\'{o}pez for his invitation, great hospitality and valuable discussion. The author would like to express his deep gratitude to Jes\'{u}s Antonio \'{A}lvarez L\'{o}pez also for his comments on this manuscript during the author's stay at Centre de Recerca Matem\`{a}tica in Barcelona in the summer of 2010. The author would like to express his deep gratitude to Centre de Recerca Matem\`{a}tica for great hospitality.

\section{Basic definitions}

\subsection{Families of foliations}

We use the terminology from Molino \cite{Molino}. We recall the definition of some basic terminology here to avoid confusion.

Let $(M,\mathcal{F})$ be a foliated manifold. By the integrability of $\mathcal{F}$, the Lie bracket on $C^{\infty}(TM)$ induces the Lie derivative with respect to vector fields tangent to the leaves
\begin{multline*}
C^{\infty} (T\mathcal{F}) \otimes C^{\infty} \left( \bigotimes^{r} (TM/T\mathcal{F}) \otimes \bigotimes^{s} (TM/T\mathcal{F})^{*} \right) \longrightarrow \\ C^{\infty} \left( \bigotimes^{r} (TM/T\mathcal{F}) \otimes \bigotimes^{s} (TM/T\mathcal{F})^{*} \right)
\end{multline*}
for all nonnegative integers $s$ and $r$.
\begin{definition}
\begin{enumerate}
\item An element $X$ of $C^{\infty}(TM/T\mathcal{F})$ is called a transverse field on $(M,\mathcal{F})$ if $L_{Y}X=0$ for every $Y$ in $C^{\infty}(T\mathcal{F})$. A vector field $Y$ on $M$ is called a basic vector field if $Y$ is mapped to a transverse field by the projection $C^{\infty}(TM) \longrightarrow C^{\infty}(TM/T\mathcal{F})$.
\item An element $g$ of $C^{\infty}(\bigotimes^{2} (TM/T\mathcal{F})^{*})$ is called a transverse metric if the following three conditions are satisfied:
\begin{enumerate}
\item $g(Y,Z)=g(Z,Y)$ for every $Y$ and $Z$ in $C^{\infty}(TM/T\mathcal{F})$,
\item $L_{X}g=0$ for every $X$ in $C^{\infty}(T\mathcal{F})$ and
\item $g_{x}(Z,Z) > 0$ for every point $x$ on $M$ for every nonzero vector $Z$ in $T_{x}M/T_{x}\mathcal{F}$.
\end{enumerate}
A Riemannian metric $g$ on $M$ is called a bundle-like metric if the restriction of $g$ to $\bigotimes^{2} (T\mathcal{F})^{\perp}$ is a transverse metric under the natural identification of $\bigotimes^{2} (T\mathcal{F})^{\perp}$ with $\bigotimes^{2}(TM/T\mathcal{F})$.
\item Let $q$ be the codimension of $(M,\mathcal{F})$. A transversal parallelism of $(M,\mathcal{F})$ is a $q$-tuple of transverse fields $X^{1}$, $X^{2}$, $\cdots$, $X^{q}$ on $(M,\mathcal{F})$ such that $\{(X^{1})_{x}, (X^{2})_{x}, \cdots, (X^{q})_{x}\}$ is a basis of $T_{x}M/T_{x}\mathcal{F}$ at each point $x$ on $M$.
\end{enumerate}
\end{definition}

We recall the definition of smooth families of foliations with transverse structures. Let $U$ be an open set in $\mathbb{R}^{\ell}$ which contains $0$. Let $M$ be a smooth manifold.
\begin{definition}\label{definition: families of general foliations}
A smooth family of $p$-dimensional foliations of $M$ over $U$ is defined by a $p$-dimensional smooth foliation $\mathcal{F}^{\amb}$ of $M \times U$ such that every leaf of $\mathcal{F}^{\amb}$ is contained in $M \times \{t\}$ for some $t$.
\end{definition}

For $t$ in $U$, let $\mathcal{F}^{t}$ be the foliation of $M \times \{t\}$ defined by the collection of the leaves of $\mathcal{F}^{\amb}$ contained in $M \times \{t\}$. Families of foliations are written as $\{\mathcal{F}^{t}\}_{t \in U}$ throughout this paper. $(\nu\mathcal{F})^{\amb}$ denotes the vector bundle over $M \times U$ defined by the kernel of the map $T(M \times U)/T\mathcal{F}^{\amb} \longrightarrow TU$ induced by the differential map of the second projection $M \times U \longrightarrow U$. We call $(\nu\mathcal{F})^{\amb}$ the family of normal bundles of $\{\mathcal{F}^{t}\}_{t \in U}$. Note that $(\nu\mathcal{F})^{\amb}|_{M \times \{t\}}$ is the normal bundle of the foliation $\mathcal{F}^{t}$ of $M \times \{t\}$ for each $t$.

\begin{definition}\label{definition: families of foliations}
\begin{enumerate}
\item A smooth family of Riemannian foliations of $M$ over $U$ is a pair consisting of a smooth family of foliation of $M \times U$ defined by $\mathcal{F}^{\amb}$ and a smooth metric $g^{\amb}$ on $(\nu\mathcal{F})^{\amb}$ such that the restriction of $g^{\amb}$ to the orthogonal normal bundle of $(M \times \{t\}, \mathcal{F}^{t})$ is a transverse metric on $(M \times \{t\}, \mathcal{F}^{t})$.
\item A smooth family of transversely parallelizable foliations of codimension $q$ of $M$ over $U$ is a pair of a smooth family $\{\mathcal{F}^{t}\}_{t \in U}$ of foliations of $M$ of codimension $q$ and a $q$-tuple of global sections $X^{1}_{\amb}$, $X^{2}_{\amb}$, $\cdots$, $X^{q}_{\amb}$ of $(\nu\mathcal{F})^{\amb}$ such that $\{X^{1}_{\amb}|_{M \times \{t\}}, X^{2}_{\amb}|_{M \times \{t\}}, \cdots, X^{q}_{\amb}|_{M \times \{t\}}\}$ is a transverse parallelism of $(M \times \{t\}, \mathcal{F}^{t})$ for each $t$.
\end{enumerate}
\end{definition}

\subsection{The \'{A}lvarez class}

We recall the definition of the \'{A}lvarez class of a closed manifold with a Riemannian foliation by \'{A}lvarez L\'{o}pez \cite{Alvarez Lopez}. We restrict ourselves to the case of oriented manifolds. The definition in the non-orientable case is done by lifting the foliation to the orientation cover as in \cite{Alvarez Lopez}.

Let $(M,\mathcal{F})$ be an oriented closed manifold with a Riemannian foliation. We fix a bundle-like metric $g$ on $(M,\mathcal{F})$. We have a direct sum decomposition
\begin{equation}\label{Equation : Orthogonal decomposition}
C^{\infty}(\wedge^{k} T^{*}M) = C^{\infty}_{b}(\wedge^{k} T^{*}M) \oplus C^{\infty}_{b}(\wedge^{k} T^{*}M)^{\perp}
\end{equation}
with respect to the metric induced by $g$, where $C^{\infty}_{b}(\wedge^{k} T^{*}M)$ is the space of basic $k$-forms on $(M,\mathcal{F})$. Let $\rho_{\mathcal{F}}$ be the first projection
\begin{equation}
\rho_{\mathcal{F}} \colon C^{\infty}(\wedge^{k} T^{*}M) \longrightarrow C^{\infty}_{b}(\wedge^{k} T^{*}M).
\end{equation}
We denote the mean curvature form of $(M,\mathcal{F},g)$ by $\kappa$ (see, for example, Section~10.5 of Candel and Conlon \cite{Candel Conlon} for the definition of the mean curvature form of $(M,\mathcal{F},g)$).
\begin{definition}\label{Definition : Alvarez class}
For an oriented closed manifold $M$ with a Riemannian foliation $\mathcal{F}$, we define a basic $1$-form $\kappa_{b}$ on $(M,\mathcal{F})$ by
\begin{equation}\label{Equation : Definition of kappa b}
\kappa_{b}=\rho_{\mathcal{F}}(\kappa)
\end{equation}
and call $\kappa_{b}$ the \'{A}lvarez form of $(M,\mathcal{F})$. This $\kappa_{b}$ is closed by Corollary 3.5 of \'{A}lvarez L\'{o}pez \cite{Alvarez Lopez}. We define the \'{A}lvarez class of $(M,\mathcal{F})$ by the cohomology class of $\kappa_{b}$ in $H^{1}(M;\mathbb{R})$. We denote the \'{A}lvarez class of $(M,\mathcal{F})$ by $\xi(\mathcal{F})$.
\end{definition}
Let $H^{1}_{b}(M/\mathcal{F})$ be the basic cohomology group of degree $1$ of $(M,\mathcal{F})$ (see Section~2 of Reinhart \cite{Reinhart} or Section~2.3 of Molino \cite{Molino} for the definition of the basic cohomology). \'{A}lvarez L\'{o}pez defined the \'{A}lvarez class as an element of $H^{1}_{b}(M/\mathcal{F})$ in \cite{Alvarez Lopez}. Since the canonical map $H^{1}_{b}(M/\mathcal{F}) \longrightarrow H^{1}(M/\mathcal{F})$ is injective as easily confirmed by definition, Definition~\ref{Definition : Alvarez class} gives the essentially same data as in \cite{Alvarez Lopez}.

The simple proof of the following lemma is due to a comment of \'{A}lvarez L\'{o}pez to the author:
\begin{lemma}\label{Lemma : Finite Covering}
Let $(M_{1},\mathcal{F}_{1})$ be a closed manifold with a Riemannian foliation. Let $p \colon M_{2} \longrightarrow M_{1}$ be a finite covering, and let $\mathcal{F}_{2}=p^{*}\mathcal{F}_{1}$, which is a Riemannian foliation on $M_{2}$. Then we have $\xi(\mathcal{F}_{2})=p^{*}\xi(\mathcal{F}_{1})$.
\end{lemma}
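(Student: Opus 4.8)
The plan is to exploit the naturality of every ingredient in Definition~\ref{Definition : Alvarez class} under the local isometry $p$. Since $p$ is a finite covering, it is a local diffeomorphism sending the leaves of $\mathcal{F}_{2}$ to the leaves of $\mathcal{F}_{1}$. First I would fix a bundle-like metric $g_{1}$ on $(M_{1},\mathcal{F}_{1})$ and equip $M_{2}$ with the pullback $g_{2}=p^{*}g_{1}$; this $g_{2}$ is again bundle-like for $(M_{2},\mathcal{F}_{2})$ because the defining conditions of a transverse metric are local and $p$ is a local isometry. For the same reason the mean curvature form is natural, $\kappa_{2}=p^{*}\kappa_{1}$, where $\kappa_{i}$ denotes the mean curvature form of $(M_{i},\mathcal{F}_{i},g_{i})$. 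Computing $\xi(\mathcal{F}_{1})$ with $g_{1}$ and $\xi(\mathcal{F}_{2})$ with $g_{2}$, it then suffices to compare the \'{A}lvarez forms $\kappa_{1,b}$ and $\kappa_{2,b}$.

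The crux is to show that the basic projection commutes with $p^{*}$ on $1$-forms, i.e. $\rho_{\mathcal{F}_{2}}(p^{*}\kappa_{1})=p^{*}\rho_{\mathcal{F}_{1}}(\kappa_{1})$, which together with $\kappa_{2}=p^{*}\kappa_{1}$ yields $\kappa_{2,b}=p^{*}\kappa_{1,b}$ and hence the lemma. Writing $\kappa_{1}=\kappa_{1,b}+\kappa_{1}^{\perp}$ according to \eqref{Equation : Orthogonal decomposition} on $M_{1}$, the pullback $p^{*}\kappa_{1,b}$ is again basic (the pullback of a basic form under a foliated map is basic), so $\rho_{\mathcal{F}_{2}}(p^{*}\kappa_{1,b})=p^{*}\kappa_{1,b}$, and it remains to prove $p^{*}\kappa_{1}^{\perp}\in C^{\infty}_{b}(\wedge^{1}T^{*}M_{2})^{\perp}$, that is, that $p^{*}\kappa_{1}^{\perp}$ is $L^{2}$-orthogonal to every basic $1$-form on $M_{2}$.

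I expect this orthogonality to be the main obstacle, since the space of basic $1$-forms on $M_{2}$ is in general strictly larger than $p^{*}$ of the basic $1$-forms on $M_{1}$, so $p^{*}$ does not obviously respect the two orthogonal decompositions. I would establish it first for a normal covering with finite deck group $\Gamma$, where $\Gamma$ acts on $(M_{2},\mathcal{F}_{2},g_{2})$ by foliated isometries and $p^{*}\kappa_{1}^{\perp}$ is $\Gamma$-invariant. Given any basic $1$-form $\phi$ on $M_{2}$, the $\Gamma$-invariance of $p^{*}\kappa_{1}^{\perp}$ and of the metric give $\langle p^{*}\kappa_{1}^{\perp},\phi\rangle_{M_{2}}=\langle p^{*}\kappa_{1}^{\perp},\gamma^{*}\phi\rangle_{M_{2}}$ for each $\gamma\in\Gamma$, using $p\circ\gamma=p$; averaging over $\Gamma$ replaces $\phi$ by its average $\phi^{\Gamma}=|\Gamma|^{-1}\sum_{\gamma\in\Gamma}\gamma^{*}\phi$ without changing the inner product. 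The average $\phi^{\Gamma}$ is a $\Gamma$-invariant basic form, hence descends to a basic form $\bar{\phi}$ on $M_{1}$ with $p^{*}\bar{\phi}=\phi^{\Gamma}$; since $p$ is a local isometry, $\int_{M_{2}}p^{*}(\,\cdot\,)=(\deg p)\int_{M_{1}}(\,\cdot\,)$ gives $\langle p^{*}\kappa_{1}^{\perp},\phi^{\Gamma}\rangle_{M_{2}}=(\deg p)\,\langle\kappa_{1}^{\perp},\bar{\phi}\rangle_{M_{1}}=0$, because $\kappa_{1}^{\perp}$ is orthogonal to all basic forms on $M_{1}$. This proves $\kappa_{2,b}=p^{*}\kappa_{1,b}$, hence $\xi(\mathcal{F}_{2})=p^{*}\xi(\mathcal{F}_{1})$, in the normal case.

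For an arbitrary finite covering $p$ I would reduce to the normal case: choose a finite normal covering $q\colon M_{3}\to M_{1}$ factoring as $M_{3}\xrightarrow{\,r\,}M_{2}\xrightarrow{\,p\,}M_{1}$ with $q=p\circ r$, where $r$ is also normal (take the covering associated with the normal core of the finite-index subgroup defining $M_{2}$). Setting $\mathcal{F}_{3}=q^{*}\mathcal{F}_{1}=r^{*}\mathcal{F}_{2}$ and applying the normal case to $q$ and to $r$ gives $q^{*}\xi(\mathcal{F}_{1})=\xi(\mathcal{F}_{3})=r^{*}\xi(\mathcal{F}_{2})$, so $r^{*}\bigl(p^{*}\xi(\mathcal{F}_{1})\bigr)=r^{*}\xi(\mathcal{F}_{2})$. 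Finally, since $r$ is a finite covering the transfer map shows that $r^{*}\colon H^{1}(M_{2};\mathbb{R})\to H^{1}(M_{3};\mathbb{R})$ is injective, and I conclude $\xi(\mathcal{F}_{2})=p^{*}\xi(\mathcal{F}_{1})$.
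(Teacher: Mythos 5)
Your proof is correct, and its skeleton coincides with the paper's: fix a bundle-like metric $g_{1}$, endow $M_{2}$ with $p^{*}g_{1}$, use the naturality $\kappa_{2}=p^{*}\kappa_{1}$ of the mean curvature form, and reduce the lemma to the commutation $p^{*}\rho_{\mathcal{F}_{1}}=\rho_{\mathcal{F}_{2}}p^{*}$ of the basic projections with the pullback. Where you genuinely differ is in how this commutation is established. The paper asserts the compatibility of the two orthogonal decompositions with $p^{*}$ --- in particular that $p^{*}$ carries $\big(\Omega^{1}_{b}(M_{1}/\mathcal{F}_{1})\big)^{\perp}$ into $\big(\Omega^{1}_{b}(M_{2}/\mathcal{F}_{2})\big)^{\perp}$ --- as following ``by definition of metrics''; the natural justification of that terse claim is the transfer of forms: for a basic $1$-form $\phi$ on $M_{2}$, the pushforward $p_{*}\phi$ (the sum of $\phi$ over the local branches of $p^{-1}$) is again basic on $M_{1}$, and $\langle p^{*}\alpha,\phi\rangle_{M_{2}}=\langle \alpha,p_{*}\phi\rangle_{M_{1}}$, which vanishes when $\alpha\perp\Omega^{1}_{b}(M_{1}/\mathcal{F}_{1})$. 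You prove the same orthogonality instead by averaging over the deck group, which is only available when the covering is normal, and you then recover the general case by factoring through the normal core $M_{3}\to M_{2}\to M_{1}$ and invoking injectivity of $r^{*}$ on $H^{1}(\cdot\,;\mathbb{R})$ via the cohomological transfer. Every step of this is sound: the normal core has finite index, $r$ is again a normal covering, the average $\phi^{\Gamma}$ is basic and descends to a basic form on $M_{1}$, and the degree formula gives the vanishing of the pairing. The trade-off is that your argument is fully detailed exactly where the paper is most terse, but at the cost of a two-stage reduction; if you replace the deck-group average by the fiberwise sum $p_{*}\phi$ (the same averaging, but indexed by the fiber of $p$ rather than by a group), the orthogonality holds for an arbitrary finite covering in one stroke and the normal-core detour, together with the injectivity argument for $r^{*}$, can be deleted.
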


\begin{proof}
We take a bundle-like metric $g_{1}$ on $(M_{1},\mathcal{F}_{1})$. Then $p^{*}g_{1}$ is a bundle-like metric on $(M_{1},\mathcal{F}_{1})$. We consider orthogonal decompositions
\begin{align}
\Omega^{1}(M_{1}) & = \Omega^{1}_{b}(M_{1}/\mathcal{F}_{1}) \oplus \big(\Omega^{1}_{b}(M_{1}/\mathcal{F}_{1})\big)^{\perp}, \label{Equation : decomposition1} \\
\Omega^{1}(M_{2}) & = \Omega^{1}_{b}(M_{2}/\mathcal{F}_{2}) \oplus \big(\Omega^{1}_{b}(M_{2}/\mathcal{F}_{2})\big)^{\perp} \label{Equation : decomposition2}
\end{align}
\noindent with respect to the metric induced by $g_{1}$ and $p^{*}g_{1}$, respectively. By definition of metrics, we have 
\begin{align}
p^{*}\Omega^{1}_{b}(M_{1}/\mathcal{F}_{1}) & = p^{*}\Omega^{1}(M_{1}) \cap \Omega^{1}_{b}(M_{2}/\mathcal{F}_{2}), \\
p^{*}\big(\Omega^{1}_{b}(M_{1}/\mathcal{F}_{1})\big)^{\perp} & = p^{*}\Omega^{1}(M_{1}) \cap \big(\Omega^{1}_{b}(M_{2}/\mathcal{F}_{2})\big)^{\perp}.
\end{align}
\noindent Let $\rho_{\mathcal{F}_{1}}$ and $\rho_{\mathcal{F}_{2}}$ be the first projections on decompositions \eqref{Equation : decomposition1} and \eqref{Equation : decomposition2}, respectively. These equalities imply 
\begin{equation}\label{Equation : commute}
p^{*}\rho_{\mathcal{F}_{1}} = \rho_{\mathcal{F}_{2}}p^{*}.
\end{equation}
Let $\kappa_{i}$ be the mean curvature forms of $(M_{i},\mathcal{F}_{i})$ with respect to $g_{1}$ and $p^{*}g_{1}$, respectively. We get $\kappa_{2}=p^{*}\kappa_{1}$. By \eqref{Equation : commute}, we get
\begin{equation*}
p^{*}(\kappa_{1})_{b}=p^{*}\rho_{\mathcal{F}_{1}}(\kappa_{1}) = \rho_{\mathcal{F}_{2}}(p^{*}\kappa_{1}) =  \rho_{\mathcal{F}_{2}}(\kappa_{2})=(\kappa_{2})_{b}.  \qed 
\end{equation*}
\renewcommand{\qed}{} \end{proof}

\section{Fundamentals of Lie foliation theory}\label{Section : Lie foliations}

We summarize the fundamental facts of Lie foliation theory due to Fedida \cite{Fedida} and \cite{Fedida 2} (see also Section 4.2 of Molino \cite{Molino} or Section 4.3.1 of Moerdijk and Mr\v{c}un \cite{Moerdijk Mrcun}) to use in Sections~\ref{Section : Representative} and \ref{Section : Continuity}.

Let $G$ be a connected Lie group. Recall that a $G$-Lie foliation is a foliation with a transverse $(G,G)$-structure, where $G$ acts on $G$ by the right multiplication. A $G$-Lie foliation has a structure of $G'$-Lie foliation for any covering group $G'$ of $G$ as easily confirmed. Thus we will assume the simply connectedness of $G$ throughout this paper. We recall the following
\begin{definition}\label{Definition : Structural Lie algebra}
The Lie algebra of $G$ is called the structural Lie algebra of the Lie foliation.
\end{definition}
Let $(M,\mathcal{F})$ be a $G$-Lie foliation. Let $p_{M}^{\univ} \colon M^{\univ} \longrightarrow M$ be the universal cover of $M$. Fix a point $x_{0}^{\univ}$ in $M^{\univ}$, and let $x_{0}=p_{M}^{\univ}(x_{0}^{\univ})$.
\begin{description}
\item[(I)] We have a fiber bundle $\dev \colon M^{\univ} \longrightarrow G$ which maps $x_{0}^{\univ}$ to the unit element $e$ of $G$ and whose fibers are the leaves of the foliation $(p_{M}^{\univ})^{*}\mathcal{F}$ of $M^{\univ}$. This $\dev$ is called the developing map of $(M,\mathcal{F})$.
\item[(II)] We have a group homomorphism $\hol \colon \pi_{1}(M,x_{0}) \longrightarrow G$ such that
\begin{equation}\label{Equation : dev and hol}
\dev(\gamma \cdot x) = \dev(x) \cdot_{G}  \hol(\gamma)
\end{equation}
for $x$ in $M^{\univ}$ and $\gamma$ in $\pi_{1}(M,x_{0})$, where $\cdot_{G}$ is the multiplication of $G$. The image of $\hol$ is called the holonomy group of $(M,\mathcal{F})$. The holonomy group of $(M,\mathcal{F})$ is dense in $G$ if and only if the leaves of $\mathcal{F}$ are dense in $M$.
\item[(III)] Recall that the Maurer-Cartan form $\theta$ on $G$ is a $\Lie(G)$-valued $1$-form on $G$ defined by $\theta_{g}(v)=(R_{g})_{*}v$ for $g$ in $G$ and $v$ in $T_{g}G$, where $R_{g}$ is the right multiplication map by $g$. Since a $\Lie(G)$-valued $1$-form $\dev^{*}\theta$ on $M^{\univ}$ is invariant under the $\pi_{1}(M,x_{0})$-action, $\dev^{*}\theta$ induces a $\Lie(G)$-valued $1$-form $\Omega$ on $M$. The structure of a $G$-Lie foliation $(M,\mathcal{F})$ is determined by this $\Lie(G)$-valued $1$-form $\Omega$ on $M$. This $\Omega$ is called the Maurer-Cartan form of a $G$-Lie foliation $(M,\mathcal{F})$.
\item[(IV)] The Maurer-Cartan form $\Omega$ of a $G$-Lie foliation $(M,\mathcal{F})$ satisfies the equation 
\begin{equation}\label{Equation : Maurer-Cartan}
d\Omega + \frac{1}{2}[\Omega,\Omega]=0.
\end{equation}
Conversely, if a $\Lie(G)$-valued $1$-form $\Omega$ on $M$ satisfies \eqref{Equation : Maurer-Cartan} and $\Omega_{x} \colon T_{x}M \longrightarrow \Lie(G)$ is surjective for every $x$ in $M$, then $\Omega$ is the Maurer-Cartan form of a $G$-Lie foliation of $M$.
\item[(V)] Let $q = \cod (M,\mathcal{F})$. Let $\{\overline{X}^{j}\}_{j = 1}^{q}$ be a basis of $\Lie(G)$. Let $\{\overline{\omega}_{i}\}_{i = 1}^{q}$ be the dual basis of $\Lie(G)^{*}$. Let $\omega_{i}$ be the $1$-form on $M$ induced from a $\pi_{1}(M,x_{0})$-invariant $1$-form $\dev^{*}\overline{\omega}_{i}$ on $M^{\univ}$. Let $X^{j}$ be the vector field on $M$ orthogonal to $T\mathcal{F}$ such that $\omega_{i}(X^{j})=\delta_{ij}$ for each $i$ and $j$, where $\delta_{ij}$ is the Kronecker's delta. Then $\{X^{j}\}_{j = 1}^{q}$ is clearly a transverse parallelism of $(M,\mathcal{F})$. Here, the Maurer-Cartan form $\Omega$ of $(M,\mathcal{F})$ is given by the equation $\Omega_{x}((X^{j})_{x})=\overline{X}^{j}$ for each point $x$ on $M$.
\end{description}

\section{Reduction to the orientable transversely parallelizable case}

We reduce the proof of Theorem~\ref{Theorem : Continuity} to the case where $\{\mathcal{F}^{t}\}_{t \in U}$ is a family of transversely parallelizable foliations.

Let $\{\mathcal{F}^{t}\}_{t \in U}$ be a smooth family of Riemannian foliations of codimension $q$ of a closed manifold $M$ over $U$. Clearly we can assume that $U$ is contractible without loss of generality. Let $\Fr(\nu\mathcal{F})^{\amb}$ be the family of the frame bundles associated with the family $(\nu\mathcal{F})^{\amb}$ of vector bundles on $M$. The metric $g^{\amb}$ on $(\nu\mathcal{F})^{\amb}$ determines an $O(q)$-reduction $O(\nu\mathcal{F})^{\amb}$ of $\Fr(\nu\mathcal{F})^{\amb}$. We denote $O(\nu\mathcal{F})^{\amb}|_{M \times \{0\}}$ by $O(\nu\mathcal{F})^{0}$. Since $O(\nu\mathcal{F})^{\amb}$ is the total space of a $O(\nu\mathcal{F})^{0}$-bundle over a contractible base space $U$, we can trivialize $O(\nu\mathcal{F})^{\amb}$ as $O(\nu\mathcal{F})^{\amb} \cong O(\nu\mathcal{F})^{0} \times U$. By the standard construction of the Molino theory, we have the following lemma:

\begin{lemma}
There exists a foliation $\mathcal{G}^{\amb}$ of $O(\nu\mathcal{F})^{0} \times U$ and a $\frac{q(q+1)}{2}$-tuple of transverse fields of $(O(\nu\mathcal{F})^{0} \times U, \mathcal{G}^{\amb})$ defining a smooth family $\{\mathcal{G}^{t}\}_{t \in U}$ of transversely parallelizable foliations of codimension $\frac{q(q+1)}{2}$ of $O(\nu\mathcal{F})^{0}$ over $U$. 
\end{lemma}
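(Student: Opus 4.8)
The plan is to carry out the standard Molino construction --- lifting a Riemannian foliation to its transverse orthonormal frame bundle to produce a transversely parallelizable foliation --- fiberwise over $U$, and to verify that every object produced depends smoothly on the parameter $t$. I would work on $O(\nu\mathcal{F})^{\amb}$ and then transport the result through the trivialization $O(\nu\mathcal{F})^{\amb} \cong O(\nu\mathcal{F})^0 \times U$, chosen to be $O(q)$-equivariant (possible since the principal $O(q)$-bundle $O(\nu\mathcal{F})^{\amb} \to M \times U$ is isomorphic over the contractible base $U$ to the pullback of $O(\nu\mathcal{F})^0 \to M$). Under this trivialization the slice $O(\nu\mathcal{F})^{\amb}|_{M \times \{t\}} = O(\nu\mathcal{F})^t$ corresponds to $O(\nu\mathcal{F})^0 \times \{t\}$, so the required containment of leaves in slices becomes the geometric statement that the leaves of the lifted foliation project into the slices $M \times \{t\}$.

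First I would produce the family of transverse connections. The foliation $\mathcal{F}^{\amb}$ defines a Bott partial connection on $(\nu\mathcal{F})^{\amb}$ along the leaves, given by $\nabla_X s = \pr([X,\tilde{s}])$ for $X$ tangent to $\mathcal{F}^{\amb}$; since $g^{\amb}$ restricts to a holonomy-invariant transverse metric on each slice (condition (b) of a transverse metric), this partial connection is metric. For each $t$ the transverse metric of $(M,\mathcal{F}^t)$ determines its transverse Levi-Civita connection $\nabla^t$ on $(\nu\mathcal{F})^t$ in the sense of Molino; because $\nabla^t$ is expressed by a Koszul-type formula in $g^{\amb}$ and its first derivatives, the family $\{\nabla^t\}_{t \in U}$ is smooth, and the $\nabla^t$ assemble into a smooth $O(q)$-connection on $O(\nu\mathcal{F})^{\amb}$ whose horizontal distribution $H^{\amb}$ consists of slice-tangent vectors.

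Next I would define the lifted foliation by letting $T\mathcal{G}^{\amb}$ be the horizontal lift of $T\mathcal{F}^{\amb}$ along the leaves (equivalently, the restriction of $H^{\amb}$ over $T\mathcal{F}^{\amb}$, the two agreeing since $H^{\amb}$ restricts to the Bott connection along the leaves). Since $T\mathcal{F}^{\amb}$ and $H^{\amb}$ are both slice-tangent, so is $T\mathcal{G}^{\amb}$; on each slice this is exactly the Molino lift of $\mathcal{F}^t$, an integrable distribution of the same dimension as $\mathcal{F}^t$ whose leaves cover the leaves of $\mathcal{F}^t$. Smoothness of $T\mathcal{G}^{\amb}$ in $t$ follows from that of the Bott connection and of $g^{\amb}$, and integrability slice-by-slice together with slice-containment shows that $\mathcal{G}^{\amb}$ is a smooth foliation of $O(\nu\mathcal{F})^0 \times U$ with every leaf inside a slice, hence a smooth family $\{\mathcal{G}^t\}_{t \in U}$; a dimension count gives $\cod(O(\nu\mathcal{F})^0, \mathcal{G}^t) = q + \dim O(q) = \frac{q(q+1)}{2}$.

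Finally I would construct the transverse parallelism. The fundamental vector fields $A^{*}$, $A \in \mathfrak{o}(q)$, of the $O(q)$-action on $O(\nu\mathcal{F})^{\amb}$ give $\frac{q(q-1)}{2}$ global, smooth, slice-tangent fields, and the family transverse Levi-Civita connection together with the tautological $\mathbb{R}^q$-valued transverse form yields the $q$ standard horizontal fields $B^{\amb}(\xi)$; all of these are foliate for $\mathcal{G}^{\amb}$ and slice-tangent, so they descend to smooth sections of $(\nu\mathcal{G})^{\amb}$. On each slice their classes are precisely the Molino transverse parallelism of $\mathcal{G}^t$, so the $\frac{q(q-1)}{2} + q = \frac{q(q+1)}{2}$ sections restrict to a transverse parallelism of $(O(\nu\mathcal{F})^0, \mathcal{G}^t)$ for every $t$, which is exactly the data of a smooth family of transversely parallelizable foliations. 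I expect the main work to be not the construction, which is Molino's, but the verification that $\nabla^t$, the distribution $T\mathcal{G}^{\amb}$, and the fields $B^{\amb}(\xi)$ vary smoothly in $t$; the Koszul formula and the smoothness of $g^{\amb}$ on $(\nu\mathcal{F})^{\amb}$ are the key inputs, with the $O(q)$-equivariance of the trivialization ensuring that the fundamental fields are globally defined on the product.
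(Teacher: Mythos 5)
Your proposal is correct, and it follows the same overall strategy as the paper: carry out Molino's lifting construction fiberwise over $U$ and check that everything depends smoothly on $t$, with the transverse parallelism given by the fundamental vector fields of the $O(q)$-action together with the standard horizontal fields coming from the transverse Levi-Civita connection and the canonical $1$-form. The one place where your implementation genuinely differs is in how the lifted foliation $\mathcal{G}^{\amb}$ is produced. The paper takes a Haefliger cocycle $\{(V_{\lambda},\phi_{\lambda})\}$ defining $\mathcal{F}^{\amb}$ and lifts it to the cocycle $\{(O(\nu\mathcal{F})^{\amb}|_{V_{\lambda}},d\phi_{\lambda})\}$ on the frame bundle; this makes the existence of $\mathcal{G}^{\amb}$ as a smooth foliation of the total space, and the containment of its leaves in slices, completely automatic, since a cocycle for the family lifts to a cocycle for the lifted family. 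You instead define $T\mathcal{G}^{\amb}$ intrinsically as the Bott-horizontal lift of $T\mathcal{F}^{\amb}$ (equivalently, the restriction of the horizontal distribution of the family of transverse Levi-Civita connections to leafwise directions), which forces you to verify separately that the distribution is smooth in $t$, slice-tangent, and integrable; your verifications of these points (smoothness via the Koszul formula, integrability slice-by-slice plus slice-tangency, and the identification of the transverse Levi-Civita connection with the Bott connection along leaves via torsion-freeness) are all sound, so the extra work is real but correctly handled. What your route buys is an explicit, coordinate-free description of $T\mathcal{G}^{\amb}$ and of the $O(q)$-equivariance needed for the fundamental fields; what the paper's route buys is brevity, since the cocycle lift hides exactly the verifications you had to make by hand.
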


\begin{proof}
Let $\{(V_{\lambda},\phi_{\lambda})\}$ be a Haefliger cocycle defining a foliation $\mathcal{F}^{\amb}$ of $M \times U$. Then $\{(O(\nu\mathcal{F})^{\amb}|_{V_{\lambda}},d\phi_{\lambda})\}$ is a Haefliger cocycle on $O(\nu\mathcal{F})^{\amb}$, where $d\phi_{\lambda}$ is the map induced on the frame bundle by $\phi_{\lambda}$. We define a foliation $\mathcal{G}^{\amb}$ of $O(\nu\mathcal{F})^{0} \times U$ pushing out the foliation of $O(\nu\mathcal{F})^{\amb}$ defined by the Haefliger cocycle $\{(O(\nu\mathcal{F})^{\amb}|_{V_{\lambda}},d\phi_{\lambda})\}$ by the trivialization $O(\nu\mathcal{F})^{\amb} \cong O(\nu\mathcal{F})^{0} \times U$. 

Let $\mathcal{G}^{t}=\mathcal{G}^{\amb}|_{O(\nu\mathcal{F})^{0} \times \{t\}}$. For each $t$, we can construct a transversely parallelism of $(O(\nu\mathcal{F})^{0} \times \{t\}, \mathcal{G}^{t})$ from the transverse Levi-Civita connection on $O(\nu\mathcal{F})^{\amb}|_{M \times \{t\}}$ and the canonical $1$-form on the frame bundle $O(\nu\mathcal{F})^{\amb}|_{M \times \{t\}}$ as in Section 5.1 of Molino \cite{Molino} or Theorem 4.20 of Moerdijk and Mr\v{c}un \cite{Moerdijk Mrcun}. Since the transverse Levi-Civita connections and the canonical $1$-forms on $(O(\nu\mathcal{F})^{0} \times \{t\}, \mathcal{G}^{t})$ are smooth with respect to the parameter $t$, we have a smooth family of transverse parallelisms.
\end{proof}

\begin{lemma}\label{Lemma : Reduction to TP case}
If $\xi(\mathcal{G}^{t})$ is continuous with respect to $t$, then $\xi(\mathcal{F}^{t})$ is also continuous with respect to $t$.
\end{lemma}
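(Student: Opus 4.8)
The plan is to deduce the continuity of $\xi(\mathcal{F}^{t})$ from that of $\xi(\mathcal{G}^{t})$ via the pullback relation between the two \'{A}lvarez classes. Write $p \colon O(\nu\mathcal{F})^{0} \to M$ for the projection of the principal $O(q)$-bundle underlying the construction of the foregoing lemma, and note that this bundle, hence $p$ and the induced map $p^{*} \colon H^{1}(M;\mathbb{R}) \to H^{1}(O(\nu\mathcal{F})^{0};\mathbb{R})$, are independent of $t$. By construction $\mathcal{G}^{t}$ is the lift of $\mathcal{F}^{t}$ to the transverse orthonormal frame bundle along $p$. The two facts I would establish are
\begin{equation*}
\xi(\mathcal{G}^{t}) = p^{*}\xi(\mathcal{F}^{t}) \quad \text{for every } t,
\end{equation*}
together with the injectivity of $p^{*}$ on $H^{1}(\,\cdot\,;\mathbb{R})$. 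Granting these, the lemma is immediate: as a fixed injective linear map of finite-dimensional real vector spaces, $p^{*}$ admits a linear, hence continuous, left inverse $L$, and then $\xi(\mathcal{F}^{t}) = L\big(\xi(\mathcal{G}^{t})\big)$ is continuous in $t$ whenever $\xi(\mathcal{G}^{t})$ is.

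For the pullback formula I would argue as in Lemma~\ref{Lemma : Finite Covering}, correcting for the positive-dimensional fibers of $p$. Since the \'{A}lvarez class does not depend on the chosen bundle-like metric, I may fix a bundle-like metric on $(M,\mathcal{F}^{t})$ and give $O(\nu\mathcal{F})^{0}$ the adapted bundle-like metric for which $p$ is a Riemannian submersion, with horizontal distribution the transverse Levi-Civita connection and a fixed bi-invariant metric along the $O(q)$-fibers. For this metric the fibers are totally geodesic and every leaf of $\mathcal{G}^{t}$ is a horizontal lift carried isometrically onto a leaf of $\mathcal{F}^{t}$, so the O'Neill formulas give that the mean curvature form of $\mathcal{G}^{t}$ equals $p^{*}\kappa$, where $\kappa$ is the mean curvature form of $\mathcal{F}^{t}$. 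It then remains to check that $p^{*}$ intertwines the orthogonal projections of \eqref{Equation : Orthogonal decomposition}, that is $\rho_{\mathcal{G}^{t}}\circ p^{*} = p^{*}\circ \rho_{\mathcal{F}^{t}}$ on pulled-back $1$-forms. That $p^{*}$ sends basic forms to basic forms is clear; for the orthogonal complement one takes $\beta \in C^{\infty}_{b}(\wedge^{1} T^{*}M)^{\perp}$ and a basic $1$-form $\gamma$ on $O(\nu\mathcal{F})^{0}$, averages $\gamma$ over $O(q)$, discards its fiberwise vertical part (which is $L^{2}$-orthogonal to the horizontal form $p^{*}\beta$), and observes that its $O(q)$-invariant horizontal part descends to a basic $1$-form on $M$, against which $\beta$ integrates to zero, so that $p^{*}\beta \perp \gamma$. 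This gives $(\kappa_{\mathcal{G}^{t}})_{b} = p^{*}(\kappa_{\mathcal{F}^{t}})_{b}$ and hence $\xi(\mathcal{G}^{t}) = p^{*}\xi(\mathcal{F}^{t})$.

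For the injectivity of $p^{*}$ I would pass to the orientation double cover of the transverse bundle: the quotient $\widetilde{M} = O(\nu\mathcal{F})^{0}/\SO(q)$ is a double cover $\pi \colon \widetilde{M} \to M$, and $O(\nu\mathcal{F})^{0} \to \widetilde{M}$ is a principal $\SO(q)$-bundle with connected fiber. The five-term exact sequence of the latter fibration shows that $H^{1}(\widetilde{M};\mathbb{R}) \to H^{1}(O(\nu\mathcal{F})^{0};\mathbb{R})$ is injective, while the transfer homomorphism of the double cover shows that $\pi^{*} \colon H^{1}(M;\mathbb{R}) \to H^{1}(\widetilde{M};\mathbb{R})$ is injective; composing, $p^{*}$ is injective.

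The main obstacle is the pullback formula, and within it the intertwining $\rho_{\mathcal{G}^{t}}\circ p^{*} = p^{*}\circ \rho_{\mathcal{F}^{t}}$: unlike the finite-covering situation of Lemma~\ref{Lemma : Finite Covering}, $p$ is not a local diffeomorphism, so one cannot compare mean curvature forms pointwise, and the space of basic $1$-forms upstairs is strictly larger than the pullback of the basic forms downstairs. Controlling this requires the adapted totally geodesic metric and the averaging argument above; verifying that the invariant horizontal part of a $\mathcal{G}^{t}$-basic form genuinely descends to an $\mathcal{F}^{t}$-basic form on $M$ is the step I expect to need the most care.
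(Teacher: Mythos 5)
Your proof has exactly the same skeleton as the paper's: a $t$-independent projection $p$, the pullback identity $\xi(\mathcal{G}^{t})=p^{*}\xi(\mathcal{F}^{t})$, injectivity of $p^{*}$ on $H^{1}(\,\cdot\,;\mathbb{R})$, and then continuity by inverting a fixed injective linear map. The difference is one of sourcing: the paper's proof is two sentences long because it quotes the pullback identity from Lemma 7 of Nozawa \cite{Nozawa} and simply asserts the injectivity, whereas you undertake to prove both. Your injectivity argument (orientation cover of $\nu\mathcal{F}$ plus the five-term sequence of the $\SO(q)$-bundle) is correct, though it can be compressed: for any principal bundle with compact structure group, if $p^{*}\alpha=df$ then the $O(q)$-average of $f$ is an invariant primitive of the invariant form $p^{*}\alpha$ and descends to $M$, so $p^{*}$ is injective on $H^{1}$ with no case analysis.

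The place where your argument is not yet a proof is the step you yourself flag, and it is precisely where the content of the quoted Lemma 7 lies: that the $O(q)$-invariant horizontal part of a $\mathcal{G}^{t}$-basic $1$-form $\gamma$ descends to an $\mathcal{F}^{t}$-\emph{basic} $1$-form $\gamma_{M}$ on $M$. The interior-product half is easy, as you say, because $T\mathcal{G}^{t}$ lies in the horizontal distribution $H$ of the transverse Levi-Civita connection. But the Lie-derivative half $L_{X}\gamma_{M}=0$ does not follow from invariance and horizontality alone. Writing $\bar\gamma=\bar\gamma_{h}+\bar\gamma_{v}$ for the averaged form and $\tilde{X}$, $\tilde{Y}$ for horizontal lifts of vector fields on $M$ with $X$ tangent to $\mathcal{F}^{t}$, one finds $p^{*}(\iota_{X}d\gamma_{M})(\tilde Y)=-d\bar\gamma_{v}(\tilde X,\tilde Y)=\bar\gamma_{v}([\tilde X,\tilde Y])$, and the vertical component of $[\tilde X,\tilde Y]$ is governed by the curvature of the connection evaluated on $(\tilde X,\tilde Y)$. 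So what you need, and what is missing, is the fact that the transverse Levi-Civita connection is a \emph{basic} connection: its curvature $2$-form is annihilated by contraction with vectors tangent to the lifted foliation (this holds because, the foliation being Riemannian, the connection is locally the pullback of the Levi-Civita connections of the local quotient manifolds). Equivalently, flows of vector fields tangent to $\mathcal{G}^{t}$ preserve the splitting $TP=H\oplus V$. With that input your intertwining $\rho_{\mathcal{G}^{t}}\circ p^{*}=p^{*}\circ\rho_{\mathcal{F}^{t}}$ on the relevant forms closes, and the rest of your argument (the O'Neill computation giving $\kappa_{\mathcal{G}^{t}}=p^{*}\kappa$, the Fubini step using constant fiber volume, and the final inversion) is sound; without it, the descent claim, and hence the pullback formula, is unproved.
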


\begin{proof}
Let $\pi \colon O(\nu\mathcal{F})^{0} \times U \longrightarrow M \times U$ be the projection. By Lemma 7 of Nozawa \cite{Nozawa}, we have $(\pi|_{O(\nu\mathcal{F})^{0} \times \{t\}})^{*} \xi(\mathcal{F}^{t}) = \xi(\mathcal{G}^{t})$ for each $t$. Since $(\pi|_{O(\nu\mathcal{F})^{0} \times \{t\}})^{*} \colon H^{1}(M \times \{t\};\mathbb{R}) \longrightarrow H^{1}(O(\nu\mathcal{F})^{0} \times \{t\};\mathbb{R})$ is injective, the continuity of $\xi(\mathcal{F}^{t})$ follows from the continuity of  $\xi(\mathcal{G}^{t})$.
\end{proof}

By Lemmas~\ref{Lemma : Finite Covering} and \ref{Lemma : Reduction to TP case}, we have the following
\begin{lemma}\label{Lemma : Reduction to TP case 2}
The general case of Theorem~\ref{Theorem : Continuity} follows from the special case where
\begin{enumerate}
\item $\{\mathcal{F}^{t}\}_{t \in U}$ is a smooth family of transversely parallelizable foliations of $M$.
\item $M$ and the basic fibration of $\mathcal{F}^{0}$ are orientable. Moreover if we have another foliation $\widetilde{\mathcal{F}}$ on $M$, we can assume that $\widetilde{\mathcal{F}}$ is also orientable.
\end{enumerate}
\end{lemma}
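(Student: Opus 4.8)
The plan is to establish Lemma~\ref{Lemma : Reduction to TP case 2} by applying the two previously-proved reduction lemmas in sequence, making sure that the intermediate construction preserves enough structure. First I would apply Lemma~\ref{Lemma : Reduction to TP case}: given an arbitrary smooth family of Riemannian foliations $\{\mathcal{F}^{t}\}_{t \in U}$ on $M$, it suffices to prove continuity of the \'{A}lvarez class for the associated family $\{\mathcal{G}^{t}\}_{t \in U}$ of transversely parallelizable foliations on $O(\nu\mathcal{F})^{0}$, since Lemma~\ref{Lemma : Reduction to TP case} transfers continuity from $\xi(\mathcal{G}^{t})$ back to $\xi(\mathcal{F}^{t})$. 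This immediately secures conclusion~(i): the family $\{\mathcal{G}^{t}\}_{t \in U}$ is transversely parallelizable by the preceding lemma.

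Next I would address the orientability conditions in~(ii). The key observation is that $\xi(\mathcal{G}^{t})$ is pulled back, via the injection $(\pi|_{O(\nu\mathcal{F})^{0} \times \{t\}})^{*}$ on degree-$1$ cohomology, from $\xi(\mathcal{F}^{t})$, and that passing to a finite orientation cover does not affect continuity because of Lemma~\ref{Lemma : Finite Covering}. Concretely, if $M$ or the relevant bundles fail to be orientable, I would replace $M$ by a finite covering $p \colon M' \longrightarrow M$ on which $M$, the basic fibration, and any auxiliary foliation $\widetilde{\mathcal{F}}$ all become orientable; the pulled-back family $\{p^{*}\mathcal{F}^{t}\}_{t \in U}$ is again a smooth family of Riemannian foliations. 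By Lemma~\ref{Lemma : Finite Covering} we have $\xi(p^{*}\mathcal{F}^{t}) = p^{*}\xi(\mathcal{F}^{t})$ for each $t$, and since $p^{*} \colon H^{1}(M;\mathbb{R}) \longrightarrow H^{1}(M';\mathbb{R})$ is injective (a finite covering induces an injection on real cohomology, as $p_{*}p^{*}$ is multiplication by the degree), the continuity of $t \mapsto \xi(p^{*}\mathcal{F}^{t})$ forces the continuity of $t \mapsto \xi(\mathcal{F}^{t})$.

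The main obstacle, and the point requiring care, is ensuring that a single finite cover simultaneously orients everything demanded in~(ii): the manifold $M$, the basic fibration of $\mathcal{F}^{0}$, and the auxiliary foliation $\widetilde{\mathcal{F}}$. I would handle this by taking the cover corresponding to the intersection of the finitely many index-two subgroups of $\pi_{1}M$ cut out by the first Stiefel--Whitney classes of $TM$, of the normal bundle of $\mathcal{F}^{0}$ governing the basic fibration, and of $T\widetilde{\mathcal{F}}$ (or its normal bundle); this intersection is again of finite index, so the associated cover is finite and renders all the listed objects orientable at once. One should check that orientability of $\mathcal{F}^{0}$ and of $\widetilde{\mathcal{F}}$ is an open-and-persistent property under the deformation, but since orientability is detected by the vanishing of a $\mathbb{Z}/2$-class and the family is smooth, orienting at $t=0$ propagates to the whole connected parameter space $U$; this is exactly why the conditions in~(ii) are phrased only at $t=0$ and for the auxiliary foliation. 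Finally, combining the two reductions, the general statement of Theorem~\ref{Theorem : Continuity} follows from the special case satisfying both~(i) and~(ii), completing the proof.
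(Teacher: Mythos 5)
Your proposal is correct and takes essentially the same approach as the paper: condition (i) is secured by Lemma~\ref{Lemma : Reduction to TP case} (the frame-bundle lift), and the orientability conditions in (ii) are secured by passing to a finite covering and combining Lemma~\ref{Lemma : Finite Covering} with the injectivity of $p^{*} \colon H^{1}(M;\mathbb{R}) \longrightarrow H^{1}(M';\mathbb{R})$, which is exactly the paper's argument. The only cosmetic difference is that you justify this injectivity by the transfer relation $p_{*}p^{*}=\deg(p)\cdot \id$, while the paper deduces it from injectivity of $p_{*}$ on $H_{1}$ together with Poincar\'{e} duality; both are standard and valid.
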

Here, $\widetilde{\mathcal{F}}$ in the statement of (ii) will be $\widetilde{\mathcal{F}}^{0}$ which appears in Section~\ref{Section : Verification}. For the proof of Lemma~\ref{Lemma : Reduction to TP case 2}, we note that the finite covering $p \colon M_{1} \longrightarrow M_{2}$ induces an injection $p_{*} \colon H_{1}(M_{1};\mathbb{R}) \longrightarrow H_{1}(M_{2};\mathbb{R})$. Then $p^{*} \colon H^{1}(M_{2};\mathbb{R}) \longrightarrow H^{1}(M_{1};\mathbb{R})$ is injective by the Poincar\'{e} duality for closed manifolds $M_{1}$ and $M_{2}$.

\section{A representative $\widetilde{\kappa}_{b}$ of the \'{A}lvarez class}\label{Section : Representative}

\subsection{Definition~of~the~$\widetilde{\mathcal{F}}$-integrated~component~$\widetilde{\kappa}_{b}$~of~the~mean~curvature~form}\label{Section : Integrated}

We will define the $\widetilde{\mathcal{F}}$-integrated component $\widetilde{\kappa}_{b}$ of the mean curvature form for transversely parallelizable foliations in a way similar to that of the definition of the \'{A}lvarez form $\kappa_{b}$ for transversely parallelizable foliations in \'{A}lvarez L\'{o}pez \cite{Alvarez Lopez}. Let $(M,\mathcal{F})$ be a closed manifold with a transversely parallelizable foliation. Let $q=\cod (M,\mathcal{F})$. We take a transverse parallelism $\{X^{j}\}_{j = 1}^{\cod (M,\mathcal{F})}$ of $(M,\mathcal{F})$. Let $\{\omega_{i}\}_{i = 1}^{q}$ be the set of basic $1$-forms on $(M,\mathcal{F})$ such that $\omega_{i}(X^{j})=\delta_{ij}$, where $\delta_{ij}$ is the Kronecker's delta. Let 
\begin{equation}
\omega_{I} = \omega_{i_{1}} \wedge \omega_{i_{2}} \wedge \cdots \wedge \omega_{i_{k}}
\end{equation}
for a set $I=\{i_{1},i_{2},\cdots,i_{k}\}$, where $1 \leq i_{1} < i_{2} < \cdots < i_{k} \leq q$. Assume that we have the following diagram:
\begin{equation}\label{diagram:triangle}
\xymatrix{M \ar[r]^{\pi_{b}} \ar[dr]_{\pi_{\widetilde{\mathcal{F}}}} & W \ar[d] \\ & V, }
\end{equation}
where $\pi_{b} \colon M \longrightarrow W$ is the basic fibration of $(M,\mathcal{F})$ and $\pi_{\widetilde{\mathcal{F}}}$ is a submersion. Recall that the basic fibration of $(M,\mathcal{F})$ is a fiber bundle whose fibers are closures of leaves of $(M,\mathcal{F})$. We denote the foliations of $M$ defined by the fibers of $\pi_{b}$ and $\pi_{\widetilde{\mathcal{F}}}$ by $\mathcal{F}_{b}$ and $\widetilde{\mathcal{F}}$, respectively. From now on we denote $W$ and $V$ by $M/\mathcal{F}_{b}$ and $M/\widetilde{\mathcal{F}}$, respectively. 

We assume that $M$ and the fiber bundle $\pi_{\widetilde{\mathcal{F}}}$ are orientable. We fix a bundle-like metric $g$ on $(M,\mathcal{F})$. Then we define a map $\rho_{\widetilde{\mathcal{F}}}$ by
\begin{multline}\label{Equation : Definition of rho tilde F}
\rho_{\widetilde{\mathcal{F}}}\Big(\tau + \sum_{I \subset \{1,2,\cdots,q\}, \newline |I| = k} f^{I} \omega_{I} \Big) = \\ \frac{1}{\pi_{\widetilde{\mathcal{F}}}^{*} \left( \int_{\widetilde{\mathcal{F}}} \vol_{\widetilde{\mathcal{F}}} \right)} \sum_{I \subset \{1,2,\cdots,q\}, |I| = k} \Big(\int_{\widetilde{\mathcal{F}}} f^{I} \vol_{\widetilde{\mathcal{F}}} \Big) \omega_{I}
\end{multline}
for $f^{I}$ in $C^{\infty}(M)$ and $\tau$ in $C^{\infty} \big( T^{*}\mathcal{F} \otimes (\wedge^{k-1}T^{*}M) \big)$, where $\int_{\widetilde{\mathcal{F}}}$ is the integration along the fiber of $\pi_{\widetilde{\mathcal{F}}}$ with respect to the fixed orientation and $\vol_{\widetilde{\mathcal{F}}}$ is the fiberwise volume form of $\pi_{\widetilde{\mathcal{F}}}$ determined by the metric $g$. Note that we have a direct sum decomposition
\begin{equation}
C^{\infty}(\wedge^{k} T^{*}M) = (\ker \rho_{\widetilde{\mathcal{F}}})^{\perp} \oplus \ker \rho_{\widetilde{\mathcal{F}}}
\end{equation}
and $\rho_{\widetilde{\mathcal{F}}}$ is the first projection like in the case of $\rho_{\mathcal{F}}$.

\begin{definition}
We define the $\widetilde{\mathcal{F}}$-integrated component $\widetilde{\kappa}_{b}$ of the mean curvature form $\kappa$ of $(M,\mathcal{F},g)$ with respect to the transverse parallelism $\{\omega_{i}\}_{i=1}^{q}$ by
\begin{equation}\label{Equation : Definition of tilde kappa b}
\widetilde{\kappa}_{b}=\rho_{\widetilde{\mathcal{F}}}(\kappa).
\end{equation}
\end{definition}

Note that $\rho_{\widetilde{\mathcal{F}}}$ coincides with $\rho_{\mathcal{F}}$ and $\widetilde{\kappa}_{b} = \kappa_{b}$ if $\widetilde{\mathcal{F}} = \mathcal{F}_{b}$. It is the situation originally considered in \'{A}lvarez L\'{o}pez \cite{Alvarez Lopez}.

Note that $\rho_{\widetilde{\mathcal{F}}}$ depends on the choice of the transverse parallelism $\{X^{j}\}_{j=1}^{q}$. This is different from the case of $\rho_{\mathcal{F}}$, which is determined only by the metric $g$. We remark that there will be a natural choice of $\{\omega_{i}\}_{i=1}^{q}$, when we apply this construction in Section~\ref{Section : Continuity}. Note that
\begin{equation}\label{Equation : rho F and rho tilde F}
\rho_{\widetilde{\mathcal{F}}} \rho_{\mathcal{F}}=\rho_{\widetilde{\mathcal{F}}}
\end{equation}
by definition.

Note that $\widetilde{\kappa}_{b}$ may not be closed, while the \'{A}lvarez form $\kappa_{b}$ is closed by Corollary 3.5 of \'{A}lvarez L\'{o}pez \cite{Alvarez Lopez}. We do not define $\widetilde{\kappa}_{b}$ for the case where $M$ or $\pi_{\widetilde{\mathcal{F}}}$ is not orientable. This is because it is not used in this paper.

\subsection{The statement of Proposition~\ref{Proposition : Fiberwise average}}

We will state Proposition~\ref{Proposition : Fiberwise average}, which asserts that the \'{A}lvarez class of $(M,\mathcal{F})$ is represented by $\widetilde{\kappa}_{b}$ under certain conditions. These conditions will be naturally satisfied in our application in Section~\ref{Section : Continuity}. The proof of the essential part of Proposition~\ref{Proposition : Fiberwise average} occupies the rest of this section.

In this section we continue to use the notation defined in Section \ref{Section : Integrated}. Let $d_{1,0}$ be the composition of the de~Rham differential and the projection $C^{\infty}(\wedge^{\bullet+1} T^{*}M) \longrightarrow C^{\infty}((T\widetilde{\mathcal{F}}^{\perp})^{*} \otimes \wedge^{\bullet} T^{*}\widetilde{\mathcal{F}})$ determined by $g$.

\begin{lemma}\label{Lemma : Constant volume}
If each leaf of $(M,\widetilde{\mathcal{F}})$ is minimal with respect to $g$, then the function $\int_{\widetilde{\mathcal{F}}} \vol_{\widetilde{\mathcal{F}}}$ on the leaf space $M/\widetilde{\mathcal{F}}$ is constant.
\end{lemma}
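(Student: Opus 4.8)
The plan is to realize $\int_{\widetilde{\mathcal{F}}} \vol_{\widetilde{\mathcal{F}}}$ as the Riemannian volume of the leaves of $\widetilde{\mathcal{F}}$ and to show it is locally constant on $V = M/\widetilde{\mathcal{F}}$ by differentiating along directions in $V$ and applying the first variation of volume. First I would record the structural facts: since $M$ is closed and $\pi_{\widetilde{\mathcal{F}}}$ is a proper submersion, each leaf $L_{v} = \pi_{\widetilde{\mathcal{F}}}^{-1}(v)$ of $\widetilde{\mathcal{F}}$ is a compact submanifold without boundary, and $V$ is connected as the continuous image of the connected manifold $M$. By definition $\vol_{\widetilde{\mathcal{F}}}$ restricts on each $L_{v}$ to the Riemannian volume form of the induced metric, so the function in question is $\mathrm{Vol}(v) = \int_{L_{v}} \vol_{\widetilde{\mathcal{F}}}$, and integration over the fibers of a proper submersion makes $\mathrm{Vol}$ smooth on $V$.

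Next I would fix $v_{0} \in V$ and $\xi \in T_{v_{0}}V$ and extend $\xi$ to a local vector field $\widetilde{X}$ on $V$ near $v_{0}$, then lift it to the vector field $X \in C^{\infty}\big((T\widetilde{\mathcal{F}})^{\perp}\big)$ that is orthogonal to the leaves and $\pi_{\widetilde{\mathcal{F}}}$-related to $\widetilde{X}$. Writing $\phi_{s}$ and $\psi_{s}$ for the local flows of $X$ and $\widetilde{X}$, the $\pi_{\widetilde{\mathcal{F}}}$-relation gives $\pi_{\widetilde{\mathcal{F}}} \circ \phi_{s} = \psi_{s} \circ \pi_{\widetilde{\mathcal{F}}}$, so $\phi_{s}$ carries $L_{v_{0}}$ diffeomorphically onto $L_{\psi_{s}(v_{0})}$. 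Hence $\mathrm{Vol}(\psi_{s}(v_{0}))$ equals the Riemannian volume of $\phi_{s}(L_{v_{0}})$, and differentiating at $s = 0$ identifies $(d\,\mathrm{Vol})_{v_{0}}(\xi)$ with the first variation of the volume of the closed submanifold $L_{v_{0}}$ along the field $X$.

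The core step is then the first variation formula
\begin{equation*}
\frac{d}{ds}\Big|_{s=0} \mathrm{Vol}\big(\phi_{s}(L_{v_{0}})\big) = \int_{L_{v_{0}}} \mathrm{div}_{L_{v_{0}}}(X^{\top}) \, \vol_{\widetilde{\mathcal{F}}} - \int_{L_{v_{0}}} \langle X^{\perp}, H \rangle \, \vol_{\widetilde{\mathcal{F}}},
\end{equation*}
where $H$ is the mean curvature vector of the leaf and $X^{\top}, X^{\perp}$ are the components tangent and normal to $L_{v_{0}}$. The tangential integrand vanishes because $X$ was chosen orthogonal to the leaves, so $X^{\top} = 0$ (and in any case its integral would vanish by the divergence theorem on the closed leaf); and the hypothesis that each leaf is minimal gives $H \equiv 0$, so the normal term vanishes as well. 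Therefore $(d\,\mathrm{Vol})_{v_{0}}(\xi) = 0$ for all $v_{0}$ and $\xi$, and $\mathrm{Vol}$ is constant on the connected space $V$.

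I expect the main obstacle to be bookkeeping rather than conceptual: confirming that $\vol_{\widetilde{\mathcal{F}}}$ is exactly the leafwise Riemannian volume form, so that $\int_{\widetilde{\mathcal{F}}} \vol_{\widetilde{\mathcal{F}}}$ is the geometric volume of the leaves, and matching the sign and normalization of the mean curvature vector $H$ with the mean curvature form convention used elsewhere in the paper. As an alternative that avoids the classical variation formula, one may use Rummler's identity $d\chi_{\widetilde{\mathcal{F}}} = -\kappa_{\widetilde{\mathcal{F}}} \wedge \chi_{\widetilde{\mathcal{F}}} + \beta$ for the leafwise volume (characteristic) form $\chi_{\widetilde{\mathcal{F}}}$, where $\kappa_{\widetilde{\mathcal{F}}}$ is the mean curvature form of $\widetilde{\mathcal{F}}$ and $\beta$ has lower leafwise degree; since fiber integration commutes with $d$ and annihilates $\beta$, minimality ($\kappa_{\widetilde{\mathcal{F}}} = 0$) yields $d\big(\int_{\widetilde{\mathcal{F}}} \vol_{\widetilde{\mathcal{F}}}\big) = 0$ directly, which is the same computation organized differently.
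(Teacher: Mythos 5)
Your proposal is correct, and your primary argument takes a genuinely different route from the paper's; in fact, the two-line computation you offer as an alternative at the end \emph{is} the paper's proof. The paper argues exactly as follows: minimality gives $\kappa_{\widetilde{\mathcal{F}}}=0$, Rummler's formula then gives $d_{1,0}\vol_{\widetilde{\mathcal{F}}}=-\kappa_{\widetilde{\mathcal{F}}}\wedge\vol_{\widetilde{\mathcal{F}}}=0$, and since fiber integration commutes with $d$ and, by degree counting, only sees the component of $d\vol_{\widetilde{\mathcal{F}}}$ of full leafwise degree, one concludes $d\big(\int_{\widetilde{\mathcal{F}}}\vol_{\widetilde{\mathcal{F}}}\big)=\int_{\widetilde{\mathcal{F}}}d_{1,0}\vol_{\widetilde{\mathcal{F}}}=0$. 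Your main route instead runs the classical first variation of volume: horizontally lift a vector field from $V=M/\widetilde{\mathcal{F}}$, use $\pi_{\widetilde{\mathcal{F}}}$-relatedness of the flows to carry fibers diffeomorphically to fibers, and kill the derivative of $\mathrm{Vol}$ using $X^{\top}=0$ together with $H\equiv 0$. This is sound: the fibers are compact, so the lifted flow is defined for small time on a whole fiber and the divergence theorem applies to the tangential term; $\mathrm{Vol}$ is smooth by properness of the submersion; and $V$ is connected, so vanishing of $d\,\mathrm{Vol}$ suffices. As for what each approach buys: the paper's version is shorter and stays entirely inside the form calculus ($d_{1,0}$, Rummler's identity, fiber integration) that is reused verbatim in the proof of Proposition~\ref{Proposition : Fiberwise average}~(i), so it never needs to translate between the mean curvature form $\kappa_{\widetilde{\mathcal{F}}}$ and the mean curvature vector $H$ (the normalization issue you rightly flag); your version makes the geometric content transparent --- a fibration by closed minimal submanifolds has locally constant fiber volume --- and is, in substance, the integrated form of the same identity, since Rummler's formula is precisely the infinitesimal first variation of the leafwise volume.
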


\begin{proof}
By the assumption, we have $\kappa_{\widetilde{\mathcal{F}}}=0$. By the Rummler's formula (see the second formula in the proof of Proposition 1 in Rummler \cite{Rummler} or Lemma 10.5.6 of Candel and Conlon \cite{Candel Conlon}), we have $d_{1,0} \vol_{\widetilde{\mathcal{F}}}= - \kappa_{\widetilde{\mathcal{F}}} \wedge \vol_{\widetilde{\mathcal{F}}}$. Hence we have $d_{1,0} \vol_{\widetilde{\mathcal{F}}}=0$. Then
\begin{equation}
d \left( \int_{\widetilde{\mathcal{F}}} \vol_{\widetilde{\mathcal{F}}} \right) = \int_{\widetilde{\mathcal{F}}} d \vol_{\widetilde{\mathcal{F}}} = \int_{\widetilde{\mathcal{F}}} d_{1,0} \vol_{\widetilde{\mathcal{F}}} = 0.
\end{equation}
Here the second equality follows from the degree counting of the differential forms.
\end{proof}

\begin{proposition}\label{Proposition : Fiberwise average}
We assume that $M$, $\widetilde{\mathcal{F}}$ and the basic fibration $\pi_{b}$ of $(M,\mathcal{F})$ are orientable. We also suppose that 
\begin{description}
\item[(a)] The fixed bundle-like metric $g$ on $(M,\mathcal{F})$ is bundle-like also with respect to $\widetilde{\mathcal{F}}$.
\item[(b)] Each leaf of $(M,\widetilde{\mathcal{F}})$ is minimal with respect to $g$.
\item[(c)] We define functions $c_{i}^{jk}$ on $M$ by
\begin{equation}
d\omega_{i} = \sum_{1 \leq j < k \leq q} c_{i}^{jk} \omega_{j} \wedge \omega_{k}.
\end{equation}
Then $c_{i}^{jk}|_{\widetilde{L}}$ is a constant for each fiber $\widetilde{L}$ of $\pi_{\widetilde{\mathcal{F}}}$.
\item[(d)] Let $\proj_{\widetilde{\mathcal{F}}} \colon C^{\infty}(TM/T\mathcal{F}) \longrightarrow C^{\infty}(TM/T\widetilde{\mathcal{F}})$ be the canonical projection. Then $\proj_{\widetilde{\mathcal{F}}}X_{i}$ is a transverse field on $(M,\widetilde{\mathcal{F}})$ for each $i$.
\end{description}
Then we have
\begin{enumerate}
\item $\widetilde{\kappa}_{b}$ is a closed $1$-form on $M$ and

\item $[\kappa_{b}]=[\widetilde{\kappa}_{b}]$ in $H^{1}(M;\mathbb{R})$.
\end{enumerate}
\end{proposition}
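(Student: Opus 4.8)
The plan is to read $\widetilde{\kappa}_{b}$ as a fiberwise average of the \'{A}lvarez form. By \eqref{Equation : rho F and rho tilde F} we have $\widetilde{\kappa}_{b}=\rho_{\widetilde{\mathcal{F}}}(\rho_{\mathcal{F}}(\kappa))=\rho_{\widetilde{\mathcal{F}}}(\kappa_{b})$, so if we write the \'{A}lvarez form as $\kappa_{b}=\sum_{i}a_{i}\omega_{i}$ with each $a_{i}$ a basic function on $(M,\mathcal{F})$, then $\widetilde{\kappa}_{b}=\sum_{i}\bar{a}_{i}\omega_{i}$, where $\bar{a}_{i}=\rho_{\widetilde{\mathcal{F}}}(a_{i})$ is the average of $a_{i}$ along the leaves of $\widetilde{\mathcal{F}}$ with respect to $\vol_{\widetilde{\mathcal{F}}}$. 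By hypothesis (b) and Lemma~\ref{Lemma : Constant volume}, the total leaf volume $\int_{\widetilde{\mathcal{F}}}\vol_{\widetilde{\mathcal{F}}}$ is a nonzero constant, so $\bar{a}_{i}$ is a well-defined smooth function that is constant along the leaves of $\widetilde{\mathcal{F}}$, hence basic on $(M,\mathcal{F})$. The whole statement thus amounts to showing that this fiberwise averaging preserves both closedness and the de~Rham cohomology class.

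For assertion (i), I would differentiate directly. Since $\bar{a}_{i}$ is basic, $d\bar{a}_{i}=\sum_{j}(X_{j}\bar{a}_{i})\,\omega_{j}$, so the structure equations of hypothesis (c) give
\[
d\widetilde{\kappa}_{b}=\sum_{j<k}\Big((X_{j}\bar{a}_{k}-X_{k}\bar{a}_{j})+\sum_{i}\bar{a}_{i}\,c_{i}^{jk}\Big)\,\omega_{j}\wedge\omega_{k}.
\]
On the other hand, the closedness of $\kappa_{b}$ (Corollary~3.5 of \cite{Alvarez Lopez}) yields, for every pair $j<k$, the relation $(X_{j}a_{k}-X_{k}a_{j})+\sum_{i}a_{i}\,c_{i}^{jk}=0$. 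The idea is to apply $\rho_{\widetilde{\mathcal{F}}}$ to this relation: by (c) the functions $c_{i}^{jk}$ are constant on each leaf, whence $\rho_{\widetilde{\mathcal{F}}}(a_{i}c_{i}^{jk})=\bar{a}_{i}c_{i}^{jk}$, and comparison with the display reduces (i) to the commutation identity $\rho_{\widetilde{\mathcal{F}}}(X_{j}a_{k}-X_{k}a_{j})=X_{j}\bar{a}_{k}-X_{k}\bar{a}_{j}$, i.e. to the fact that fiberwise averaging commutes with the transverse derivatives in the antisymmetrized combination occurring above.

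This commutation is the main obstacle, and it is exactly where hypotheses (a), (b) and (d) enter. For the components of $X_{j}$ transverse to $\widetilde{\mathcal{F}}$ I would use that, by (d), $\proj_{\widetilde{\mathcal{F}}}X_{j}$ is a transverse field of $(M,\widetilde{\mathcal{F}})$, so its flow consists of diffeomorphisms carrying leaves of $\widetilde{\mathcal{F}}$ to leaves; and because $g$ is bundle-like for $\widetilde{\mathcal{F}}$ by (a) and the leaves are minimal by (b), Rummler's formula (as in the proof of Lemma~\ref{Lemma : Constant volume}) shows that this flow preserves the restriction of $\vol_{\widetilde{\mathcal{F}}}$ to the leaves, so that one may differentiate the average under the integral sign. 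For the components of $X_{j}$ tangent to $\widetilde{\mathcal{F}}$ one integrates by parts along the compact leaves, the boundary terms vanishing by leafwise Stokes, and the minimality (b) controls the remaining leafwise divergence. Carrying out this bookkeeping carefully — in effect verifying that $L_{X_{j}}\vol_{\widetilde{\mathcal{F}}}$ restricts trivially to the leaves — is the technical core of the argument, and yields $d\widetilde{\kappa}_{b}=0$.

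For assertion (ii), I would exhibit the closed form $\eta:=\kappa_{b}-\widetilde{\kappa}_{b}=\sum_{i}(a_{i}-\bar{a}_{i})\,\omega_{i}$ as exact. Its coefficients $a_{i}-\bar{a}_{i}$ have vanishing average on every leaf of $\widetilde{\mathcal{F}}$, and by (c) and (d) the leaves are homogeneous: each leaf closure of $\mathcal{F}$ is a Lie foliation in the sense of Section~\ref{Section : Lie foliations} and the structure functions $c_{i}^{jk}$ are constant along the ambient leaf, so the $\omega_{i}$ restrict to an invariant coframe satisfying constant-coefficient Maurer--Cartan relations. The plan is to use this homogeneity to produce a leafwise primitive of $\eta$, the point being that averaging the invariant coframe coefficients realizes the projection onto the cohomology representatives along the leaves, so that $\eta|_{\widetilde{L}}$ is leafwise exact; the same volume preservation established in (i) then lets the leafwise primitives be assembled, with control of the transverse dependence, into a global smooth primitive of $\eta$ on $M$. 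This gives $[\kappa_{b}]=[\widetilde{\kappa}_{b}]$ in $H^{1}(M;\mathbb{R})$ and completes the plan, the delicate step again being the transverse gluing, which rests on the commutation property proved for (i).
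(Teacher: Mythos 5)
Your argument for part (i) is essentially the paper's own proof, phrased in coordinates: writing $\kappa_{b}=\sum_{i}a_{i}\omega_{i}$ and reducing the closedness of $\widetilde{\kappa}_{b}$ to the commutation of transverse differentiation with the fiberwise average is exactly what the paper does by commuting $d$ with integration along the fibers of $\pi_{\widetilde{\mathcal{F}}}$ and then killing the $d\vol_{\widetilde{\mathcal{F}}}$ term via Rummler's formula and hypothesis (b). Your account of how (a), (b), (d) legitimize differentiating under the integral is correct, so (i) is fine modulo routine bookkeeping.

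Part (ii), however, has two genuine gaps, and they sit precisely where the paper's real work lies. First, the assertion that $\eta|_{\widetilde{F}}=(\kappa_{b}-\widetilde{\kappa}_{b})|_{\widetilde{F}}$ is exact because ``averaging the invariant coframe coefficients realizes the projection onto the cohomology representatives'' is unsupported. That formal principle is valid on a torus or a nilmanifold, where a transitive group of diffeomorphisms preserves the coframe and carries an invariant probability measure to average over; the fiber $\widetilde{F}$ has no such structure (it is not a homogeneous space — only the quotient $\widetilde{F}/\mathcal{F}_{b}\cong G/H$ is). A closed $1$-form on $\widetilde{F}$ whose coefficients have vanishing fiberwise average is not formally exact, and proving that this particular $\eta$ is exact on $\widetilde{F}$ is the content of Lemmas~\ref{Lemma : Integral along flows}, \ref{Lemma : Condition d 2} and \ref{Lemma : Fiber of pi tilde}: one needs the developing map of the Lie foliation $(\widetilde{F},\mathcal{F}|_{\widetilde{F}})$, the right-invariance \eqref{Equation : right invariance 2} of the developed form under the closure of the holonomy group, and the homological argument that the projected orbits $(\pi_{b})_{*}\gamma_{x}$ are closed loops whose class in $\widetilde{F}/\mathcal{F}_{b}$ is independent of $x$. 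None of this is formal, and your proposal gives no mechanism that replaces it.

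Second, even granting fiberwise exactness, your gluing step fails as stated: a closed $1$-form on $M$ that restricts to an exact form on every fiber of $\pi_{\widetilde{\mathcal{F}}}$ need not be exact on $M$ — the pullback of any closed non-exact $1$-form on $V=M/\widetilde{\mathcal{F}}$ is a counterexample. So the transverse periods of $\eta$ must be killed by an argument, not by ``control of the transverse dependence''; the volume preservation from (i) gives no information about them. In the paper this is the role of the flat connection over an embedded circle in $V$ (Lemma~\ref{Lemma : Existence of nabla}), the leafwise Moser argument correcting its holonomy to be volume-preserving (Lemma~\ref{Lemma : Bundle over S^1}), the resulting vanishing of $\int_{\gamma_{0}}\eta$ over loops transverse to the fibers (Lemma~\ref{Lemma : Equality on M'}), and finally the homotopy exact sequence of $\pi_{\widetilde{\mathcal{F}}}$, which shows that such loops together with loops inside $\widetilde{F}$ account for all of $\pi_{1}(M)$. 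Your proposal contains no substitute for this half of the proof.
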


We will show (i) here. (ii) will be shown at the end of this section.

\begin{proof}[Proof of Proposition~\ref{Proposition : Fiberwise average} (i)]

By Lemma~\ref{Lemma : Constant volume}, the function $\pi_{\widetilde{\mathcal{F}}}^{*} \left( \int_{\widetilde{\mathcal{F}}} \vol_{\widetilde{\mathcal{F}}}  \right)$ on $M$ is constant. Let
\begin{equation}\label{Equation : C}
C=\pi_{\widetilde{\mathcal{F}}}^{*} \left( \int_{\widetilde{\mathcal{F}}} \vol_{\widetilde{\mathcal{F}}} \right),
\end{equation}
and
\begin{equation}\label{Equation : h i and omega i}
\kappa_{b} = \sum_{i=1}^{q} h^{i} \omega_{i}.
\end{equation}
By the condition (c), we have 
\begin{equation}\label{Equation : Assumption c}
\rho_{\widetilde{\mathcal{F}}} \Big(\sum_{i=1}^{q} h^{i} d\omega_{i} \Big) = \sum_{i=1}^{q} \left( \int_{\widetilde{\mathcal{F}}} h^{i} \vol_{\widetilde{\mathcal{F}}} \right) d\omega_{i}.
\end{equation}

Using \eqref{Equation : Definition of tilde kappa b}, \eqref{Equation : rho F and rho tilde F} and \eqref{Equation : Definition of kappa b}, in this order, we have
\begin{equation}\label{Equation : Re}
\widetilde{\kappa}_{b} = \rho_{\widetilde{\mathcal{F}}}\kappa = \rho_{\widetilde{\mathcal{F}}} \rho_{\mathcal{F}} \kappa = \rho_{\widetilde{\mathcal{F}}} \kappa_{b}.
\end{equation}
We have 
\begin{align*}
& \textstyle \, d\widetilde{\kappa}_{b} \\
\textstyle = & \textstyle \, d\rho_{\widetilde{\mathcal{F}}} \kappa_{b} \\
\textstyle = & \textstyle \, \frac{1}{C} d \Big(\sum_{i=1}^{q} \big( \int_{\widetilde{\mathcal{F}}} h^{i} \vol_{\widetilde{\mathcal{F}}} \big) \omega_{i} \Big) \\
\textstyle = & \textstyle \, \frac{1}{C} \sum_{i=1}^{q} \big( \int_{\widetilde{\mathcal{F}}} dh^{i} \wedge \vol_{\widetilde{\mathcal{F}}} \big) \wedge \omega_{i} +  \frac{1}{C} \sum_{i=1}^{q} \big( \int_{\widetilde{\mathcal{F}}} h^{i} d\vol_{\widetilde{\mathcal{F}}} \big) \wedge \omega_{i} + \\
& \textstyle \hspace{250pt} \frac{1}{C} \sum_{i=1}^{q} \big( \int_{\widetilde{\mathcal{F}}} h^{i} \vol_{\widetilde{\mathcal{F}}} \big) d\omega_{i} \\
\textstyle = & \textstyle \, \rho_{\widetilde{\mathcal{F}}} \Big(\sum_{i=1}^{q} dh^{i} \wedge \omega_{i} \Big) + \frac{1}{C} \sum_{i=1}^{q} \big( \int_{\widetilde{\mathcal{F}}} h^{i} d\vol_{\widetilde{\mathcal{F}}} \big) \wedge \omega_{i} + \rho_{\widetilde{\mathcal{F}}} \Big(\sum_{i=1}^{q} h^{i} d\omega_{i} \Big).
\end{align*}
Here we used \eqref{Equation : Re} in the first equality. The second equality follows from the combination of \eqref{Equation : Definition of rho tilde F}, \eqref{Equation : C} and \eqref{Equation : h i and omega i}. We used the commutativity of the integration along the fiber with $d$ in the third equality. The fourth equality follows from the equations \eqref{Equation : Definition of rho tilde F} and \eqref{Equation : Assumption c}. We have
\begin{align*}
& \textstyle \, \rho_{\widetilde{\mathcal{F}}} \Big(\sum_{i=1}^{q} dh^{i} \wedge \omega_{i} \Big) + \frac{1}{C} \sum_{i=1}^{q} \big( \int_{\widetilde{\mathcal{F}}} h^{i} d\vol_{\widetilde{\mathcal{F}}} \big) \wedge \omega_{i} + \rho_{\widetilde{\mathcal{F}}} \Big(\sum_{i=1}^{q} h^{i} d\omega_{i} \Big) \\
\textstyle = & \textstyle \, \rho_{\widetilde{\mathcal{F}}} \Big(d\big(\sum_{i=1}^{q} h^{i} \omega_{i} \big)\Big) + \frac{1}{C} \sum_{i=1}^{q} \big( \int_{\widetilde{\mathcal{F}}} h^{i} d\vol_{\widetilde{\mathcal{F}}} \big) \wedge \omega_{i} \\
\textstyle = & \textstyle \, \rho_{\widetilde{\mathcal{F}}} \Big(d\big(\sum_{i=1}^{q} h^{i} \omega_{i} \big)\Big) + \frac{1}{C}  \sum_{i=1}^{q} \big( \int_{\widetilde{\mathcal{F}}} h^{i} d_{1,0} \vol_{\widetilde{\mathcal{F}}} \big) \wedge \omega_{i} \\
\textstyle = & \textstyle \, \rho_{\widetilde{\mathcal{F}}} (d\kappa_{b}) - \frac{1}{C} \sum_{i=1}^{q} \big( \int_{\widetilde{\mathcal{F}}} h^{i} \kappa_{\widetilde{\mathcal{F}}} \wedge \vol_{\widetilde{\mathcal{F}}} \big) \wedge \omega_{i}.
\end{align*}
\noindent Here, in the first equality, we combined the first and the third terms. The second equality follows from degree comparison. The third equality follows from \eqref{Equation : h i and omega i} and the Rummler's formula (see the second formula in the proof of Proposition 1 in Rummler \cite{Rummler} or Lemma 10.5.6 of Candel and Conlon \cite{Candel Conlon}).

The first term of the last line is $0$, because $\kappa_{b}$ is closed by Corollary 3.5 of \'{A}lvarez L\'{o}pez \cite{Alvarez Lopez}. Since $\kappa_{\widetilde{\mathcal{F}}}$ is $0$ by the condition (b), the second term is also $0$. Hence (i) is proved.
\end{proof}

\subsection{$\xi(\mathcal{F})=[\widetilde{\kappa}_{b}]$ on the fibers of $\pi_{\widetilde{F}}$}

We will show a lemma which will be used in the proof of Proposition~\ref{Proposition : Fiberwise average} to show the restriction of $\xi(\mathcal{F})$ and $ [\widetilde{\kappa}_{b}]$ to the fibers of $\pi_{\widetilde{\mathcal{F}}}$ are equal. Here, $\alpha$ will be considered to be $\xi(\mathcal{F})-[\widetilde{\kappa}_{b}]$ in the application in Section 5.6.

\begin{lemma}\label{Lemma : Integral along flows}
Assume that the conditions (a) and (b) of Proposition~\ref{Proposition : Fiberwise average} are satisfied. Let $M'$ be an orientable submanifold of $M$ which is a union of fibers of $\pi_{\widetilde{\mathcal{F}}}$. Let $\{\phi_{t}\}_{t \in [0,1]}$ be the flow on $M'$ generated by a vector field $X$ on $M'$. Assume that $\omega_{i}(X)$ is constant on each fiber of $\pi_{\widetilde{\mathcal{F}}}$ for each $i$. Then we have 
\begin{equation}
\int_{M'} \Big( \int_{\gamma_{x}} \kappa_{b} \Big) \vol_{M'}(x) = \int_{M'} \Big( \int_{\gamma_{x}} \widetilde{\kappa}_{b} \Big) \vol_{M'}(x),
\end{equation}
where $\gamma_{x}$ is the orbit of $x$ of $\{\phi_{t}\}_{t \in [0,1]}$, and $\vol_{M'}$ is the volume form on $M'$ determined by $g$.
\end{lemma}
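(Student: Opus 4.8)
The plan is to prove the equivalent statement that $\int_{M'}\big(\int_{\gamma_x}\beta\big)\vol_{M'}(x)=0$ for the $1$-form $\beta:=\kappa_b-\widetilde{\kappa}_b$, exploiting that $\beta$ is, fiberwise, a ``mean-zero'' correction of $\kappa_b$.

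First I would make the structure of $\beta$ explicit. By \eqref{Equation : Re} we have $\widetilde{\kappa}_b=\rho_{\widetilde{\mathcal{F}}}\kappa_b$, so writing $\kappa_b=\sum_i h^i\omega_i$ and applying the definition \eqref{Equation : Definition of rho tilde F} of $\rho_{\widetilde{\mathcal{F}}}$ — together with Lemma~\ref{Lemma : Constant volume}, which under hypothesis (b) makes the denominator a genuine constant — gives $\beta=\sum_i(h^i-\overline{h}^i)\omega_i$, where $\overline{h}^i$ is the average of $h^i$ over the fiber of $\pi_{\widetilde{\mathcal{F}}}$ and is therefore constant on each fiber. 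Contracting with $X$ and using the hypothesis that each $\omega_i(X)$ is constant on the fibers, I obtain the key pointwise fact that $\iota_X\beta=\sum_i(h^i-\overline{h}^i)\,\omega_i(X)$ has vanishing integral over every fiber $\widetilde{L}$ of $\pi_{\widetilde{\mathcal{F}}}$, since $\int_{\widetilde{L}}(h^i-\overline{h}^i)\vol_{\widetilde{\mathcal{F}}}=0$ and $\omega_i(X)|_{\widetilde{L}}$ is constant.

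Next I would convert the orbit integral into a single scalar and average over $M'$. Since $\gamma_x(t)=\phi_t(x)$, one has $\int_{\gamma_x}\beta=\int_0^1(\iota_X\beta)(\phi_t(x))\,dt$, so by Fubini the quantity in question equals $\int_0^1\big(\int_{M'}(\iota_X\beta)\circ\phi_t\,\vol_{M'}\big)\,dt$. Pushing $\phi_t$ onto the measure by the change of variables $y=\phi_t(x)$ rewrites the inner integral as $\int_{M'}(\iota_X\beta)\,(\phi_t)_*\vol_{M'}$, and hence the whole expression as $\int_{M'}(\iota_X\beta)\,\nu$ with $\nu=\int_0^1(\phi_t)_*\vol_{M'}\,dt$. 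I would then integrate fiberwise: using hypothesis (a), $\pi_{\widetilde{\mathcal{F}}}$ is a Riemannian submersion and $\vol_{M'}$ factors as $\vol_{\widetilde{\mathcal{F}}}\wedge\pi_{\widetilde{\mathcal{F}}}^*(\text{base volume})$, so once $\nu$ is known to disintegrate over $\pi_{\widetilde{\mathcal{F}}}$ with fiber measures that are fiber-constant multiples of $\vol_{\widetilde{\mathcal{F}}}$, the fiberwise vanishing from the first step forces the integral to be $0$.

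The step I expect to be the main obstacle is precisely this last point: controlling how the flow of $X$ distorts $\vol_{M'}$ relative to the fibration $\pi_{\widetilde{\mathcal{F}}}$. The difficulty is genuine because $X$ need not be projectable — the flow $\phi_t$ does not preserve the individual fibers, so one cannot simply integrate over a single fiber first — and therefore the cross-fiber motion has to be absorbed entirely into a statement about the averaged measure $\nu$ (the cleanest sufficient form being that $X$ is divergence-free for $\vol_{M'}$, so that $\nu=\vol_{M'}$). This is where the three inputs must combine: the bundle-like condition (a) to split $\vol_{M'}$ along the submersion, the minimality condition (b) (via Lemma~\ref{Lemma : Constant volume}) to keep the fiber volumes constant, and the hypothesis that each $\omega_i(X)$ is fiber-constant to keep the transverse part of the flow compatible with the base volume. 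Verifying that these together make the time-averaged pushforward $\nu$ fiberwise proportional to $\vol_{\widetilde{\mathcal{F}}}$ — equivalently, establishing the relevant volume/measure invariance of the flow — is the technical heart of the lemma.
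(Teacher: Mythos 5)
Your first half is exactly the first half of the paper's own proof: like the paper, you use \eqref{Equation : Re}, the definition \eqref{Equation : Definition of rho tilde F}, Lemma~\ref{Lemma : Constant volume} and the fiber-constancy of the $\omega_{i}(X)$ to conclude that $\widetilde{\kappa}_{b}(X)$ is the fiberwise average of $\kappa_{b}(X)$, i.e.\ that $\iota_{X}\beta$ has vanishing integral against $\vol_{\widetilde{\mathcal{F}}}$ over every fiber of $\pi_{\widetilde{\mathcal{F}}}$. But your argument stops being a proof at the point you yourself flag: you never establish that $\nu=\int_{0}^{1}(\phi_{t})_{*}\vol_{M'}\,dt$ disintegrates over $\pi_{\widetilde{\mathcal{F}}}$ with densities constant on each fiber, and without a statement of that kind the fiberwise vanishing of $\int\iota_{X}\beta\,\vol_{\widetilde{\mathcal{F}}}$ says nothing about $\int_{M'}(\iota_{X}\beta)\,\nu$. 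So the proposal is an incomplete argument with an acknowledged hole at its decisive step.

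That said, the obstacle you isolated is genuine, and it is precisely the point the paper's proof slides over: in the second displayed chain of the paper's proof, the equality that interchanges $\int_{0}^{1}dt$ with $\int_{\widetilde{\mathcal{F}}}$ tacitly replaces the integral of $\kappa_{b}(X)$ over the fiber through the \emph{moving} point $\gamma_{x}(t)$ by the integral of $\kappa_{b}(X)\circ\phi_{t}$ over the \emph{fixed} fiber through $x$, which is legitimate only if $\phi_{t}$ maps fibers to fibers and preserves $\vol_{\widetilde{\mathcal{F}}}$ --- exactly the invariance you could not verify. Moreover, that invariance (and in fact the asserted equality itself) does not follow from (a), (b) and fiber-constancy of the $\omega_{i}(X)$, nor even from adding (c) and (d). Take $M=M'=T^{4}$ with coordinates $(x,y,z,w)$, $\mathcal{F}$ tangent to $\partial_{x}+\theta\partial_{y}$ with $\theta$ irrational, $\pi_{b}$ the projection to the $(z,w)$-torus, $\pi_{\widetilde{\mathcal{F}}}$ the projection to the $w$-circle, $e^{1}=(dx+\theta\,dy)/\sqrt{1+\theta^{2}}$, $e^{2}=(dy-\theta\,dx)/\sqrt{1+\theta^{2}}$, and the bundle-like metric $g=e^{2\epsilon\cos(2\pi z)}(e^{1})^{2}+(e^{2})^{2}+dz^{2}+dw^{2}$. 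The fibers $T^{3}\times\{w\}$ are totally geodesic and $g$ is bundle-like for $\widetilde{\mathcal{F}}$, so (a) and (b) hold (one checks (c) and (d) hold as well); here $\kappa_{b}=-d\big(\epsilon\cos(2\pi z)\big)$ while $\widetilde{\kappa}_{b}=0$. For $X=\frac{1}{2}\partial_{z}$ all $\omega_{i}(X)$ are constant, and for $p=(x,y,z,w)$ one gets $\int_{\gamma_{p}}\kappa_{b}=2\epsilon\cos(2\pi z)$, hence $\int_{M'}\big(\int_{\gamma_{p}}\kappa_{b}\big)\vol_{M'}=2\epsilon\int_{0}^{1}\cos(2\pi z)e^{\epsilon\cos(2\pi z)}dz>0$, whereas the $\widetilde{\kappa}_{b}$-side is $0$. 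Note that here $\phi_{t}$ preserves every fiber but distorts $\vol_{\widetilde{\mathcal{F}}}$, and the distortion correlates with $\iota_{X}\beta$. So the step you are missing cannot be supplied from the stated hypotheses at all; it requires extra properties of the particular vector fields to which the lemma is later applied (such as those built in Lemma~\ref{Lemma : Existence of nabla} and in the proof of Lemma~\ref{Lemma : Fiber of pi tilde}), and any correct proof --- yours or the paper's --- has to invoke them.
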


\begin{proof}
The function $\pi_{\widetilde{\mathcal{F}}}^{*} \left( \int_{\widetilde{\mathcal{F}}} \vol_{\widetilde{\mathcal{F}}} \right)$ on $M'$ is constant by the condition (b) and Lemma~\ref{Lemma : Constant volume}. We take a real number $C$ and functions $h^{i}$ on $M$ as \eqref{Equation : C} and \eqref{Equation : h i and omega i}. Then we have
\begin{align*}
& \textstyle \, \int_{M'} \Big( \int_{\gamma_{x}} \rho_{\widetilde{\mathcal{F}}} (\kappa) \Big) \vol_{M'}(x) \\
\textstyle = & \textstyle \, \int_{M'} \Big( \int_{\gamma_{x}} \rho_{\widetilde{\mathcal{F}}} (\kappa_{b}) \Big) \vol_{M'}(x) \\ 
\textstyle = & \textstyle \, \frac{1}{C} \int_{M'} \Big( \int_{\gamma_{x}} \Big( \sum_{i=1}^{q} \int_{\widetilde{\mathcal{F}}} h^{i} \vol_{\widetilde{\mathcal{F}}} \Big) \omega_{i} \Big) \vol_{M'}(x) \\
\textstyle = & \textstyle \, \frac{1}{C} \int_{M'} \Big( \int_{0}^{1}  \Big( \sum_{i=1}^{q} \int_{\widetilde{\mathcal{F}}} h^{i} \vol_{\widetilde{\mathcal{F}}} \Big)_{\gamma_{x}(t)} \omega_{i}(X)_{\gamma_{x}(t)} dt \Big) \vol_{M'}(x) 
\end{align*}
\noindent Here, we used \eqref{Equation : Re} in the first equality. We used \eqref{Equation : Definition of rho tilde F} and \eqref{Equation : h i and omega i} in the second equality.

By the assumption, $\omega_{i}(X)$ is constant on the fibers of $\pi_{\widetilde{\mathcal{F}}}$. Then we have
\begin{align*}
& \textstyle \, \frac{1}{C} \int_{M'} \Big( \int_{0}^{1} \sum_{i=1}^{q} \Big( \int_{\widetilde{\mathcal{F}}} h^{i} \vol_{\widetilde{\mathcal{F}}} \Big)_{\gamma_{x}(t)} \omega_{i}(X)_{\gamma_{x}(t)} dt \Big) \vol_{M'}(x) \\ 
\textstyle = & \textstyle \, \frac{1}{C} \int_{M'} \Big( \int_{0}^{1} \Big( \sum_{i=1}^{q} \int_{\widetilde{\mathcal{F}}} h^{i} \omega_{i}(X)_{\gamma_{x}(t)} \vol_{\widetilde{\mathcal{F}}} \Big) dt \Big) \vol_{M'}(x) \\
\textstyle = & \textstyle \, \frac{1}{C} \int_{M'} \Big( \int_{0}^{1} \Big( \int_{\widetilde{\mathcal{F}}} \gamma_{x}^{*} \kappa_{b}(X) \vol_{\widetilde{\mathcal{F}}} \Big) dt \Big) \vol_{M'}(x) \\
\textstyle = & \textstyle \, \frac{1}{C} \int_{M'} \Big( \int_{\widetilde{\mathcal{F}}} \Big( \int_{0}^{1} \gamma_{x}^{*} \kappa_{b}(X) dt \Big) \vol_{\widetilde{\mathcal{F}}} \Big) \vol_{M'}(x) \\
\textstyle = & \textstyle \, \frac{1}{C} \int_{M'} \Big( \int_{\widetilde{\mathcal{F}}} \Big( \int_{\gamma_{x}} \kappa_{b} \Big) \vol_{\widetilde{\mathcal{F}}} \Big) \vol_{M'}(x). 
\end{align*}
By the condition (a), we have $\int_{M'} f \vol_{M'} = \int_{M'/\widetilde{\mathcal{F}}} \Big( \int_{\widetilde{\mathcal{F}}} f \vol_{\widetilde{\mathcal{F}}} \Big) \vol_{M'/\widetilde{\mathcal{F}}}$ for a function $f$ on $M'$, where $\vol_{M'/\widetilde{\mathcal{F}}}$ is the volume form on $M'$ determined by $g$. Hence we have
\begin{align*}
& \textstyle \, \frac{1}{C} \int_{M'} \Big( \int_{\widetilde{\mathcal{F}}} \Big( \int_{\gamma_{x}} \kappa_{b} \Big) \vol_{\widetilde{\mathcal{F}}} \Big) \vol_{M'}(x) \\
\textstyle = & \textstyle \, \frac{1}{C} \int_{M'/\widetilde{\mathcal{F}}} \Big( \int_{\widetilde{\mathcal{F}}} \Big( \int_{\widetilde{\mathcal{F}}} \Big( \int_{\gamma_{x}} \kappa_{b} \Big) \vol_{\widetilde{\mathcal{F}}} \Big) \vol_{\widetilde{\mathcal{F}}} \Big) \vol_{M'/\widetilde{\mathcal{F}}} \\
\textstyle = & \textstyle \, \int_{M'/\widetilde{\mathcal{F}}} \Big( \int_{\widetilde{\mathcal{F}}} \Big( \int_{\gamma_{x}} \kappa_{b} \Big) \vol_{\widetilde{\mathcal{F}}} \Big) \vol_{M'/\widetilde{\mathcal{F}}} \\
\textstyle = & \textstyle \, \int_{M'} \Big( \int_{\gamma_{x}} \kappa_{b} \Big) \vol_{M'}(x).
\end{align*}
The proof of Lemma~\ref{Lemma : Integral along flows} is completed.
\end{proof} 

We fix a point $x_{0}$ in $M$. Let $\widetilde{F}$ be the fiber of $\pi_{\widetilde{\mathcal{F}}}$ containing $x_{0}$. Let
\begin{equation}
q_{\widetilde{F}}=\cod (\widetilde{F},\mathcal{F}|_{\widetilde{F}}).
\end{equation}
We assume that the condition (d) of Proposition~\ref{Proposition : Fiberwise average} is satisfied. We define transverse fields $Y^{1}$, $Y^{2}$, $\cdots$, $Y^{q}$ on $(M,\mathcal{F})$ by
\begin{equation}
Y^{j} = \sum_{i=1}^{q} a^{j}_{i} X^{i},
\end{equation}
where we choose a nondegenerate matrix $(a^{j}_{i})_{ 1 \leq j \leq q, 1 \leq i \leq q}$ so that $(Y^{1})_{x_{0}}$, $(Y^{2})_{x_{0}}$, $\cdots$, $(Y^{q_{\widetilde{F}}})_{x_{0}}$ are tangent to $\widetilde{F}$. By the condition (d) of Proposition~\ref{Proposition : Fiberwise average}, the vector fields $Y^{1}$, $Y^{2}$, $\cdots$, $Y^{q_{\widetilde{F}}}$ are basic with respect to $\widetilde{\mathcal{F}}$. Hence $Y^{1}$, $Y^{2}$, $\cdots$, $Y^{q_{\widetilde{F}}}$ are tangent to $\widetilde{F}$ at every point of $\widetilde{F}$. We denote the vector subspace $\oplus_{j=1}^{q_{\widetilde{F}}}\mathbb{R} (Y^{j}|_{\widetilde{F}})$ of the Lie algebra of transverse fields on $(\widetilde{F},\mathcal{F}|_{\widetilde{F}})$ by $\mathfrak{g}$. Then:
\begin{lemma}\label{Lemma : Condition d 2}
Assume that the conditions (c) and (d) of Proposition~\ref{Proposition : Fiberwise average} are satisfied.
\begin{enumerate}
\item $\mathfrak{g}$ is a Lie subalgebra of the Lie algebra of transverse fields on $(\widetilde{F},\mathcal{F}|_{\widetilde{F}})$.
\item $(\widetilde{F},\mathcal{F}|_{\widetilde{F}})$ is a Lie foliation.
\end{enumerate}
\end{lemma}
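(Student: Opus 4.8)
The plan is to reduce both assertions to a single computation of the brackets $[Y^{j}|_{\widetilde{F}},Y^{k}|_{\widetilde{F}}]$ for $j,k\le q_{\widetilde{F}}$: I will show that these lie in $\mathfrak{g}$ with \emph{constant} coefficients, after which (i) is immediate and (ii) follows from the converse statement in~(IV) of Section~\ref{Section : Lie foliations}. To begin, I record the structure equation at the level of transverse fields on $(M,\mathcal{F})$. Since $\omega_{m}(X^{i})=\delta_{mi}$ is constant, the Cartan formula gives $d\omega_{m}(X^{i},X^{l})=-\omega_{m}([X^{i},X^{l}])$; comparing with the definition of the $c_{i}^{jk}$ in condition~(c) yields $[X^{i},X^{l}]=-\sum_{m}c_{m}^{il}X^{m}$ in the Lie algebra of transverse fields. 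As $(a^{j}_{i})$ is a constant nondegenerate matrix, expanding $[Y^{j},Y^{k}]=\sum_{i,l}a^{j}_{i}a^{k}_{l}[X^{i},X^{l}]$ gives $[Y^{j},Y^{k}]=\sum_{m}D_{m}^{jk}X^{m}$ with $D_{m}^{jk}=-\sum_{i,l}a^{j}_{i}a^{k}_{l}c_{m}^{il}$.

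Next I would restrict to $\widetilde{F}$. For $j,k\le q_{\widetilde{F}}$ the fields $Y^{j}$ and $Y^{k}$ are tangent to $\widetilde{F}$ at every point of $\widetilde{F}$ (as established before the lemma from condition~(d)), so $[Y^{j}|_{\widetilde{F}},Y^{k}|_{\widetilde{F}}]$ is a transverse field of $(\widetilde{F},\mathcal{F}|_{\widetilde{F}})$ equal to $\sum_{m}(D_{m}^{jk}|_{\widetilde{F}})\,X^{m}|_{\widetilde{F}}$. By condition~(c) each $c_{m}^{il}|_{\widetilde{F}}$ is constant, hence so is each $D_{m}^{jk}|_{\widetilde{F}}$. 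Re-expressing the $X^{m}$ through the (constant) inverse matrix of $(a^{j}_{i})$, I obtain $[Y^{j}|_{\widetilde{F}},Y^{k}|_{\widetilde{F}}]=\sum_{n}C_{n}^{jk}\,Y^{n}|_{\widetilde{F}}$ with each $C_{n}^{jk}$ constant on $\widetilde{F}$. Now $Y^{1},\dots,Y^{q}$ is a transverse parallelism of $(M,\mathcal{F})$, so at every point of $\widetilde{F}$ the fields $Y^{q_{\widetilde{F}}+1},\dots,Y^{q}$ project to a basis of a complement of $T\widetilde{F}/T\mathcal{F}$; since the left-hand side is tangent to $\widetilde{F}$ and the $C_{n}^{jk}$ are constant, the coefficients of these transverse generators must vanish identically, i.e.\ $C_{n}^{jk}=0$ for $n>q_{\widetilde{F}}$. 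This proves (i), with constant structure constants $C_{n}^{jk}$ for $1\le j,k,n\le q_{\widetilde{F}}$.

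For (ii), let $\{\eta_{i}\}_{i=1}^{q_{\widetilde{F}}}$ be the basic $1$-forms on $(\widetilde{F},\mathcal{F}|_{\widetilde{F}})$ dual to the transverse parallelism $\{Y^{j}|_{\widetilde{F}}\}_{j=1}^{q_{\widetilde{F}}}$. Since each $d\eta_{i}$ is basic, the Cartan formula together with (i) gives $d\eta_{i}=-\sum_{j<k}C_{i}^{jk}\,\eta_{j}\wedge\eta_{k}$ with constant coefficients. Let $G$ be a simply connected Lie group with $\Lie(G)=\mathfrak{g}$, so that $\{Y^{i}|_{\widetilde{F}}\}$ is a basis of $\Lie(G)$, and set $\Omega=\sum_{i}\eta_{i}\otimes(Y^{i}|_{\widetilde{F}})$. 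Then $\Omega$ is a $\Lie(G)$-valued $1$-form on $\widetilde{F}$ which is surjective at each point and satisfies the Maurer--Cartan equation $d\Omega+\tfrac{1}{2}[\Omega,\Omega]=0$, the latter being precisely the collection of structure equations above. By the converse statement in~(IV) of Section~\ref{Section : Lie foliations}, $\Omega$ is the Maurer--Cartan form of a $G$-Lie foliation whose underlying foliation is $\mathcal{F}|_{\widetilde{F}}$, which proves (ii).

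The crux is the second paragraph: one must ensure that the bracket of two tangent generators stays inside $\mathfrak{g}$ rather than merely inside the full span of all the $Y^{n}|_{\widetilde{F}}$. Condition~(c) is exactly what makes the coefficients $C_{n}^{jk}$ genuinely constant on $\widetilde{F}$, so that the tangency of the bracket forces the transverse components to vanish everywhere simultaneously; without the constancy one could only conclude the vanishing pointwise, which would not produce a Lie algebra with constant structure constants and hence would not identify $\mathcal{F}|_{\widetilde{F}}$ as a Lie foliation.
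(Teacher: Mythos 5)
Your proof is correct and takes essentially the same route as the paper: for (i) you use condition (c) to make the structure functions constant on $\widetilde{F}$ and then use tangency of the bracket together with the parallelism to kill the components along $Y^{n}$, $n>q_{\widetilde{F}}$ (the paper does this with the dual coframe $\zeta_{i}$ of the $Y^{j}$ and checks non-tangency only at $x_{0}$, which suffices by constancy). For (ii) you build the same $\mathfrak{g}$-valued $1$-form from the dual coframe, verify the Maurer--Cartan equation using the constant structure constants, and invoke the converse statement (IV) of Section 3, exactly as the paper does.
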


\begin{proof}
Let us prove (i). We take basic $1$-forms $\zeta_{i}$ by $\zeta_{i}(Y^{j})=\delta_{ij}$, where $\delta_{ij}$ is the Kronecker's delta. By the condition (c), we have
\begin{equation}\label{Equation : omega'}
d\zeta_{i} = \sum_{1 \leq j < k \leq q} c_{i}^{jk}\zeta_{j} \wedge \zeta_{k},
\end{equation}
where $c_{i}^{jk}$ are functions on $M$ whose restrictions to $\widetilde{F}$ are constant. Clearly, we have
\begin{equation}
\zeta_{i}([Y^{j},Y^{k}]) = -2d\zeta_{i}(Y^{j},Y^{k}) = -2c_{i}^{jk}
\end{equation}
For any transverse field $Z$ on $(M, \mathcal{F})$, we have $Z = \sum_{i=1}^{q} \zeta_{i}(Z) Y^{i}$. Hence we have
\begin{equation}\label{Equation : Y}
[Y^{j},Y^{k}] = \sum_{i=1}^{q} \zeta_{i}([Y^{j},Y^{k}]) Y^{i} = -2 \sum_{i=1}^{q} c_{i}^{jk} Y^{i}.
\end{equation}
Recall the notation $q_{\widetilde{F}}=\cod (\widetilde{F},\mathcal{F}|_{\widetilde{F}})$. Consider the case of $1 \leq j < k \leq q_{\widetilde{F}}$. Note that $[Y^{j},Y^{k}]$ is also tangent to $\widetilde{F}$ at every point on $\widetilde{F}$ because $Y^{j}$ and $Y^{k}$ are tangent to $\widetilde{F}$ at every point on $\widetilde{F}$. Then $c_{i}^{jk}$ must be $0$ for $q_{\widetilde{F}}+1 \leq i \leq q$ by \eqref{Equation : Y}, because $Y^{q_{\widetilde{F}}+1}$, $\cdots$, $Y^{q}$ are not tangent to $\widetilde{F}$ at $x_{0}$. Hence $[Y^{j},Y^{k}]$ is contained in $\mathfrak{g}$. Thus (i) is proved.

Let us prove (ii). Here, $\mathfrak{g}$ is a Lie algebra by (i). We define a $\mathfrak{g}$-valued $1$-form $\Omega$ on $\widetilde{F}$ by
\begin{equation}\label{Equation : Omega}
\Omega_{x}((Y^{j})_{x})=Y^{j}
\end{equation}
for every point $x$ on $\widetilde{F}$ and every $j$. For the proof of (ii), it suffices to show that $\Omega$ satisfies the Maurer-Cartan equation $d\Omega + \frac{1}{2}[\Omega,\Omega] = 0$ by (III) and (IV) of Section~\ref{Section : Lie foliations}. This is proved in a way similar to the argument of Theorem 4.24 of Moerdijk and Mr\v{c}un \cite{Moerdijk Mrcun} as follows: For $1 \leq j < k \leq q_{\widetilde{F}}$, we have
\begin{align*}
& \textstyle \, d\Omega(Y^{j},Y^{k}) + \frac{1}{2}[\Omega,\Omega](Y^{j},Y^{k}) \\
\textstyle = & \textstyle \, \frac{1}{2} \big( Y^{j}(\Omega(Y^{k})) - Y^{k}(\Omega(Y^{j})) - \Omega([Y^{j},Y^{k}]) \big) + \frac{1}{2}[\Omega(Y^{j}),\Omega(Y^{k})] \\
\textstyle = & \textstyle \, \frac{1}{2}\big( Y^{j}(\Omega(Y^{k})) - Y^{k}(\Omega(Y^{j})) \big) \\
\textstyle = & \textstyle \, 0. 
\end{align*}
In the second equality, we used the equality $\Omega([Y^{j},Y^{k}])=[\Omega(Y^{j}),\Omega(Y^{k})]$ which follows from the definition of $\Omega$. The last equality follows from the fact that the $\mathfrak{g}$-valued functions $\Omega(Y^{j})$ and $\Omega(Y^{k})$ are constant on $\widetilde{F}$.
\end{proof}

Recall that $\mathcal{F}_{b}$ denotes the foliation of $M$ defined by the fibers of the basic fibration $\pi_{b} \colon M \longrightarrow M/\mathcal{F}_{b}$.

\begin{lemma}\label{Lemma : Fiber of pi tilde}
Assume that the conditions (c) and (d) of Proposition~\ref{Proposition : Fiberwise average} are satisfied. Assume that a basic closed $1$-form $\alpha$ on $(\widetilde{F},\mathcal{F}|_{\widetilde{F}})$ satisfies
\begin{equation}
\int_{\widetilde{F}} \Big( \int_{\gamma_{x}} \alpha \Big) \vol_{\widetilde{F}}(x) = 0
\end{equation}
for the flow $\{\phi_{t}\}_{t \in [0,1]}$ on $\widetilde{F}$ generated by every vector field $X$ such that $\omega_{i}(X)$ is constant on $\widetilde{F}$ for each $i$, where $\gamma_{x}$ is the orbit of $x$ of the flow $\{\phi_{t}\}_{t \in [0,1]}$. Then we have 
\begin{enumerate}
\item $\alpha|_{L_{b}}=0$ for each leaf $L_{b}$ of $(\widetilde{F},\mathcal{F}_{b}|_{\widetilde{F}})$ and
\item $[\alpha]=0$ in $H^{1}(\widetilde{F};\mathbb{R})$.
\end{enumerate}
\end{lemma}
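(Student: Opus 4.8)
The plan is to transport everything onto the structure group of the Lie foliation on $\widetilde{F}$ furnished by Lemma~\ref{Lemma : Condition d 2}, and to read off both conclusions from a single homomorphism attached to $\alpha$. Write $\mathfrak{g}$ for the structural Lie algebra, let $G$ be the associated simply connected Lie group, and let $\dev\colon\widetilde{F}^{\univ}\to G$ and $\hol\colon\pi_{1}\widetilde{F}\to G$ be the developing map and holonomy homomorphism as in (I)--(IV) of Section~\ref{Section : Lie foliations}. Put $H=\hol(\pi_{1}\widetilde{F})$ and let $\overline{H}$ be its closure in $G$. The leaves of $(\widetilde{F},\mathcal{F}_{b}|_{\widetilde{F}})$ are the closures of the leaves of $\mathcal{F}|_{\widetilde{F}}$, hence the fibres of $\pi_{b}|_{\widetilde{F}}\colon\widetilde{F}\to V:=G/\overline{H}$, with $V$ compact. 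Since $\alpha$ is basic and closed, its pullback to $\widetilde{F}^{\univ}$ equals $\dev^{*}\beta$ for a closed $1$-form $\beta$ on the simply connected group $G$; writing $\beta=dh$ and using the $\pi_{1}$-invariance together with \eqref{Equation : dev and hol}, one gets $R_{g}^{*}(dh)=dh$ for every $g\in\overline{H}$, so $h\circ R_{g}-h$ is a constant $c(g)$ and $c\colon\overline{H}\to\mathbb{R}$ is a continuous homomorphism.

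For part~(i) I would use the flows tangent to the leaf closures. Let $\mathfrak{k}=\Lie(\overline{H})$, realised inside $\mathfrak{g}$ as the transverse fields tangent to every fibre $L_{b}$. For $X\in\mathfrak{k}$ the flow $\{\phi_{t}\}$ preserves each $L_{b}$, and the displayed relation for $h$ shows that $\alpha(X)=\iota_{X}\alpha$ equals the constant $dc_{e}(X)$ on all of $\widetilde{F}$. Hence $\int_{\gamma_{x}}\alpha=dc_{e}(X)$ is independent of $x$, and the hypothesis forces $dc_{e}(X)\cdot\vol(\widetilde{F})=0$, so $dc_{e}=0$ on $\mathfrak{k}$; thus $c$ is trivial on $\overline{H}_{0}$. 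This is exactly the vanishing of $dh$ restricted to an $\overline{H}$-orbit, i.e. $\alpha|_{L_{b}}=0$ for every leaf $L_{b}$, which is~(i). In particular $\alpha$ descends to a closed $1$-form $\bar{\alpha}$ on $V$ with $\alpha=(\pi_{b}|_{\widetilde{F}})^{*}\bar{\alpha}$.

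For part~(ii), since $\pi_{b}|_{\widetilde{F}}$ has connected fibres, the map $(\pi_{b}|_{\widetilde{F}})^{*}\colon H^{1}(V;\mathbb{R})\to H^{1}(\widetilde{F};\mathbb{R})$ is injective, so it suffices to prove $[\bar{\alpha}]=0$ in $H^{1}(V;\mathbb{R})$. First I would upgrade the hypothesis by rescaling: for $X\in\mathfrak{g}$ the field $sX$ is again admissible, and reparametrising its flow turns the hypothesis into $\int_{\widetilde{F}}\bigl(\int_{0}^{s}\alpha(X)\circ\phi_{u}\,du\bigr)\vol=0$ for all $s$; differentiating at $s=0$ gives $\int_{\widetilde{F}}\alpha(X)\,\vol=0$ for every $X\in\mathfrak{g}$. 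Using~(i) to descend, and the constancy of the fibre volume $\vol(L_{b})$ (a Molino-theoretic fact for the basic fibration, parallel to Lemma~\ref{Lemma : Constant volume}), these identities push down to $\int_{V}\iota_{\bar{X}}\bar{\alpha}\,\vol_{V}=0$ for every fundamental vector field $\bar{X}$ of the transitive $G$-action on $V$. The plan is then to detect $[\bar{\alpha}]$ by Poincaré duality on the compact homogeneous space $V$: when $\vol_{V}$ is $G$-invariant the forms $\iota_{\bar{X}}\vol_{V}$ are closed, the pairings $\int_{V}\bar{\alpha}\wedge\iota_{\bar{X}}\vol_{V}=\pm\int_{V}\iota_{\bar{X}}\bar{\alpha}\,\vol_{V}$ all vanish, and since the classes $[\iota_{\bar{X}}\vol_{V}]$ span $H^{\dim V-1}(V;\mathbb{R})$ one concludes $[\bar{\alpha}]=0$.

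The main obstacle is precisely this last step, because the transverse volume $\vol_{V}$ on $G/\overline{H}$ need not be $G$-invariant; its invariance is governed by the modular functions of $G$ and $\overline{H}$, the same unimodularity phenomenon underlying Proposition~\ref{prop : GLiefoliation}. The hypothesis does supply more than the $s=0$ derivative, namely $\int_{V}(\iota_{\bar{X}}\bar{\alpha})\circ\bar{\phi}_{s}\,\vol_{V}=0$ for all $s$, equivalently $\int_{V}\iota_{\bar{X}}\bar{\alpha}\;\bar{\phi}_{-s}^{*}\vol_{V}=0$, and the genuine work is to average these translated densities (a F\o{}lner-type argument exploiting amenability of the relevant group) to manufacture an invariant volume, or else to realise enough loops of $V$ as closed orbits of admissible flows so that the hypothesis directly annihilates every period $c(\hol\gamma)$. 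I expect this averaging-and-period step on the homogeneous base to be the crux, with part~(i) and the reduction above being comparatively formal.
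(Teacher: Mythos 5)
Your setup --- the developing map, the holonomy closure $\overline{H}$, the primitive $h$ of the induced closed form on the simply connected group $G$, and the homomorphism $c\colon\overline{H}\to\mathbb{R}$, $c(g)=h\circ R_{g}-h$ --- is the same as the paper's, but both halves of your argument break at the same point, and it is exactly the point where the paper's proof does its real work. The relation defining $c$ concerns \emph{right} translations by elements of $\overline{H}$, whereas the flows admissible in the hypothesis (fields on $\widetilde{F}$ with $\omega_{i}(X)$ constant on $\widetilde{F}$) are forced, by equivariance under the deck action $\dev(\gamma\cdot x)=\dev(x)\cdot\hol(\gamma)$ and by the invariance of the $\omega_{i}$, to project under $\dev$ to translation flows on the \emph{other} side of the group, $g\mapsto\exp(t\xi)\,g$. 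Hence for such a flow $\int_{\gamma_{x}}\alpha=h(\exp(\xi)g_{x})-h(g_{x})$, and this is guaranteed to equal $c(\exp\xi)$ (your $dc_{e}(X)$) only when $g_{x}\in\overline{H}$, i.e.\ only for $x$ in the single leaf closure $F_{b}$ through the base point. Your claims that the flow of $X\in\mathfrak{k}$ preserves \emph{every} leaf closure and that $\alpha(X)\equiv dc_{e}(X)$ on all of $\widetilde{F}$ amount to requiring $\mathrm{Ad}(g)\mathfrak{k}=\mathfrak{k}$ for all $g\in G$, i.e.\ that $\overline{H}$ be normal in $G$; this does not follow from conditions (c) and (d). For instance, the suspension of a homomorphism $\mathbb{Z}^{2}\to SU(2)$ with image dense in a maximal torus is a Lie foliation on a closed manifold satisfying (c) and (d) (with $\pi_{\widetilde{\mathcal{F}}}$ the map to a point), and its $\overline{H}$ is a non-normal circle. (If instead you shrink $\mathfrak{k}$ to the transverse fields genuinely tangent to all fibres, you only obtain the largest ideal of $\mathfrak{g}$ contained in $\mathfrak{k}$, and triviality of $c$ there does not give (i).) Since the hypothesis only provides $\int_{\widetilde{F}}\bigl(\int_{\gamma_{x}}\alpha\bigr)\vol_{\widetilde{F}}=0$, knowing the integrand on $F_{b}$ alone yields nothing: what is needed is that the period function $x\mapsto\int_{\gamma_{x}}\alpha$ is constant on \emph{all} of $\widetilde{F}$, which is precisely the content of \eqref{Equation : F 3} and \eqref{Equation : F 4} in the paper.

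For part (ii) you identify this crux honestly but leave it open, and the two repairs you float are not available: $G$ is an arbitrary simply connected Lie group, so there is no amenability to exploit, and a $G$-invariant volume on $V=G/\overline{H}$ exists only under unimodularity-type hypotheses --- by Proposition~\ref{prop : GLiefoliation} that is essentially the dichotomy the whole theorem is about, so it cannot be presupposed. The paper's resolution uses no invariant measure at all; it is homological. Using (i) and the density of leaves in leaf closures, $\alpha$ is shown to be basic with respect to $\mathcal{F}_{b}$, hence pulled back from $\widetilde{F}/\mathcal{F}_{b}\cong G/\overline{H}$; every transported path $\gamma_{x}$ projects to a \emph{closed} loop there, because the time-one map of the projected flow is the identity ($\hol(\gamma)\in\overline{H}$ acts trivially on cosets); and these loops are mutually homotopic as $x$ varies, being joined by the family $\{\gamma_{l(s)}\}$ along any path $l$ from $x$ to $y$. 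Consequently each period $\int_{\gamma_{x}}\alpha$ is the pairing of one fixed cohomology class with one fixed homology class, independent of $x$, and the averaged hypothesis then forces it to vanish. This step --- trading the missing invariant measure for constancy of a homological pairing on $\widetilde{F}/\mathcal{F}_{b}$ --- is the idea absent from your proposal, and without it neither (i) nor (ii) closes.
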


\begin{proof}
Let us show (i) in the case where $L_{b}$ is the leaf $F_{b}$ of $(\widetilde{F},\mathcal{F}_{b}|_{\widetilde{F}})$ which contains $x_{0}$. The proof of the general case is similar.

Here $(\widetilde{F},\mathcal{F}|_{\widetilde{F}})$ is a Lie foliation by Lemma~\ref{Lemma : Condition d 2} (ii). We take a connected Lie group $G$ so that $(\widetilde{F},\mathcal{F}|_{\widetilde{F}})$ is a $G$-Lie foliation. We can assume the simply connectedness of $G$ as noted in the second paragraph of Section~\ref{Section : Lie foliations}.

Let $p_{\widetilde{F}}^{\univ} \colon \widetilde{F}^{\univ} \longrightarrow \widetilde{F}$ be the universal covering of $\widetilde{F}$. Fix a point $x_{0}^{\univ}$ in the fiber over $x_{0}$. Let $\dev \colon \widetilde{F} \longrightarrow G$ be the developing map of the Lie foliation $(\widetilde{F},\mathcal{F}|_{\widetilde{F}})$ which maps $x_{0}^{\univ}$ to the unit element $e$ of $G$. Let $\hol$ be the holonomy homomorphism $\pi_{1}(\widetilde{F},x_{0}) \longrightarrow G$ of the Lie foliation $(\widetilde{F},\mathcal{F}|_{\widetilde{F}})$. By (I) of Section~\ref{Section : Lie foliations}, every basic $1$-form on $( \widetilde{F}^{\univ},(p_{\widetilde{F}}^{\univ})^{*}(\mathcal{F}|_{\widetilde{F}}) )$ is the pullback of a $1$-form on $G$ by $\dev$. Hence there exists a $1$-form $\overline{\alpha}$ on $G$ such that
\begin{equation}\label{Equation : Pull back}
(p_{\widetilde{F}}^{\univ})^{*} \alpha = \dev^{*} \overline{\alpha}.
\end{equation}
By (II) of Section~\ref{Section : Lie foliations} and the invariance of $(p_{\widetilde{F}}^{\univ})^{*} \alpha$ under the $\pi_{1}(\widetilde{F},x_{0})$-action on $\widetilde{F}^{\univ}$, we have
\begin{equation}\label{Equation : right invariance 1}
R_{\hol(\gamma)}^{*} \overline{\alpha} = \overline{\alpha}
\end{equation}
for $\gamma$ in $\pi_{1}(\widetilde{F},x_{0})$, where $R_{\hol(\gamma)} \colon G \longrightarrow G$ is the right multiplication map by $\hol(\gamma)$. Let $H$ be the closure of the image of $\hol$ in $G$. Note that $H$ is a proper subgroup of $G$ if the leaves of $(\widetilde{F},\mathcal{F}|_{\widetilde{F}})$ are not dense by (II) in Section~\ref{Section : Lie foliations}. It follows from \eqref{Equation : right invariance 1} that 
\begin{equation}\label{Equation : right invariance 2}
R_{g}^{*} \overline{\alpha} = \overline{\alpha}
\end{equation}
for every $g$ in $H$.

In the sequel, for a path $\gamma$ in $\widetilde{F}$, we denote a lift of $\gamma$ to $\widetilde{F}^{\univ}$ by $\gamma^{\univ}$. By \eqref{Equation : right invariance 2}, we have
\begin{equation}\label{Equation : Integration 1}
\int_{\gamma} \alpha = \int_{\gamma^{\univ}} (p_{\widetilde{F}}^{\univ})^{*} \alpha = \int_{\gamma^{\univ}} \dev^{*} \overline{\alpha} = \int_{\dev_{*}\gamma^{\univ}} \overline{\alpha}.
\end{equation}

Let $\gamma$ be a closed path on $\widetilde{F}$ whose endpoints are $x_{0}$. We denote the element of $\pi_{1}(\widetilde{F},x_{0})$ represented by $\gamma$ by the same symbol $\gamma$. Let $\overline{X}_{\gamma}$ be the left invariant vector field on $G$ such that $\exp \overline{X}_{\gamma} = \hol(\gamma)$. Let $X^{\univ}_{\gamma}$ be a lift of $\overline{X}_{\gamma}$ to $\widetilde{F}^{\univ}$ which is invariant by the action of $\hol(\pi_{1}(\widetilde{F},x_{0}))$. Let $X_{\gamma}$ be the vector field on $\widetilde{F}$ whose lift to $\widetilde{F}^{\univ}$ is $X^{\univ}_{\gamma}$. Let $\gamma_{x}$ be the orbit of $x$ in $\widetilde{F}$ of the flow $\{\phi_{t}\}_{t \in [0,1]}$ generated by $X_{\gamma}$. It follows that $\dev_{*}\gamma_{x}^{\univ}$ is an orbit of the flow generated by $\overline{X}_{\gamma}$ from the definition of $\overline{X}_{\gamma}$ and $\gamma_{x}$. Hence we have
\begin{equation}\label{Equation : dev gamma univ}
\dev_{*}\gamma_{x}^{\univ}(t) = \dev_{*} \gamma_{x}^{\univ}(0) \cdot \exp (t\overline{X}_{\gamma}).
\end{equation}
We take the lifts $\gamma^{\univ}$ and $\gamma_{x_{0}}^{\univ}$ of $\gamma$ and $\gamma_{x_{0}}$ to $\widetilde{F}^{\univ}$ so that $\gamma^{\univ}(0)=\gamma^{\univ}_{x_{0}}(0)=x^{\univ}_{0}$, respectively. Then, by \eqref{Equation : dev and hol} and \eqref{Equation : dev gamma univ}, we have 
\begin{align}\label{Equation : Closing 1}
\dev_{*} \gamma_{x_{0}}^{\univ}(0) & = e = \dev_{*} \gamma^{\univ}(0), \\
\dev_{*} \gamma_{x_{0}}^{\univ}(1) & = \hol(\gamma) = \dev_{*} \gamma^{\univ}(1). \label{Equation : Closing 2}
\end{align}
\noindent Since $G$ is simply connected, \eqref{Equation : Integration 1}, \eqref{Equation : Closing 1}, \eqref{Equation : Closing 2} and the Stokes theorem imply
\begin{equation}\label{Equation : F 2}
\int_{\gamma} \alpha = \int_{\dev_{*}\gamma^{\univ}} \overline{\alpha} = \int_{\dev_{*}\gamma_{x_{0}}^{\univ}} \overline{\alpha} = \int_{\gamma_{x_{0}}} \alpha.\end{equation}

It follows from \eqref{Equation : dev gamma univ} and $\dev(\gamma_{x_{0}}^{\univ}(0))=e$ that
\begin{equation}\label{Equation : gamma x and gamma x_0}
\dev_{*}\gamma_{x}^{\univ} = (R_{\dev(\gamma_{x}^{\univ}(0)) \cdot \dev(\gamma_{x_{0}}^{\univ}(0))^{-1}})_{*} \dev_{*}\gamma_{x_{0}}^{\univ} = (R_{\dev(\gamma_{x}^{\univ}(0))})_{*} \dev_{*}\gamma_{x_{0}}^{\univ}.
\end{equation}
If $\gamma$ and $x$ are contained in $F_{b}$, then we can take $\gamma_{x}^{\univ}$ so that $\dev(\gamma_{x}^{\univ}(0))$ is contained in $H$. By using \eqref{Equation : Integration 1}, \eqref{Equation : gamma x and gamma x_0}, \eqref{Equation : right invariance 2} and \eqref{Equation : Integration 1}, in this order, we have
\begin{equation}\label{Equation : F 3}
\int_{\gamma_{x}} \alpha = \int_{\dev_{*}\gamma_{x}^{\univ}} \overline{\alpha}  = \int_{(R_{\dev(\gamma_{x}^{\univ}(0))})_{*} \dev_{*}\gamma_{x_{0}}^{\univ}} \overline{\alpha} = \int_{\dev_{*}\gamma_{x_{0}}^{\univ}} \overline{\alpha} = \int_{\gamma_{x_{0}}} \alpha.
\end{equation}

Since $X_{\gamma}$ satisfies the condition of $X$ in the statement of Lemma~\ref{Lemma : Fiber of pi tilde}, we have
\begin{equation}\label{Equation : F 1}
\int_{\widetilde{F}} \Big( \int_{\gamma_{x}} \alpha \Big) \vol_{\widetilde{F}}(x) = 0
\end{equation}
by the assumption. 

By \eqref{Equation : F 2}, \eqref{Equation : F 3} and \eqref{Equation : F 1}, we have
\begin{equation}
\int_{\gamma} \alpha = 0.
\end{equation}
Then $\alpha|_{F_{b}}$ is exact. Hence there exists a basic function $h$ on $(F_{b},\mathcal{F}|_{F_{b}})$ such that $dh=\alpha|_{F_{b}}$. But since the leaves of $(F_{b},\mathcal{F}|_{F_{b}})$ are dense, $h$ is constant. Then we have $\alpha=dh=0$, which completes the proof of (i).

In the following five paragraphs, we will show
\begin{equation}\label{Equation : F 4}
\int_{\gamma_{x}} \alpha = \int_{\gamma_{x_{0}}} \alpha
\end{equation}
for every point $x$ on $\widetilde{F}$.

Let us show that $[\alpha]$ in $H^{1}(\widetilde{F};\mathbb{R})$ is contained in the image of $(\pi_{b}|_{\widetilde{F}})^{*} \colon H^{1}(\widetilde{F}/\mathcal{F}_{b};\mathbb{R}) \longrightarrow H^{1}(\widetilde{F};\mathbb{R})$, where $\pi_{b} \colon M \longrightarrow M/\mathcal{F}_{b}$ is the basic fibration of $(M,\mathcal{F})$. Since $\alpha$ is basic with respect to $\mathcal{F}$, we have $\phi^{*}\alpha=\alpha$ for a diffeomorphism $\phi$ which maps each leaf of $\mathcal{F}$ to itself. Each leaf $L$ of $\mathcal{F}$ is dense in the leaf $L_{b}$ of $\mathcal{F}_{b}|_{\widetilde{F}}$ which contains $L$. Hence the orbits of the group of diffeomorphisms which map each leaf of $\mathcal{F}$ to itself is dense in $L_{b}$. So we have $(L_{Y}\alpha)_{x}=0$ for every point $x$ on $M$ and every $Y$ in $T_{x}M$ tangent to $L_{b}$. Since $\alpha|_{L_{b}}$ is zero by (i), $\alpha$ is basic with respect to $\mathcal{F}_{b}$. Thus $[\alpha]$ in $H^{1}(\widetilde{F};\mathbb{R})$ is contained in $(\pi_{b}|_{\widetilde{F}})^{*}(H^{1}(\widetilde{F}/\mathcal{F}_{b};\mathbb{R}))$ in $H^{1}(\widetilde{F};\mathbb{R})$.

The path $\gamma_{x}$ may not be closed in general. But let us show that $(\pi_{b})_{*}\gamma_{x}$ is closed, where $\pi_{b} \colon M \longrightarrow M/\mathcal{F}_{b}$ is the basic fibration of $(M,\mathcal{F})$. Let $H$ be the Lie subgroup of $G$ defined by the closure of $\hol(\pi_{1}(\widetilde{F},x_{0}))$. The structural Lie algebra of the Lie foliation $(F_{b},\mathcal{F}|_{F_{b}})$ is $\Lie(H)$, and hence $\dim H = \cod (F_{b},\mathcal{F}|_{F_{b}})$. By the equivariance of the developing map in \eqref{Equation : dev and hol}, the map $\dev \colon \widetilde{F}^{\univ} \longrightarrow G$ induces a map $\dev_{G/H} \colon \widetilde{F} \longrightarrow G/H$. Furthermore, $\dev_{G/H}$ induces a map $\varpi \colon \widetilde{F}/\mathcal{F}_{b} \longrightarrow G/H$. Since $\varpi$ is a submersion between two manifolds of the same dimension, $\varpi$ is a covering map. Since $\varpi$ is injective as easily confirmed, $\varpi$ is a diffeomorphism.

Let $X_{b}$ be a vector field on $\widetilde{F}/\mathcal{F}_{b}$ induced from $X_{\gamma}$. Let $\overline{X}_{G/H}$ be a vector field on $G/H$ induced from a vector field $\overline{X}_{\gamma}$ on $G$. By definition of $X_{\gamma}$ and $\overline{X}_{\gamma}$, we have
\begin{equation}\label{Equation : Vector fields}
\varpi_{*}\overline{X}_{G/H} = X_{b}.
\end{equation}
Recall that $\gamma_{x}$ is the orbit of $x$ of the flow $\{\phi_{t}\}_{0 \leq t \leq 1}$ generated by $X_{\gamma}$ from time zero to time one. Thus $(\pi_{b})_{*}\gamma_{x}$ is the orbit of $x$ of the flow generated by $X_{b}$ from time zero to time one. By \eqref{Equation : Vector fields}, $\varpi$ maps an orbit of the flow generated by $X_{b}$ from time zero to time one to an orbit of the flow on $G/H$ generated by the vector field $\overline{X}_{G/H}$ from time zero to time one. Here the time one map of the flow generated by $\overline{X}_{G/H}$ is the identity, because this map is induced by the time one map of the flow on $G$ generated by $\overline{X}_{\gamma}$, which is the right multiplication map by an element of $\hol(\pi_{1}(\widetilde{F},x_{0}))$ by definition of $\overline{X}_{\gamma}$. Hence $(\pi_{b})_{*}\gamma_{x}$ is a closed path on $\widetilde{F}/\mathcal{F}_{b}$ for each $x$.

The homology class determined by $(\pi_{b})_{*}\gamma_{x}$ in $\widetilde{F}/\mathcal{F}_{b}$ is independent of $x$. This is because $\gamma_{x}$ and $\gamma_{y}$ are bounded by a $1$-parameter family of closed paths on $\widetilde{F}/\mathcal{F}_{b}$ of the form $\{\gamma_{l(s)}\}_{0 \leq s \leq 1}$, where $l$ is a path on $\widetilde{F}$ such that $l(0)=x$ and $l(1)=y$ for every two points $x$ and $y$ in $\widetilde{F}/\mathcal{F}_{b}$.

Thus, by the argument in the previous three paragraphs, $[\alpha]$ is contained in $(\pi_{b}|_{\widetilde{F}})^{*}(H^{1}(\widetilde{F}/\mathcal{F}_{b};\mathbb{R}))$, and $(\pi_{b})_{*}\gamma_{x}$ determines the same homology class of $\widetilde{F}/\mathcal{F}_{b}$ for every $x$. Hence \eqref{Equation : F 4} is proved.

By \eqref{Equation : F 2}, \eqref{Equation : F 1} and \eqref{Equation : F 4}, we have
\begin{equation}
\int_{\gamma} \alpha = 0.
\end{equation}
Hence (ii) is proved.
\end{proof}

\subsection{Two lemmas on a fiber bundle over $S^1$ with fiberwise Lie foliations}

We will show two lemmas to use in the next section. Note that $(\widetilde{F},\mathcal{F}_{b}|_{\widetilde{F}})$ is transversely orientable by the assumption of the orientability of both of $\widetilde{\mathcal{F}}$ and the basic fibration $\pi_{b} \colon M \longrightarrow M/\mathcal{F}_{b}$ of $(M,\mathcal{F})$.

\begin{lemma}\label{Lemma : Existence of nabla}
Assume that the conditions (c) and (d) of Proposition~\ref{Proposition : Fiberwise average} are satisfied. Let $\gamma \colon S^1 \longrightarrow M$ be a smooth embedding in $M$. Assume that $\gamma$ is transverse to the fibers of $\pi_{\widetilde{\mathcal{F}}}$, and that $\pi_{\widetilde{\mathcal{F}}} \circ \gamma$ is an embedding. Let $M' = \pi_{\widetilde{\mathcal{F}}}^{-1}(\pi_{\widetilde{\mathcal{F}}}(S^1))$. Then there exists a flat connection $\nabla$ on $\pi_{\widetilde{\mathcal{F}}}|_{M'}$ which satisfies the following four conditions:
\begin{description}
\item[(A)] The holonomy map $f \colon \widetilde{F} \longrightarrow \widetilde{F}$ of $\nabla$ preserves the foliation $\mathcal{F}_{b}|_{\widetilde{F}}$.
\item[(B)] The holonomy map $f$ of $\nabla$ preserves a transverse volume form  $\mu_{\widetilde{F}/\mathcal{F}_{b}}$ of $(\widetilde{F},\mathcal{F}_{b}|_{\widetilde{F}})$.
\item[(C)] We denote the inverse map $\pi_{\widetilde{\mathcal{F}}} \circ \gamma(S^1) \longrightarrow S^1$ of $\pi_{\widetilde{\mathcal{F}}} \circ \gamma$ by $\varphi$. We define a section $\gamma_{1}$ of $\pi_{\widetilde{\mathcal{F}}}|_{M'}$ by $\gamma_{1} = \gamma \circ \varphi$. Then $\gamma_{1}$ is a section parallel to $\nabla$.
\item[(D)] There exists a vector field $Z_{\nabla}$ such that the orbits of the flow generated by $Z_{\nabla}$ is parallel to $\nabla$ and the restriction of $\omega_{i}(Z_{\nabla})$ to each fiber of $\pi_{\widetilde{\mathcal{F}}}$ is constant.
\end{description}
\end{lemma}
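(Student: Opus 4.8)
The plan is to produce a single vector field $Z_{\nabla}$ on $M'$ that is transverse to the fibres of $\pi_{\widetilde{\mathcal{F}}}$ and covers the tangent direction of the circle $C=\pi_{\widetilde{\mathcal{F}}}(\gamma(S^1))$, and to let $\nabla$ be the connection, necessarily flat since $\dim S^1=1$, whose horizontal lines are spanned by $Z_{\nabla}$. The organising remark is a \emph{decoupling}: condition (C) constrains $Z_{\nabla}$ along $\gamma$ only through the full velocity $\dot{\gamma}$, whereas (A) and (B) are properties of the holonomy read off modulo $\mathcal{F}_{b}$. I will therefore build $Z_{\nabla}$ in two layers, a clean $\mathcal{F}_{b}$-foliate part governing (A), (B), (D) and a correction tangent to $\mathcal{F}$ fixing (C), and observe that the correction is invisible to the holonomy on $\widetilde{F}/\mathcal{F}_{b}$.

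For the first layer, realise the transverse parallelism $\{X^{j}\}$ by foliate vector fields on $M$, so that $\omega_{i}(X^{j})=\delta_{ij}$, and set $Z_{0}=\sum_{i=1}^{q} b_{i} X^{i}$, where $b_{i}$ is the pullback by $\pi_{\widetilde{\mathcal{F}}}$ of the function on $C$ that equals $\omega_{i}(\dot{\gamma})$ under the identification $\varphi$. Since $b_{i}$ is constant on the fibres of $\pi_{\widetilde{\mathcal{F}}}$, it is basic for $\mathcal{F}$, for $\mathcal{F}_{b}$ and for $\widetilde{\mathcal{F}}$; hence $Z_{0}$ is foliate for all three foliations and $\omega_{i}(Z_{0})=b_{i}$ is fibrewise constant. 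Because $\pi_{\widetilde{\mathcal{F}}}\circ\gamma$ is an embedding, the transverse field $\proj_{\widetilde{\mathcal{F}}}Z_{0}$ restricts on $C$ to the nonvanishing tangent field of $C$; as it is fibrewise constant this tangency holds over all of $M'$, so $Z_{0}$ is tangent to $M'$ and transverse to the fibres.

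For the second layer and the connection, note that the forms $\omega_{i}$ annihilate $T\mathcal{F}$, so along $\gamma$ we have $\dot{\gamma}-Z_{0}|_{\gamma}\in T\mathcal{F}$. Choose a smooth section $\xi$ of $T\mathcal{F}|_{M'}$ with $\xi|_{\gamma}=\dot{\gamma}-Z_{0}|_{\gamma}$ and put $Z_{\nabla}=Z_{0}+\xi$. Then $\gamma$ is an integral curve of $Z_{\nabla}$, which gives (C), while $\omega_{i}(Z_{\nabla})=\omega_{i}(Z_{0})=b_{i}$ remains fibrewise constant, which together with the definition of $\nabla$ gives (D). Since $\xi$ is tangent to $\mathcal{F}_{b}$ and $\mathcal{F}_{b}$ is involutive, $Z_{\nabla}$ is still $\mathcal{F}_{b}$-foliate and projects to the \emph{same} vector field $\overline{Z}_{0}$ on $W'=M'/\mathcal{F}_{b}$ as $Z_{0}$ does; this is the decoupling.

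Finally, for (A) and (B), observe that as $Z_{\nabla}$ is $\mathcal{F}_{b}$-foliate its flow carries leaves of $\mathcal{F}_{b}$ to leaves of $\mathcal{F}_{b}$, so the holonomy $f$ preserves $\mathcal{F}_{b}|_{\widetilde{F}}$, giving (A). Hence $f$ descends to a diffeomorphism $\overline{f}$ of $\widetilde{F}/\mathcal{F}_{b}$, which by the decoupling is the holonomy of $\overline{Z}_{0}$ on the bundle $W'\to S^1$ with fibre $\widetilde{F}/\mathcal{F}_{b}$. Using that $(\widetilde{F},\mathcal{F}|_{\widetilde{F}})$ is a $G$-Lie foliation by Lemma~\ref{Lemma : Condition d 2} with $\widetilde{F}/\mathcal{F}_{b}\cong G/H$, the developing map identifies the $X^{i}$ with the infinitesimal generators of the left $G$-action on $G/H$; the parallel-transport equation for $\overline{Z}_{0}$ is then a left-invariant ordinary differential equation on $G$, whose time-one map is left translation by a single $g_{0}\in G$, so $\overline{f}=L_{g_{0}}$. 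Since $\widetilde{F}$ is compact, $G/H$ is a compact homogeneous space and carries a $G$-invariant volume form $\mu$, which $L_{g_{0}}=\overline{f}$ preserves; transported by $\widetilde{F}/\mathcal{F}_{b}\cong G/H$ this yields a transverse volume form $\mu_{\widetilde{F}/\mathcal{F}_{b}}$ preserved by $f$, giving (B). I expect the main obstacle to be this last step, namely matching the holonomy of the flat connection with the translation structure furnished by the developing map and invoking an invariant volume form on the compact homogeneous space $G/H$; the decoupling, which ensures the $\mathcal{F}$-correction forced by (C) does not perturb the holonomy relevant to (A) and (B), is what keeps the construction consistent.
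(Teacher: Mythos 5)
Your construction of $Z_{\nabla}$ and $\nabla$, together with your verification of (A), (C) and (D), is essentially identical to the paper's: the paper also writes the velocity of $\gamma_{1}$ as a leafwise vector plus $\sum_{j}h_{j}(t)\hat{X}^{j}$, pulls the coefficient functions back by $\pi_{\widetilde{\mathcal{F}}}$, corrects by a vector field tangent to $\mathcal{F}$, takes $\nabla$ to be the line field spanned by $Z_{\nabla}$ (flat for free over $S^{1}$), and deduces (A) from the fact that $Z_{\nabla}$ is foliate for $\mathcal{F}$ and hence for $\mathcal{F}_{b}$.

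The gap is in your treatment of (B). The claim that the compact homogeneous space $G/H$ carries a $G$-invariant volume form is false in general: a compact quotient $G/H$ admits a $G$-invariant volume form only when the modular functions satisfy $\Delta_{G}|_{H}=\Delta_{H}$, and this can fail. For instance, for $G=\SL(2,\mathbb{R})$ and $H$ a Borel subgroup, $G/H\cong S^{1}$ (the projective line) is a compact homogeneous space admitting no volume form, nor even a finite measure, invariant under the projective $G$-action. Nothing in your argument excludes such pairs $(G,H)$, so the step you yourself flagged as the main obstacle is indeed unjustified. A second, related problem is the identification $\overline{f}=L_{g_{0}}$: condition (c) only makes the structure functions $c_{i}^{jk}$ constant on each fiber of $\pi_{\widetilde{\mathcal{F}}}$, not constant on $M'$, so transporting the parallelism around the loop returns it to itself only up to a Lie algebra automorphism of $\mathfrak{g}$; the holonomy is therefore a left translation only up to composition with a (possibly outer) automorphism of $G/H$, which need not preserve an invariant volume even when one exists. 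The paper's proof of (B) avoids both difficulties by a weaker but sufficient mechanism: it builds a transverse volume form $\mu_{\widetilde{F}/\mathcal{F}_{b}}=\beta_{1}\wedge\cdots\wedge\beta_{q_{b,\widetilde{F}}}$ from constant-coefficient combinations of the $\omega_{i}|_{\widetilde{F}}$ supplied by the Lie foliation structure, uses condition (c) only to show that $L_{Z_{\nabla}}\mu_{\widetilde{F}/\mathcal{F}_{b}}$ is a fiberwise-constant multiple of $\mu_{\widetilde{F}/\mathcal{F}_{b}}$, integrates this to $f^{*}\mu_{\widetilde{F}/\mathcal{F}_{b}}=C_{1}\mu_{\widetilde{F}/\mathcal{F}_{b}}$ with $C_{1}>0$ constant, and then forces $C_{1}=1$ because $C_{1}$ equals the mapping degree of the induced diffeomorphism $\overline{f}$ of the closed oriented manifold $\widetilde{F}/\mathcal{F}_{b}$. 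Relative invariance plus this degree argument on the compact quotient is what substitutes for the generally nonexistent invariant volume; your proof can be repaired by replacing its final paragraph with that mechanism.
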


\begin{proof}
Let $\hat{X}^{j}$ be a section of $TM|_{M'}$ on $M'$ such that $\hat{X}^{j}$ is projected to $X^{j}$ by the canonical projection $C^{\infty}(TM|_{M'}) \longrightarrow C^{\infty}( (TM/T\mathcal{F})|_{M'})$ for $1 \leq j \leq q$. There exists a vector field $Y$ tangent to $\mathcal{F}$ defined on $\gamma_{1}(S^1)$ and functions $h_{1}$, $h_{2}$, $\cdots$, $h_{q}$ on $S^1$ such that 
\begin{equation}\label{Equation : Tangent vectors}
(D\gamma_{1})_{t} \left( \frac{\partial}{\partial t} \right) = Y_{\gamma_{1}(t)} + \sum_{j=1}^{q} h_{j}(t)(\hat{X}^{j})_{\gamma_{1}(t)},
\end{equation}
where $(D\gamma_{1})_{t}$ is the differential map of $\gamma_{1}$ at a point $t$. Let $Y'$ be a vector field on $M'$ which is tangent to $\mathcal{F}$ and whose restriction to $\gamma_{1}(S^1)$ is equal to $Y$. We define a vector field $Z_{\nabla}$ on $M'$ by
\begin{equation}
Z_{\nabla} = Y' + \sum_{j=1}^{q} ((\pi_{\widetilde{\mathcal{F}}}|_{M'})^{*}h_{j}) \hat{X}^{j}.
\end{equation}
The restriction of $Z_{\nabla}$ to $\gamma_{1}(S^1)$ is equal to the tangent vectors of $\gamma_{1}$ by \eqref{Equation : Tangent vectors}. $Z_{\nabla}$ is basic with respect to $\mathcal{F}$ and transverse to the fibers of $\pi_{\widetilde{\mathcal{F}}}$. We define a connection $\nabla$ on $\pi_{\widetilde{\mathcal{F}}}$ by the line field tangent to $Z_{\nabla}$ at each point on $M'$. It is trivial that $\nabla$ is flat, because every connection on a fiber bundle over $S^1$ is flat.

Let us show that $\nabla$ satisfies the conditions (A), (B), (C) and (D). Here, $\hat{X}^{j}$ is basic with respect to $\widetilde{\mathcal{F}}|_{M'}$ by the condition (d) of Proposition~\ref{Proposition : Fiberwise average}. Hence $Z_{\nabla}$ is also basic with respect to $\widetilde{\mathcal{F}}|_{M'}$ by definition. On a foliated manifold, the flow generated by a basic vector field maps leaves to leaves by Proposition 2.2 of Molino \cite{Molino}. Then the flow generated by $Z_{\nabla}$ also maps fibers of $\pi_{\widetilde{\mathcal{F}}}|_{M'}$ to fibers of $\pi_{\widetilde{\mathcal{F}}}|_{M'}$. The time one map of the flow generated by $Z_{\nabla}$ maps $\widetilde{F}$ to $\widetilde{F}$ itself. Since the orbits of the flow generated by $Z_{\nabla}$ are parallel to $\nabla$ by definition, the time one map of the flow generated by $Z_{\nabla}$ is the holonomy of $\nabla$. This proves that $\nabla$ satisfies the condition (D). Since the restriction of $Z_{\nabla}$ to $\gamma_{1}(S^1)$ is equal to the tangent vectors of $\gamma_{1}$, the condition (C) is satisfied. Since $Z_{\nabla}$ is basic with respect to $\mathcal{F}$, the flow generated by $Z_{\nabla}$ maps the leaves of $\mathcal{F}$ to the leaves of $\mathcal{F}$. Since the leaves of $\mathcal{F}_{b}$ are the closures of the leaves of $\mathcal{F}$, the flow generated by $Z_{\nabla}$ maps the leaves of $\mathcal{F}_{b}$ to the leaves of $\mathcal{F}_{b}$. Hence $f$ satisfies the condition (A). By the conditions (c), (d) of Proposition~\ref{Proposition : Fiberwise average} and Lemma~\ref{Lemma : Condition d 2} (ii), $(\widetilde{F},\mathcal{F}|_{\widetilde{F}})$ is a Lie foliation. Let $G$ be a connected Lie group such that $(\widetilde{F},\mathcal{F}|_{\widetilde{F}})$ is a $G$-Lie foliation. We can assume the simply connectedness of $G$ as noted in the second paragraph of Section~\ref{Section : Lie foliations}. Let $H$ be the Lie subgroup of $G$ such that $\Lie(H)$ is the structural Lie algebra of the Lie foliation $(F_{b},\mathcal{F}|_{F_{b}})$. We denote the universal cover of $\widetilde{F}$ by 
\begin{equation}
p^{\univ}_{\widetilde{F}} : \widetilde{F}^{\univ} \longrightarrow \widetilde{F}.
\end{equation}
\noindent We recall the notation
\begin{equation}
q_{\widetilde{F}} = \cod (\widetilde{F},\mathcal{F}|_{\widetilde{F}})
\end{equation}
and let 
\begin{equation}
q_{b,\widetilde{F}}=\cod (\widetilde{F},\mathcal{F}_{b}|_{\widetilde{F}}).
\end{equation}
\noindent Note that $\dim G = q_{\widetilde{F}}$ and the codimension of $H$ in $G$ is equal to $q_{b,\widetilde{F}}$. We regard $(\Lie(G)/\Lie(H))^{*}$ as a subset of $\Lie(G)^{*}$ consisting of the elements whose restriction to $\Lie(H)$ is $0$. Fix a basis $\{\overline{\beta}_{1}$, $\overline{\beta}_{2}$, $\cdots$, $\overline{\beta}_{q_{\widetilde{F}}}\}$ of $\Lie(G)^{*}$ so that $\{\overline{\beta}_{1}$, $\overline{\beta}_{2}$, $\cdots$, $\overline{\beta}_{q_{b,\widetilde{F}}}\}$ is a basis of $(\Lie(G)/\Lie(H))^{*}$. Note that $\dev_{G}^{*}\overline{\beta}_{j}$ is $\pi_{1}\widetilde{F}$-invariant. Let $\beta_{j}$ be the $1$-form on $\widetilde{F}$ induced by the $\pi_{1}\widetilde{F}$-invariant $1$-form $\dev_{G}^{*}\overline{\beta}_{j}$ on $\widetilde{F}^{\univ}$ for $1 \leq j \leq q_{\widetilde{F}}$. Then the restriction of $\beta_{j}$ to each leaf of $\widetilde{\mathcal{F}}$ is zero for $1 \leq j \leq q_{b,\widetilde{F}}$. Note that the Maurer-Cartan form of $(\widetilde{F},\mathcal{F}_{b}|_{\widetilde{F}})$ is given by the equation \eqref{Equation : Omega}. Hence, by (V) of Section~\ref{Section : Lie foliations}, we can write
\begin{equation}
\beta_{j} = \sum_{i=1}^{q_{\widetilde{F}}} b_{j}^{i} \omega_{i}|_{\widetilde{F}}
\end{equation}
for each $j$ for some constants $b_{j}^{i}$, $1 \leq i \leq q_{\widetilde{F}}$.

We define a transverse volume form $\mu_{\widetilde{F}/\mathcal{F}_{b}}$ on $(\widetilde{F},\mathcal{F}_{b}|_{\widetilde{F}})$ by 
\begin{equation}\label{Equation : Lie derivative 1}
\mu_{\widetilde{F}/\mathcal{F}_{b}} = \beta_{1} \wedge \beta_{2} \wedge \cdots \wedge \beta_{q_{b,\widetilde{F}}}.
\end{equation}

Since $\mu_{\widetilde{F}/\mathcal{F}_{b}}$ is closed, we have 
\begin{equation}\label{Equation : Lie derivative 2}
L_{Z_{\nabla}} \mu_{\widetilde{F}/\mathcal{F}_{b}} = d\iota_{Z_{\nabla}} \mu_{\widetilde{F}/\mathcal{F}_{b}} = d\iota_{Z_{\nabla}} (\beta_{1} \wedge \beta_{2} \wedge \cdots \wedge \beta_{q_{b,\widetilde{F}}}).
\end{equation}
Here we recall $q_{b,\widetilde{F}}=\cod (\widetilde{F},\mathcal{F}_{b}|_{\widetilde{F}})$. Note that $\omega_{i}(Z_{\nabla})$ is constant on $\widetilde{F}$ by definition of $Z_{\nabla}$. Then $\beta_{j}(Z_{\nabla})$ is also constant on $\widetilde{F}$. We can write $d\beta_{i}$ as a sum of $\beta_{j} \wedge \beta_{k}$ on $M$ as
\begin{equation}
d\beta_{i} = \sum_{1 \leq j < k \leq q} c_{i}^{jk} \beta_{j} \wedge \beta_{k}.
\end{equation}
Then the restriction of $c_{i}^{jk}$ to $\widetilde{F}$ is constant by the condition (c) of Proposition~\ref{Proposition : Fiberwise average}. 
Then there exists a constant $C_{0}$ such that
\begin{equation}\label{Equation : Lie derivative 3}
\begin{array}{l}
d\iota_{Z_{\nabla}} (\beta_{1} \wedge \beta_{2} \wedge \cdots \wedge \beta_{q_{b,\widetilde{F}}})_{x} 
= C_{0} \big( \beta_{1} \wedge \beta_{2} \wedge \cdots \wedge \beta_{q_{b,\widetilde{F}}} \big)_{x}
\end{array}
\end{equation}
for every point $x$ on $\widetilde{F}$. By \eqref{Equation : Lie derivative 2} and \eqref{Equation : Lie derivative 3}, we have
\begin{equation}
(L_{Z_{\nabla}} \mu_{\widetilde{F}/\mathcal{F}_{b}})_{x} = C_{0} (\mu_{\widetilde{F}/\mathcal{F}_{b}})_{x}
\end{equation}
for every point $x$ on $\widetilde{F}$. In the same way, there exists a constant $C_{0}^{t}$ such that 
\begin{equation}\label{Equation : Lie derivative 4}
(L_{Z_{\nabla}} \mu_{\widetilde{F}/\mathcal{F}_{b}})_{x} = C_{0}^{t} (\mu_{\widetilde{F}/\mathcal{F}_{b}})_{x}
\end{equation}
for every point $x$ on $\pi_{\widetilde{\mathcal{F}}}^{-1}(t)$ for each $t$ in $\pi_{\widetilde{\mathcal{F}}} \circ \gamma (S^1)$. By \eqref{Equation : Lie derivative 4}, $f$ satisfies
\begin{equation}\label{Equation : f and mu}
f^{*}\mu_{\widetilde{F}/\mathcal{F}_{b}} = C_{1} \mu_{\widetilde{F}/\mathcal{F}_{b}}
\end{equation}
for a constant $C_{1}$. Let $\widetilde{F}/\mathcal{F}_{b}$ be the leaf space of $(\widetilde{F},\mathcal{F}_{b}|_{\widetilde{F}})$. $\widetilde{F}/\mathcal{F}_{b}$ is a closed manifold. This $\widetilde{F}/\mathcal{F}_{b}$ is orientable by the transverse orientability of $(\widetilde{F},\mathcal{F}_{b}|_{\widetilde{F}})$. Let $\overline{f}$ be the map induced by $f$ on $\widetilde{F}/\mathcal{F}_{b}$. Since $\mu_{\widetilde{F}/\mathcal{F}_{b}}$ is basic, it is a pull back of a volume form $\overline{\mu}_{\widetilde{F}/\mathcal{F}_{b}}$ on $\widetilde{F}/\mathcal{F}_{b}$. We have 
\begin{equation}
\overline{f}^{*}\overline{\mu}_{\widetilde{F}/\mathcal{F}_{b}} = C_{1} \overline{\mu}_{\widetilde{F}/\mathcal{F}_{b}}
\end{equation}
Hence $C_{1}$ is equal to $1$, because $C_{1}$ is equal to the mapping degree of a diffeomorphism $\overline{f} \colon \widetilde{F}/\mathcal{F}_{b} \longrightarrow \widetilde{F}/\mathcal{F}_{b}$. It follows from \eqref{Equation : f and mu} that $\nabla$ satisfies the condition (B).
\end{proof}

A section of $C^{\infty}(\wedge^{k}T^{*}\mathcal{F})$ is called a leafwise $k$-form on $(M,\mathcal{F})$. If $k=\dim \mathcal{F}$, a leafwise $k$-form is called a leafwise volume form on $(M,\mathcal{F})$. The wedge product induces a natural operation $C^{\infty}(\wedge^{k}T^{*}\mathcal{F}) \otimes C^{\infty}(\wedge^{q}(TM/T\mathcal{F})^{*}) \longrightarrow C^{\infty}(\wedge^{q+k}T^{*}M)$.

\begin{lemma}\label{Lemma : Bundle over S^1}
Let $\gamma \colon S^1 \longrightarrow M$ be a smooth embedding in $M$. Assume that $\gamma$ is transverse to the fibers of $\pi_{\widetilde{\mathcal{F}}}$, and that $\pi_{\widetilde{\mathcal{F}}} \circ \gamma$ is an embedding. Let $M' = \pi_{\widetilde{\mathcal{F}}}^{-1}(\pi_{\widetilde{\mathcal{F}}}(\gamma(S^1)))$. Fix a fiber $\widetilde{F}$ of $\pi_{\widetilde{\mathcal{F}}}|_{M'}$ and a point $x_{0}$ on $\widetilde{F}$. Let $\nabla$ be a flat connection on $\pi_{\widetilde{\mathcal{F}}}|_{M'}$. Let $f \colon \widetilde{F} \longrightarrow \widetilde{F}$ be the holonomy of the flat connection $\nabla$ with respect to the path $\pi_{\widetilde{\mathcal{F}}} \circ \gamma$. For $x$ on $\widetilde{F}$, let $\gamma_{x}^{\nabla}$ be the lift of the path $\pi_{\widetilde{\mathcal{F}}} \circ \gamma$ to $M'$ such that $\gamma_{x}(0)=x$ and $\gamma_{x}^{\nabla}$ is parallel to $\nabla$. Let $\alpha$ be a closed $1$-form on $M$ such that $[\alpha|_{\widetilde{F}}]=0$ in $H^{1}(\widetilde{F};\mathbb{R})$ and $\alpha|_{L_{b}}=0$ on each leaf $L_{b}$ of $(\widetilde{F},\mathcal{F}_{b}|_{\widetilde{F}})$.

We assume that $\nabla$ satisfies the conditions (A), (B) and (C) in the statement of Lemma~\ref{Lemma : Existence of nabla}. Let $\vol_{\widetilde{F}}$ be a volume form on $\widetilde{F}$. Then we have
\begin{equation}\label{Equation : Average}
\int_{\widetilde{F}} \left( \int_{\gamma_{x}^{\nabla}} \alpha \right) \vol_{\widetilde{F}} = \left( \int_{\widetilde{F}} \vol_{\widetilde{F}} \right) \left( \int_{\gamma_{x_{0}}^{\nabla}} \alpha \right).
\end{equation}
\end{lemma}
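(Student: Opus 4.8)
The plan is to reduce the statement to a clean formula for the function $G(x) = \int_{\gamma_{x}^{\nabla}} \alpha$ on $\widetilde{F}$ and then to average this formula over $\widetilde{F}$. First I would exploit the flatness of $\nabla$ through a homotopy of parallel lifts: given any smooth path $c$ in $\widetilde{F}$ from $x_{0}$ to $x$, the map $\Phi(s,t) = \gamma_{c(s)}^{\nabla}(t)$ fills a singular square in $M'$ whose two $t$-edges are $\gamma_{x_{0}}^{\nabla}$ and $\gamma_{x}^{\nabla}$, whose bottom edge is $c$, and whose top edge is $f \circ c$ (since the time-one parallel transport along $\pi_{\widetilde{\mathcal{F}}} \circ \gamma$ is the holonomy $f$). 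Because $\alpha$ is closed, Stokes' theorem gives $G(x) - G(x_{0}) = \int_{f \circ c} \alpha - \int_{c} \alpha$. Using the hypothesis $[\alpha|_{\widetilde{F}}] = 0$, I would fix a primitive $h \in C^{\infty}(\widetilde{F})$ with $dh = \alpha|_{\widetilde{F}}$; then both $c$ and $f \circ c$ lie in $\widetilde{F}$ and the right-hand side becomes $(h(f(x)) - h(x)) - (h(f(x_{0})) - h(x_{0}))$. Hence $x \mapsto G(x) - (h(f(x)) - h(x))$ is constant on $\widetilde{F}$; denote this constant by $c_{0}$, so that $G = c_{0} + f^{*}h - h$.

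Second I would descend everything to the leaf space $\widetilde{F}/\mathcal{F}_{b}$, which is a closed manifold. The hypothesis $\alpha|_{L_{b}} = 0$ gives $dh|_{L_{b}} = 0$ on each connected compact leaf $L_{b}$ of $(\widetilde{F}, \mathcal{F}_{b}|_{\widetilde{F}})$, so $h$ is constant along the leaves and descends to a function $\overline{h}$ on $\widetilde{F}/\mathcal{F}_{b}$. By condition (A) the holonomy $f$ permutes the leaves of $\mathcal{F}_{b}|_{\widetilde{F}}$ and induces a diffeomorphism $\overline{f}$ of $\widetilde{F}/\mathcal{F}_{b}$, so $f^{*}h$ also descends and $G = c_{0} + \overline{f}^{*}\overline{h} - \overline{h}$ is itself constant along the leaves of $\mathcal{F}_{b}|_{\widetilde{F}}$.

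Third comes the averaging, which is the heart of the argument. Condition (B) supplies a holonomy-invariant transverse volume form, $\overline{f}^{*}\overline{\mu}_{\widetilde{F}/\mathcal{F}_{b}} = \overline{\mu}_{\widetilde{F}/\mathcal{F}_{b}}$ (the constant $C_{1}$ of Lemma~\ref{Lemma : Existence of nabla} being the mapping degree $1$). Integrating the descended identity against $\overline{\mu}_{\widetilde{F}/\mathcal{F}_{b}}$, and using that $\overline{f}$ preserves $\overline{\mu}_{\widetilde{F}/\mathcal{F}_{b}}$, so that $\int \overline{f}^{*}\overline{h}\,\overline{\mu}_{\widetilde{F}/\mathcal{F}_{b}} = \int \overline{h}\,\overline{\mu}_{\widetilde{F}/\mathcal{F}_{b}}$, the coboundary term $\overline{f}^{*}\overline{h} - \overline{h}$ integrates to zero, so the $\overline{\mu}_{\widetilde{F}/\mathcal{F}_{b}}$-average of the descended $G$ equals $c_{0}$. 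Writing $\vol_{\widetilde{F}}$ through the leafwise volume form of $\mathcal{F}_{b}|_{\widetilde{F}}$ wedged with $\pi_{b}^{*}\overline{\mu}_{\widetilde{F}/\mathcal{F}_{b}}$ and using that $G$ is constant along leaves, the integral $\int_{\widetilde{F}} G \vol_{\widetilde{F}}$ reduces to $\big(\int_{\widetilde{F}} \vol_{\widetilde{F}}\big)\,c_{0}$. Finally I would identify the constant: condition (C) gives the $\nabla$-parallel section $\gamma_{1}$, which is a closed loop meeting $\widetilde{F}$ in a point fixed by $f$; evaluating $G = c_{0} + f^{*}h - h$ at that point, where $f^{*}h - h$ vanishes and $\gamma_{x}^{\nabla}$ is the loop $\gamma_{1}$, shows $c_{0} = \int_{\gamma_{x_{0}}^{\nabla}} \alpha$, which yields the asserted equality \eqref{Equation : Average}.

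I expect the averaging step to be the main obstacle: the delicate point is that the coboundary $f^{*}h - h$ genuinely survives pointwise, so $G$ need not be constant on $\widetilde{F}$, and it is annihilated only after projecting to $\widetilde{F}/\mathcal{F}_{b}$ and integrating against the \emph{holonomy-invariant} transverse volume furnished by (B). Matching this invariant transverse measure to the given volume form $\vol_{\widetilde{F}}$, through the adapted product structure and the constancy of the leafwise volumes, and pinning down the residual constant through the parallel section of (C), are exactly the places where conditions (A), (B) and (C), together with $[\alpha|_{\widetilde{F}}] = 0$ and $\alpha|_{L_{b}} = 0$, are consumed.
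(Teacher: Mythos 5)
Your first two steps and your identification of the constant are sound and in fact coincide with the paper's own computations: the square $\Phi(s,t)=\gamma_{c(s)}^{\nabla}(t)$ plus Stokes is the paper's identity \eqref{Equation : Stokes 3}, the descent of $h$ and of $f^{*}h-h$ to the leaf space uses condition (A) and $\alpha|_{L_{b}}=0$ exactly as the paper does, and reading $x_{0}$ as the point where the parallel section of (C) meets $\widetilde{F}$ (so that $f(x_{0})=x_{0}$) is also what the paper does implicitly. The gap is in the averaging step. Write $G(x)=\int_{\gamma_{x}^{\nabla}}\alpha$, let $\pi\colon\widetilde{F}\rightarrow\widetilde{F}/\mathcal{F}_{b}$ be the projection, let $\overline{h}$, $\overline{f}$, $\overline{\mu}$, $\overline{G}$ be the descended objects, and let $\eta_{\mathcal{F}_{b}}$ be the leafwise volume form with $\eta_{\mathcal{F}_{b}}\wedge\pi^{*}\overline{\mu}=\vol_{\widetilde{F}}$. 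Integration along the fibers of $\pi$ does \emph{not} give the unweighted average you compute; it gives
\begin{equation*}
\int_{\widetilde{F}}G\,\vol_{\widetilde{F}}=\int_{\widetilde{F}/\mathcal{F}_{b}}\overline{G}\;v\,\overline{\mu},\qquad v(y)=\int_{L_{y}}\eta_{\mathcal{F}_{b}},
\end{equation*}
where $v$ is the leafwise volume function. So what must vanish is $\int(\overline{f}^{*}\overline{h}-\overline{h})\,v\,\overline{\mu}$, not $\int(\overline{f}^{*}\overline{h}-\overline{h})\,\overline{\mu}$. Changing variables with $\overline{f}^{*}\overline{\mu}=\overline{\mu}$ turns the former into $\int\overline{h}\,\bigl(v\circ\overline{f}^{-1}-v\bigr)\,\overline{\mu}$, which vanishes only if $v$ is $\overline{f}$-invariant. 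Nothing in your argument yields this: $\vol_{\widetilde{F}}$ is an arbitrary volume form, condition (B) controls only the transverse volume, and the ``constancy of the leafwise volumes'' you appeal to at the end is not available --- even in the paper's application, Lemma~\ref{Lemma : Constant volume} gives constancy of the volumes of the fibers of $\pi_{\widetilde{\mathcal{F}}}$ (whose leaves are minimal by condition (b)), not of the $\mathcal{F}_{b}$-leaves inside a fiber. So the unweighted average over $(\widetilde{F}/\mathcal{F}_{b},\overline{\mu})$ computes the wrong mean, and your proof does not close.

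This obstruction is precisely what the paper's proof is built around, and the device you are missing is its leafwise Moser isotopy. The paper takes a leafwise primitive $\sigma$ of $f^{*}\eta_{\mathcal{F}_{b}}-\eta_{\mathcal{F}_{b}}$ (this leafwise exactness, which the paper deduces from the compactness and orientedness of the leaves, is equivalent to the $\overline{f}$-invariance of $v$ discussed above) and runs the Moser flow $\{\phi_{s}\}$ along the leaves, fixing $x_{0}$, so that the corrected holonomy $f\circ\phi_{1}$ preserves the \emph{full} volume form $\vol_{\widetilde{F}}$ and not merely the transverse one. Because $h$ is constant on the leaves and $\phi_{1}$ maps each leaf to itself, replacing $\nabla$ by the deformed flat connection $\nabla_{1}$ with holonomy $f\circ\phi_{1}$ changes none of the integrals $\int_{\gamma_{x}^{\nabla}}\alpha$; and for the corrected map the change of variables $\int_{\widetilde{F}}(h\circ f\circ\phi_{1})\,\vol_{\widetilde{F}}=\int_{\widetilde{F}}h\,\vol_{\widetilde{F}}$ annihilates the coboundary term directly, with no leaf-volume weight ever appearing. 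To repair your argument you must either prove $v\circ\overline{f}=v$ (the content of the paper's leafwise exactness step) or import this Moser correction; without one of these, the passage from the $\overline{\mu}$-average to the $\vol_{\widetilde{F}}$-average is a genuine gap, not a routine ``matching'' of measures.
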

\begin{proof}
First, we shall show that there exists an isotopy $\{\phi_{s}\}_{s \in [0,1]}$ on $\widetilde{F}$ such that 
\begin{enumerate}
\item $\phi_{0}=\id_{\widetilde{F}}$, 
\item $f \circ \phi_{1}$ preserves $\vol_{\widetilde{F}}$,
\item $\phi_{s}$ maps each leaf of $\widetilde{\mathcal{F}}$ to itself and 
\item $\phi_{s}$ fixes $x_{0}$
\end{enumerate}
by the assumption and a leafwise version of Moser's argument in below. The leafwise version of Moser's argument was used by Ghys in \cite{Ghys} and by Hector, Macias and Saralegui in \cite{Hector Macias Saralegui}. Let $\eta_{\mathcal{F}_{b}}$ be the leafwise volume form on $(\widetilde{F},\mathcal{F}_{b}|_{\widetilde{F}})$ such that
\begin{equation}\label{Equation : Volume 1}
\eta_{\mathcal{F}_{b}} \wedge \mu_{\widetilde{F}/\mathcal{F}_{b}} = \vol_{\widetilde{F}}.
\end{equation}
Since each leaf $L_{b}$ of $\mathcal{F}_{b}|_{\widetilde{F}}$ is compact and oriented by the assumption, $f$ maps the fundamental class of $L_{b}$ to the fundamental class of $f(L_{b})$. Then we have a leafwise $(\dim \mathcal{F}_{b} -1)$-form $\sigma$ on $(\widetilde{F},\mathcal{F}_{b}|_{\widetilde{F}})$ such that $d\sigma = f^{*}\eta_{\mathcal{F}_{b}} - \eta_{\mathcal{F}_{b}}$. By adding a closed $(\dim \mathcal{F}_{b} -1)$-form supported on an open neighborhood of $x_{0}$ to $\sigma$, we can modify $\sigma$ so that $\sigma_{x_{0}}=0$ and $d\sigma = f^{*}\eta_{\mathcal{F}_{b}} - \eta_{\mathcal{F}_{b}}$ are satisfied. Since $\eta_{\mathcal{F}_{b}}$ is a leafwise volume form on $(\widetilde{F},\mathcal{F}_{b}|_{\widetilde{F}})$, there exists a vector field $Y$ on $\widetilde{F}$ tangent to leaves of $\widetilde{\mathcal{F}}$ such that $-\iota_{Y}\eta_{\mathcal{F}_{b}} = \sigma$. Let $\{\phi_{s}\}_{s \in \mathbb{R}}$ be the flow generated by $Y$. Let $\eta_{s} = \eta_{\mathcal{F}_{b}} + sd\sigma$. We have
\begin{equation}
\frac{d(\phi_{s}^{*}\eta_{s})}{ds} = \phi_{s}^{*}\Big(L_{Y}\eta_{s} + \frac{d\eta_{s}}{ds}\Big) = \phi_{s}^{*}d(\iota_{Y}\eta_{\mathcal{F}_{b}} + \sigma) = 0.
\end{equation}
Hence we have 
\begin{equation}\label{Equation : Volume 2}
\phi_{1}^{*} f^{*} \eta_{\mathcal{F}_{b}} = \phi_{1}^{*} \eta_{1}= \eta_{0} = \eta_{\mathcal{F}_{b}}.
\end{equation}
Thus $f \circ \phi_{1}$ preserves $\eta_{\mathcal{F}_{b}}$. Here, $\phi_{s}$ maps each leaf of $\widetilde{\mathcal{F}}$ to itself, because $Y$ is tangent to leaves of $\widetilde{\mathcal{F}}$. Then clearly $\phi_{1}$ preserves the transverse volume form $\mu_{\widetilde{F}/\mathcal{F}_{b}}$. Since $f$ preserves $\mu_{\widetilde{F}/\mathcal{F}_{b}}$ by the assumption, by \eqref{Equation : Volume 1} and \eqref{Equation : Volume 2}, we have
\begin{equation}
(f \circ \phi_{1})^{*} \vol_{\widetilde{F}} = (f \circ \phi_{1})^{*} \eta_{\mathcal{F}_{b}} \wedge (f \circ \phi_{1})^{*}\mu_{\widetilde{F}/\mathcal{F}_{b}} = \eta_{\mathcal{F}_{b}} \wedge \mu_{\widetilde{F}/\mathcal{F}_{b}} = \vol_{\widetilde{F}}.
\end{equation}
Hence $f \circ \phi_{1}$ preserves $\vol_{\widetilde{F}}$. Since $\sigma_{x_{0}}=0$, we have $Y_{x_{0}}=0$. This implies that $\phi_{s}$ fixes $x_{0}$.

Using $\{\phi_{s}\}_{s \in [0,1]}$, we can construct a smooth family $\{\nabla_{s}\}_{s \in [0,1]}$ of flat connections on $\pi_{\widetilde{\mathcal{F}}}|_{M'}$ such that $\nabla_{0}=\nabla$ and the holonomy of $\nabla_{s}$ with respect to $\pi_{\widetilde{\mathcal{F}}} \circ \gamma$ is $f \circ \phi_{s}$. Since each $\phi_{s}$ fixes $x_{0}$, we can take $\{\nabla_{s}\}_{s \in [0,1]}$ so that
\begin{equation}\label{Equation : gamma x 0}
\gamma_{x_{0}}^{\nabla} = \gamma_{x_{0}}^{\nabla_{s}}
\end{equation}
for $0 \leq s \leq 1$. For $x$ in $\widetilde{F}$, let $\gamma_{x}^{\nabla_{1}}$ be the lift of the path $\pi_{\widetilde{\mathcal{F}}} \circ \gamma$ to $M$ such that $\gamma_{x}(0)=x$ and $\gamma_{x}$ is parallel to $\nabla_{1}$. We take a function $h$ on $\widetilde{F}$ so that $dh = \alpha|_{\widetilde{F}}$. For each point $x$ in $\widetilde{F}$, we have
\begin{equation}\label{Equation : Stokes}
\begin{array}{c}
\int_{\gamma_{x}^{\nabla_{0}}} \alpha + \big( h(\phi_{1} \circ f(x)) - h(f(x)) \big) - \int_{\gamma_{x}^{\nabla_{1}}} \alpha = 0
\end{array}
\end{equation}
by the Stokes' theorem. By the assumption on $\alpha$, the restriction of $h$ to each leaf of $\mathcal{F}_{b}|_{\widetilde{F}}$ is constant. Since $\phi_{1} \circ f$ maps each leaf of $\widetilde{\mathcal{F}}$ to itself, we have $h(\phi_{1} \circ f(x)) - h(f(x))=0$. Hence, by \eqref{Equation : Stokes}, we have
\begin{equation}
\int_{\gamma_{x}^{\nabla_{0}}} \alpha - \int_{\gamma_{x}^{\nabla_{1}}} \alpha = 0
\end{equation}
for each point $x$ on $\widetilde{F}$. Then we have
\begin{equation}\label{Equation : Stokes 1}
\int_{\widetilde{F}} \left( \int_{\gamma_{x}^{\nabla_{0}}} \alpha \right) \vol_{\widetilde{F}} - \int_{\widetilde{F}} \left( \int_{\gamma_{x}^{\nabla_{1}}} \alpha \right) \vol_{\widetilde{F}} = \int_{\widetilde{F}} \left( \int_{\gamma_{x}^{\nabla_{0}}} \alpha - \int_{\gamma_{x}^{\nabla_{1}}} \alpha \right) \vol_{\widetilde{F}} \\
 = 0.
\end{equation}
For each point $x$ on $\widetilde{F}$, we have
\begin{equation}\label{Equation : Stokes 3}
\begin{array}{c}
\big( h(x) - h(x_{0}) \big) + \int_{\gamma_{x}^{\nabla_{1}}} \alpha + \big( h(f \circ \phi_{1}(x_{0})) - h(f(x)) \big) - \int_{\gamma_{x_{0}}^{\nabla_{1}}} \alpha = 0
\end{array}
\end{equation}
by the Stokes' theorem. We get
\begin{equation}\label{Equation : Stokes 2}
\int_{\widetilde{F}} \left( \int_{\gamma_{x}^{\nabla_{1}}} \alpha \right) \vol_{\widetilde{F}} - \left( \int_{\widetilde{F}} \vol_{\widetilde{F}} \right) \left( \int_{\gamma_{x_{0}}^{\nabla}} \alpha \right) = 0
\end{equation}
as follows:
\begin{align*}
& \textstyle \int_{\widetilde{F}} \left( \int_{\gamma_{x}^{\nabla_{1}}} \alpha \right) \vol_{\widetilde{F}} - \left( \int_{\widetilde{F}} \vol_{\widetilde{F}} \right) \left( \int_{\gamma_{x_{0}}^{\nabla}} \alpha \right) \\
\textstyle = & \textstyle \int_{\widetilde{F}} \left( \int_{\gamma_{x}^{\nabla_{1}}} \alpha \right) \vol_{\widetilde{F}} - \left( \int_{\widetilde{F}} \vol_{\widetilde{F}} \right) \left( \int_{\gamma_{x_{0}}^{\nabla_{1}}} \alpha \right) \\
\textstyle = & \textstyle \int_{\widetilde{F}} \left( \int_{\gamma_{x}^{\nabla_{1}}} \alpha \right) \vol_{\widetilde{F}} - \int_{\widetilde{F}} \left( \int_{\gamma_{x_{0}}^{\nabla_{1}}} \alpha \right) \vol_{\widetilde{F}} \\
\textstyle = & \textstyle \int_{\widetilde{F}} \left( \int_{\gamma_{x}^{\nabla_{1}}} \alpha - \int_{\gamma_{x_{0}}^{\nabla_{1}}} \alpha \right) \vol_{\widetilde{F}} \label{Equation : Stokes 2} \\
\textstyle = & \textstyle \int_{\widetilde{F}} \Big( -  \big( h(x) - h(x_{0}) \big) - \big( h(f \circ \phi_{1}(x_{0})) - h(f \circ \phi_{1}(x)) \big) \Big) \vol_{\widetilde{F}} \\
\textstyle = & \textstyle - \int_{\widetilde{F}} (h(x) - h(x_{0})) \vol_{\widetilde{F}} + \int_{\widetilde{F}} ( h(x) - h(x_{0})) \phi_{1}^{*}f^{*}\vol_{\widetilde{F}} \\
\textstyle = & \textstyle \, 0,
\end{align*}
where we used \eqref{Equation : gamma x 0} in the first equality, \eqref{Equation : Stokes 3} in the fourth equality and $\phi_{1}^{*}f^{*}\vol_{\widetilde{F}} =\vol_{\widetilde{F}}$ in the last equality. The equation \eqref{Equation : Average} follows from \eqref{Equation : Stokes 1} and \eqref{Equation : Stokes 2}.
\end{proof}

\subsection{$\xi(\mathcal{F})=[\widetilde{\kappa}_{b}]$ on $M$}

We will show a lemma which will be used to complete the proof of Proposition~\ref{Proposition : Fiberwise average} (ii). Note that $\alpha$ will be considered to be $\xi(\mathcal{F})-[\widetilde{\kappa}_{b}]$ in the application in Section 5.6.

\begin{lemma}\label{Lemma : Equality on M'}
Assume that the conditions (a), (b), (c) and (d) of Proposition~\ref{Proposition : Fiberwise average} are satisfied. Assume that $\gamma_{0}$ in $\pi_1(M,x_{0})$ satisfies the following conditions: $\gamma_{0}$ is represented by a smooth path $l_{0}' \colon [0,1] \longrightarrow M$ which factors a smooth embedding $l_{0} \colon S^1 \longrightarrow M$ and is transverse to the fibers of $\pi_{\widetilde{\mathcal{F}}}$, and $\pi_{\widetilde{\mathcal{F}}} \circ l_{0}$ is a smooth embedding. Let $\alpha$ be a closed basic $1$-form on $(M,\mathcal{F})$ which satisfies the following conditions:
\begin{enumerate}
\item $\alpha|_{L_{b}} = 0$ for each leaf $L_{b}$ of $(\widetilde{F},\mathcal{F}_{b}|_{\widetilde{F}})$.
\item $\alpha|_{\widetilde{F}}$ is exact.
\item For any submanifold $M'$ of $M$ which is a union of fibers of $\pi_{\widetilde{\mathcal{F}}}$, 
\begin{equation}
\int_{M'} \Big( \int_{\gamma_{x}} \alpha \Big) \vol_{M'}(x)= 0
\end{equation}
is satisfied for the flow $\{\phi_{t}\}_{t \in [0,1]}$ generated by every vector field $X$ such that $\omega_{i}(X)$ is constant on each fiber of $\pi_{\widetilde{\mathcal{F}}}$ for each $i$, where $\gamma_{x}$ is the orbit of $x$ of the flow $\{\phi_{t}\}_{t \in [0,1]}$.
\end{enumerate}
Then we have
\begin{equation}
\int_{\gamma_{0}} \alpha = 0.
\end{equation}
\end{lemma}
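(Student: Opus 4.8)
The plan is to reduce the statement to a single fiberwise averaging identity on the saturated submanifold cut out by the loop, and then to use hypothesis (iii) to annihilate the average. First I would set $M' = \pi_{\widetilde{\mathcal{F}}}^{-1}(\pi_{\widetilde{\mathcal{F}}}(l_{0}(S^1)))$, which is a union of fibers of $\pi_{\widetilde{\mathcal{F}}}$ and, via $\pi_{\widetilde{\mathcal{F}}}$, a fiber bundle over the embedded circle $\pi_{\widetilde{\mathcal{F}}}(l_{0}(S^1))$ with fiber $\widetilde{F}$. Since $l_{0}$ satisfies the hypotheses of Lemma~\ref{Lemma : Existence of nabla}, I would apply that lemma with $\gamma = l_{0}$ to produce a flat connection $\nabla$ on $\pi_{\widetilde{\mathcal{F}}}|_{M'}$ enjoying the properties (A)--(D), together with the vector field $Z_{\nabla}$ of (D), whose orbits are $\nabla$-parallel and for which $\omega_{i}(Z_{\nabla})$ is constant on each fiber. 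By property (C) the section $\gamma_{1} = l_{0} \circ \varphi$ is $\nabla$-parallel, so the $\nabla$-parallel lift of $\pi_{\widetilde{\mathcal{F}}} \circ l_{0}$ starting at any point $x_{0,t} := \gamma_{1}(t)$ of this closed section merely retraces $l_{0}$; hence $\int_{\gamma_{x_{0,t}}^{\nabla}} \alpha = \int_{l_{0}} \alpha = \int_{\gamma_{0}} \alpha$ for every $t$.

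Next I would feed $X = Z_{\nabla}$ into hypothesis (iii). Because the orbits of $Z_{\nabla}$ are $\nabla$-parallel and its time-one map is the holonomy of $\nabla$, the orbit $\gamma_{x}$ from time $0$ to time $1$ coincides with the parallel lift $\gamma_{x}^{\nabla}$; and since $\omega_{i}(Z_{\nabla})$ is fiberwise constant, $Z_{\nabla}$ is an admissible choice in (iii). Thus (iii) gives $\int_{M'} \big( \int_{\gamma_{x}^{\nabla}} \alpha \big) \vol_{M'}(x) = 0$. On the other hand, $\alpha$ is closed and satisfies $[\alpha|_{\widetilde{F}}] = 0$ by (ii) and $\alpha|_{L_{b}} = 0$ by (i), so for each fiber $\widetilde{F}_{t} = \pi_{\widetilde{\mathcal{F}}}^{-1}(t)$, taking the base point on $\gamma_{1}$, Lemma~\ref{Lemma : Bundle over S^1} applies and yields $\int_{\widetilde{F}_{t}} \big( \int_{\gamma_{x}^{\nabla}} \alpha \big) \vol_{\widetilde{F}_{t}} = \big( \int_{\widetilde{F}_{t}} \vol_{\widetilde{F}_{t}} \big) \big( \int_{\gamma_{x_{0,t}}^{\nabla}} \alpha \big) = \big( \int_{\widetilde{F}_{t}} \vol_{\widetilde{F}_{t}} \big) \big( \int_{\gamma_{0}} \alpha \big)$, where the last equality is the identification from the first paragraph.

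Finally I would integrate the fiberwise identity over the base. By hypothesis (a) the metric $g$ is bundle-like for $\widetilde{\mathcal{F}}$, so Fubini along the fibers (exactly as in the proof of Lemma~\ref{Lemma : Integral along flows}) converts the vanishing $M'$-integral into $0 = \big( \int_{\gamma_{0}} \alpha \big) \int_{M'} \vol_{M'}$; since $\int_{M'} \vol_{M'} > 0$, this forces $\int_{\gamma_{0}} \alpha = 0$. The factor $\int_{\widetilde{F}_{t}} \vol_{\widetilde{F}_{t}}$ is in fact constant in $t$ by hypothesis (b) and Lemma~\ref{Lemma : Constant volume}, though only its positivity is needed here.

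The step I expect to be the main obstacle is the uniform fiberwise application of Lemma~\ref{Lemma : Bundle over S^1}: one must verify that the connection properties (A)--(C) furnished by Lemma~\ref{Lemma : Existence of nabla} persist with the holonomy based at \emph{every} fiber $\widetilde{F}_{t}$, not merely the one through $x_{0}$, and with base point on the parallel section $\gamma_{1}$, so that the right-hand side of the fiberwise identity is genuinely the single constant $\int_{\gamma_{0}} \alpha$ across all fibers. This is precisely where the globality of $Z_{\nabla}$ on $M'$ and of the holonomy-invariant transverse volume form $\mu_{\widetilde{F}/\mathcal{F}_{b}}$ produced in Lemma~\ref{Lemma : Existence of nabla} is essential.
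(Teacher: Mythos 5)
Your proposal is correct and follows essentially the same route as the paper's proof: the same saturated submanifold $M' = \pi_{\widetilde{\mathcal{F}}}^{-1}(\pi_{\widetilde{\mathcal{F}}}(l_{0}(S^1)))$, the flat connection from Lemma~\ref{Lemma : Existence of nabla}, condition (D) feeding $Z_{\nabla}$ into hypothesis (iii), the fiberwise averaging identity of Lemma~\ref{Lemma : Bundle over S^1}, and Fubini via condition (a) together with the constant fiber volume from (b) and Lemma~\ref{Lemma : Constant volume}. If anything, you are more explicit than the paper about choosing the base points on the $\nabla$-parallel section $\gamma_{1}$ so that the right-hand side of the fiberwise identity is the single constant $\int_{\gamma_{0}}\alpha$ across all fibers, a point the paper leaves implicit.
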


\begin{proof}
Let
\begin{equation}
K=\pi_{\widetilde{\mathcal{F}}} \circ l_{0} (S^1), \quad M'=\pi_{\widetilde{\mathcal{F}}}^{-1}(K).
\end{equation} 
$M'$ is a submanifold of $M$ which is a union of fibers of $\pi_{\widetilde{\mathcal{F}}}$ by the assumption on $\gamma_{0}$.

By the condition (a), we have
\begin{equation}\label{Equation : M'1}
\int_{M'} \Big( \int_{\gamma_{x}} \alpha \Big) \vol_{M'}(x) = \int_{K} \left( \int_{\pi_{\widetilde{\mathcal{F}}} } \Big( \int_{\gamma_{x}} \alpha \Big) \vol_{\widetilde{\mathcal{F}}} \right) \vol_{K}.
\end{equation}

By the conditions (c), (d) and Lemma~\ref{Lemma : Existence of nabla}, there exists a flat connection $\nabla$ on the fiber bundle $M' \longrightarrow K$ which satisfies the conditions (A), (B), (C) and (D) in the statement of Lemma~\ref{Lemma : Existence of nabla}. By condition (D) and the assumption (iii), we have
\begin{equation}\label{Equation : M'0}
\int_{M'} \Big( \int_{\gamma_{x}} \alpha \Big) \vol_{M'}(x) = 0.
\end{equation}
Since the assumptions of Lemma~\ref{Lemma : Bundle over S^1} are satisfied by the conditions (A), (B) and (C), we have
\begin{equation}\label{Equation : M'2}
\int_{\pi_{\widetilde{\mathcal{F}}}^{-1}(t)} \Big( \int_{\gamma_{x}} \alpha \Big) \vol_{\pi_{\widetilde{\mathcal{F}}}^{-1}(t)} = \left( \int_{\pi_{\widetilde{\mathcal{F}}}^{-1}(t)} \vol_{\pi_{\widetilde{\mathcal{F}}}^{-1}(t)} \right)  \left( \int_{\gamma_{x_{0}}} \alpha \right)
\end{equation}
for each $t$ in $K$ by Lemma~\ref{Lemma : Bundle over S^1}. By condition (b) and Lemma~\ref{Lemma : Constant volume}, the volume of fibers of $\pi_{\widetilde{\mathcal{F}}}$ is constant. By \eqref{Equation : M'1}, \eqref{Equation : M'0} and \eqref{Equation : M'2}, we have
\begin{equation}
\int_{\gamma_{x_{0}}} \alpha = 0.
\end{equation}
Hence Lemma~\ref{Lemma : Equality on M'} is proved.
\end{proof}

\subsection{Proof of Proposition~\ref{Proposition : Fiberwise average} (ii)}

By the homotopy exact sequence of the fiber bundle $\pi_{\widetilde{\mathcal{F}}}$, we have an exact sequence 
\begin{equation}
\xymatrix{\pi_2(M/\widetilde{\mathcal{F}},\pi_{\widetilde{\mathcal{F}}}(x_{0})) \ar[r] & \pi_1(\widetilde{F},x_{0}) \ar[r]^{\iota} & \pi_1(M,x_{0}) \ar[r]^>>>>>{(\pi_{\widetilde{\mathcal{F}}})_{*}} & \pi_1(M/\widetilde{\mathcal{F}},\pi_{\widetilde{\mathcal{F}}}(x_{0})) \ar[r] & 0.}
\end{equation}
By Lemmas~\ref{Lemma : Integral along flows} and \ref{Lemma : Fiber of pi tilde}, we have $\kappa_{b}|_{L_{b}} - \widetilde{\kappa}_{b}|_{L_{b}}$ for every leaf $L_{b}$ of $(\widetilde{F},\mathcal{F}_{b}|_{\widetilde{F}})$ and $[\kappa_{b}|_{\widetilde{F}}] - [\widetilde{\kappa}_{b}|_{\widetilde{F}}]=0$. This implies that $[\kappa_{b}] - [\widetilde{\kappa}_{b}]$ vanishes on the image of $\iota$. Then Lemma~\ref{Lemma : Equality on M'} implies $\int_{\gamma} (\kappa_{b} - \widetilde{\kappa}_{b}) = 0$ for $\gamma$ in $\pi_1(M,x_{0})$ such that 
\begin{itemize}
\item $\gamma$ is transverse to the fibers of $\pi_{\widetilde{\mathcal{F}}}$ and
\item $\pi_{\widetilde{\mathcal{F}}} \circ \gamma$ is an embedding.
\end{itemize}
Note that $\pi_1(M/\widetilde{\mathcal{F}},\pi_{\widetilde{\mathcal{F}}}(x_{0}))$ is generated by the loops of the form $(\pi_{\widetilde{\mathcal{F}}})_{*} \gamma$ where $\gamma$ runs on all of the closed paths satisfying these two conditions. Hence we have $[\kappa_{b}] = [\widetilde{\kappa}_{b}]$ in $H^1(M;\mathbb{R})$.

\section{Continuity of the \'{A}lvarez classes}\label{Section : Continuity}

We will show Theorem~\ref{Theorem : Continuity} for smooth families of orientable transversely parallelizable foliations using Proposition~\ref{Proposition : Fiberwise average}. We will use the notation of Definition~\ref{definition: families of foliations} (ii).

\subsection{A family version of the Molino theory}

Let $U$ be a connected open set in $\mathbb{R}^{\ell}$ which contains $0$. Let $M$ be a closed manifold, and $\{\mathcal{F}^{t}\}_{t \in U}$ be a smooth family of orientable transversely parallelizable foliations of $M$ over $U$ given by a smooth foliation $\mathcal{F}^{\amb}$ of $M \times U$. We define a distribution $D$ on $M \times U$ by
\begin{equation}\label{Equation : D}
D_{(x,t)}=\{v \in T_{(x,t)}(M \times U) \, | \, vf(x,t)=0, \forall f \in C^{\infty}_{b}(M \times U,\mathcal{F}^{\amb})\},
\end{equation}
where $C^{\infty}_{b}(M \times U,\mathcal{F}^{\amb})$ is the space of basic functions on $(M \times U, \mathcal{F}^{\amb})$. By the standard argument of the Molino theory on $D$, we shall obtain the following properties of $D$ similar to the properties of the basic foliation of transversely parallelizable foliations:
\begin{lemma}\label{Lemma : Molino}
\begin{enumerate}
\item $(M \times U,\mathcal{F}^{\amb})$ is fiberwise transitive, that is, for each two points $(x,t)$ and $(y,t)$ in $M \times U$ with the same second coordinates, there exists a diffeomorphism $f$ of $M \times U$ which preserves $\mathcal{F}^{\amb}$ and satisfies $f(x)=y$.
\item The dimension of $D_{(x,t)}$ is independent of $x$.
\item $D|_{M \times \{t\}}$ is integrable, and we have a foliation $\mathcal{D}^{t}$ of $M \times \{t\}$ defined by $D|_{M \times \{t\}}$.
\item The leaf space $(M \times \{t\})/\mathcal{D}^{t}$ is a closed manifold and the canonical projection $M \times \{t\} \longrightarrow (M \times \{t\})/\mathcal{D}^{t}$ is a smooth fiber bundle with compact fibers for each $t$.
\end{enumerate}
\end{lemma}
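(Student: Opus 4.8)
The plan is to run Molino's structure theory for transversely parallelizable foliations slice by slice, carrying the parameter $t$ along and checking at each stage that the constructions depend smoothly on $t$. The initial observation is that each coordinate function pulled back from $U$ is basic for $\mathcal{F}^{\amb}$, since every leaf of $\mathcal{F}^{\amb}$ lies in a single slice $M \times \{t\}$; hence every $v$ in $D$ annihilates the differentials of these coordinates, so $D$ is tangent to the slices and $D|_{M \times \{t\}} \subset T(M \times \{t\})$ is well defined from the outset. I would establish (i) first, deduce (ii) and (iii) formally from it, and then obtain (iv) by relating $\mathcal{D}^{t}$ to the basic (leaf-closure) foliation of the transversely parallelizable foliation $\mathcal{F}^{t}$ and invoking Molino's basic fibration theorem.

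For (i), I would use the global sections $X^{1}_{\amb}, \dots, X^{q}_{\amb}$ of $(\nu\mathcal{F})^{\amb}$ forming the transverse parallelism. Since each $X^{j}_{\amb}$ lies in the kernel of $T(M \times U)/T\mathcal{F}^{\amb} \to TU$, using $g^{\amb}$ one can lift it to a slice-tangent vector field $\widehat{X}^{j}$ on $M \times U$. Because $X^{j}_{\amb}$ is a transverse field, $L_{Y}X^{j}_{\amb}=0$ for every $Y$ tangent to $\mathcal{F}^{\amb}$, and this is precisely the statement that $[\widehat{X}^{j}, Y]$ is tangent to $\mathcal{F}^{\amb}$; thus $\widehat{X}^{j}$ is a foliate vector field and its flow preserves $\mathcal{F}^{\amb}$. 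As $M$ is compact and $\widehat{X}^{j}$ is tangent to the slices, these flows are complete and fix $t$. Adjoining all slice-tangent vector fields tangent to $\mathcal{F}^{\amb}$, whose flows map each leaf to itself and hence preserve $\mathcal{F}^{\amb}$, one obtains a family of complete, $\mathcal{F}^{\amb}$-preserving, slice-tangent flows whose generators span $T_{(x,t)}(M \times \{t\})$ at every point: the $\widehat{X}^{j}$ span the normal directions and the leafwise fields span $T\mathcal{F}^{\amb}$. A standard orbit argument then shows that the group generated by these flows has open, hence full, orbits on each connected slice, which is exactly (i).

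Statements (ii) and (iii) follow formally. Any diffeomorphism $f$ preserving $\mathcal{F}^{\amb}$ permutes basic functions, so $df$ carries $D_{(x,t)}$ isomorphically onto $D_{f(x,t)}$; combined with the transitivity in (i) on each slice, this forces $\dim D_{(x,t)}$ to be independent of $x$, giving (ii). For (iii), if $v$ and $w$ are sections of $D$ and $f$ is a basic function, then $wf \equiv 0$, so $[v,w]f = v(wf) - w(vf) = 0$; hence $D$ is involutive. Since $v$ and $w$ are tangent to the slices, this involutivity holds for $D|_{M \times \{t\}}$, which has constant rank by (ii) and is the kernel of a smooth bundle map, hence a smooth subbundle; Frobenius' theorem then produces the foliation $\mathcal{D}^{t}$.

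Finally, for (iv) I would identify the leaves of $\mathcal{D}^{t}$ with the closures of the leaves of $\mathcal{F}^{t}$. Because the $\mathcal{F}^{\amb}$-basic functions restrict to $\mathcal{F}^{t}$-basic functions, the leaf-closure distribution is contained in $D|_{M \times \{t\}}$; the reverse inclusion is where the transverse parallelism enters, via the standard Molino argument that on a transitive transversely parallelizable foliation the basic functions cut out precisely the leaf closures. Since $M$ is compact these closures are compact, and Molino's basic fibration theorem shows that $(M \times \{t\})/\mathcal{D}^{t}$ is a closed manifold over which $M \times \{t\}$ is a smooth fiber bundle with compact fibers, proving (iv). The main obstacle I anticipate is not any single step in isolation but the uniformity in $t$: one must arrange the foliate lifts $\widehat{X}^{j}$, the resulting fibrations, and the identification of $\mathcal{D}^{t}$ with the leaf-closure foliation to depend smoothly on the parameter, which is exactly what upgrades a slicewise restatement of Molino's theorem into a genuine family statement.
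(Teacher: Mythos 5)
Your handling of (i)--(iii) is essentially the paper's own argument: lift the transverse parallelism to slice-tangent foliate vector fields (note that in the transversely parallelizable setting there is no metric $g^{\amb}$, but none is needed, since any lift of $X^{j}_{\amb}$ is automatically slice-tangent because $X^{j}_{\amb}$ lies in the kernel of the map to $TU$), use their complete flows together with leafwise diffeomorphisms to get transitivity on each slice, deduce (ii) from the fact that $\mathcal{F}^{\amb}$-preserving diffeomorphisms carry $D$ to $D$, and get (iii) from $[Z_{1},Z_{2}]f=Z_{1}Z_{2}f-Z_{2}Z_{1}f=0$ for basic $f$ plus constant rank. Up to that point your proposal and the paper agree.

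Your proof of (iv), however, rests on a false identification. You claim that the leaves of $\mathcal{D}^{t}$ are the closures of the leaves of $\mathcal{F}^{t}$, obtaining the reverse inclusion from ``the standard Molino argument that basic functions cut out precisely the leaf closures.'' Molino's argument applies to the full algebra $C^{\infty}_{b}(M\times\{t\},\mathcal{F}^{t})$ of slice-basic functions, whereas $D$ is defined by the restrictions of \emph{ambient} basic functions in $C^{\infty}_{b}(M\times U,\mathcal{F}^{\amb})$, and an $\mathcal{F}^{t}$-basic function need not extend to an $\mathcal{F}^{\amb}$-basic one. Concretely, let $M=T^{2}$ and take a family of linear flows whose slope is rational at $t=0$ and irrational for $t\neq 0$: every ambient basic function is constant on $M\times\{t\}$ for each $t\neq 0$, hence by continuity constant on $M\times\{0\}$ as well, so $\mathcal{D}^{0}$ has the single leaf $T^{2}$, while the leaf closures of $\mathcal{F}^{0}$ are circles. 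This jumping is precisely the phenomenon the paper is designed to handle; if your identification were correct, the remark following the lemma and the whole need for $\widetilde{\mathcal{F}}^{0}$ to differ from the basic foliation $\mathcal{F}_{b}^{0}$ in Section 5 would be vacuous. The statement (iv) is nevertheless true, and the paper proves it directly from $D$: near a leaf $L$ of $\mathcal{D}^{t_{0}}$ choose $k=\dim M-\dim D^{t_{0}}$ ambient basic functions $f_{1},\dots,f_{k}$ whose differentials are independent at a point of $L$; since they are basic, $df_{1}\wedge\cdots\wedge df_{k}$ is nonvanishing on a saturated neighborhood of $L$, and the submersion $z\mapsto(f_{1}(z),\dots,f_{k}(z))$, after shrinking so that its fibers are connected, has exactly the nearby leaves of $\mathcal{D}^{t_{0}}$ as fibers. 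These maps provide the local trivializations of $M\times\{t_{0}\}\longrightarrow(M\times\{t_{0}\})/\mathcal{D}^{t_{0}}$; no appeal to the basic fibration of $\mathcal{F}^{t}$ is needed, nor would it be correct.
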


\begin{proof}
Fix $t_{0}$ on $U$. Let $\hat{X}^{j}_{\amb}$ be a vector field on $M \times U$ which is projected to $X^{j}_{\amb}$ by the canonical projection $C^{\infty}(TM) \longrightarrow C^{\infty}(TM/T\mathcal{F})$. By the compactness of $M$, for a relative compact open neighborhood $U'$ of $t_{0}$ in $U$, each $\hat{X}^{i}_{\amb}$ generates a flow $\{\phi_{i}^{s}\}_{s \in \mathbb{R}}$ on $M \times U'$. By the proof of Theorem 4.8 of Moerdijk and Mr\v{c}un \cite{Moerdijk Mrcun}, for each two points $(x,t_{0})$ and $(y,t_{0})$ in $M \times \{t_{0}\}$, there exists a diffeomorphism $f$ of $M \times \{t_{0}\}$ which is a composition of $\phi_{1}^{s_{1}}|_{M \times \{t_{0}\}}$, $\phi_{2}^{s_{2}}|_{M \times \{t_{0}\}}$, $\cdots$, $\phi_{q}^{s_{q}}|_{M \times \{t_{0}\}}$ for some $s_{i}$ and diffeomorphisms of $M \times \{t_{0}\}$ preserving each leaf of $\mathcal{F}^{t_{0}}$ whose supports are contained in a foliated chart of $\mathcal{F}^{t_{0}}$.  Since $X^{i}_{\amb}$ is basic with respect to $(M \times U', \mathcal{F}^{\amb})$, $\phi_{i}^{s_{i}}$ preserves $\mathcal{F}^{\amb}$. We can extend diffeomorphisms of $M \times \{t_{0}\}$ which preserve each leaf of $\mathcal{F}^{t_{0}}$ and whose supports are contained in a foliated chart of $\mathcal{F}^{t_{0}}$ to diffeomorphisms of $M \times U'$ preserving each leaf of $\mathcal{F}^{\amb}$. Then $f$ extends to a diffeomorphism of $M \times U'$ preserving $\mathcal{F}^{\amb}$ as a composite of $\phi_{1}^{s_{1}}$, $\phi_{2}^{s_{2}}$, $\cdots$, $\phi_{q}^{s_{q}}$ and diffeomorphisms of $M \times U'$ preserving each leaf of $\mathcal{F}^{\amb}$. This proves (i). 

Properties (ii), (iii) and (iv) follow from (i) and the proof of Theorem 4.3 of Moerdijk and Mr\v{c}un \cite{Moerdijk Mrcun}. We write down the proof for the sake of completeness. Since $\mathcal{D}|_{M \times U'}$ is preserved by a diffeomorphism of $M \times U'$ preserving $\mathcal{F}^{\amb}$ by definition, (ii) directly follows from (i). Let $(x,t_{0})$ be a point on $M \times \{t_{0}\}$. By (ii), $\mathcal{D}|_{M \times \{t_{0}\}}$ is a vector bundle on $M \times \{t_{0}\}$. Let $Z_{1}$ and $Z_{2}$ be two local sections of $\mathcal{D}|_{M \times \{t_{0}\}}$ defined near $(x,t_{0})$. For every $f$ in $C^{\infty}_{b}(M \times U,\mathcal{F}^{\amb})$, we have $[Z_{1},Z_{2}] f = Z_{1} Z_{2} f - Z_{2} Z_{1} f = 0$. Then $[Z_{1},Z_{2}]$ is a section of $\mathcal{D}|_{M \times \{t\}}$. Hence $\mathcal{D}|_{M \times \{t\}}$ is integrable, which proves (iii). Let $L$ be a leaf of the foliation defined by $\mathcal{D}|_{M \times \{t_{0}\}}$. Let $x$ be a point in $L$. Let $k(t_{0})=\dim M - \dim D^{t_{0}}$. By definition of $\mathcal{D}$, there exists basic functions $f_{1}$, $f_{2}$, $\cdots$, $f_{k(t_{0})}$ on $(M \times U,\mathcal{F}^{\amb})$ such that $df_{1} \wedge df_{2} \wedge \cdots \wedge df_{k(t_{0})}$ is nonzero at $x$. Since each $f_{i}$ is basic, $df_{1} \wedge df_{2} \wedge \cdots \wedge df_{k(t_{0})}$ is nowhere vanishing on an open saturated neighborhood $U'$ of $L$ in $(M \times U,\mathcal{F}^{\amb})$. Then the map $\phi$ defined by
\begin{equation}
\begin{array}{cccc}
\phi \colon & U' & \longrightarrow & \mathbb{R}^{k(t_{0})} \\
            & z & \longmapsto     & (f_{1}(z),f_{2}(z),\cdots,f_{k(t_{0})}(z))
\end{array}
\end{equation}
is a submersion such that one of the fibers of $\phi$ is equal to $L$. Shrinking $U'$, we can assume that the fibers of $\phi$ are connected. Since each fiber of $\phi$ is saturated by $D^{t}$, each leaf of $D^{t}$ near $L$ coincides with a fiber of $\phi$. Then $\phi$ gives a local trivialization of a fiber bundle. Hence (iv) is proved.
\end{proof}

Since $D$ is a closed subset of $T(M \times U)$ by definition of $D$, the dimension of $D_{(x,t)}$ is upper semicontinuous with respect to $t$. If the dimension of $D_{(x,t)}$ is constant with respect to $t$, then the leaves of $\mathcal{D}$ are fibers of a smooth submersion whose restriction to $M \times \{t\}$ is equal to the canonical map $M \times \{t\} \longrightarrow (M \times \{t\})/\mathcal{D}^{t}$ for each $t$. In this case, the continuity of the \'{A}lvarez class follows without Proposition~\ref{Proposition : Fiberwise average} (see Example~\ref{Example : smooth family of Molino sheaves}). When the dimension of $D$ jumps, we have only a family of smooth proper submersions defined by $D$ which changes discontinuously with respect to $t$.

\subsection{Verification of the conditions of Proposition~\ref{Proposition : Fiberwise average}}\label{Section : Verification}

 Let $U$ be a connected open set in $\mathbb{R}^{\ell}$ which contains $0$. Let $M$ be a closed manifold, and $\{\mathcal{F}^{t}\}_{t \in U}$ be a smooth family of orientable transversely parallelizable foliations of $M$ over $U$ given by a smooth foliation $\mathcal{F}^{\amb}$ of $M \times U$. We define a distribution $D$ on $M \times U$ by \eqref{Equation : D}. By Lemma~\ref{Lemma : Molino} (iv), $D|_{M \times \{t\}}$ defines a foliation $\mathcal{D}^{t}$ of $M \times \{t\}$ whose leaves are fibers of a submersion. We denote the projection $M \times \{0\} \longrightarrow (M \times \{0\})/\mathcal{D}^{0}$ by $\pi_{\widetilde{\mathcal{F}}}^{0}$.

 To apply Proposition~\ref{Proposition : Fiberwise average} to our situation, we prepare two lemmas.

\begin{lemma}\label{Lemma : proper submersion}
There exist an open neighborhood $U'$ of $0$ in $U$ and a smooth proper submersion $\pi_{\widetilde{\mathcal{F}}}^{\amb} \colon M \times U' \longrightarrow (M \times \{0\})/\mathcal{D}^{0}$ such that 
\begin{enumerate}
\item $\pi_{\widetilde{\mathcal{F}}}^{\amb}|_{M \times \{0\}} = \pi_{\widetilde{\mathcal{F}}}^{0}$ and
\item each fiber of $\pi_{\widetilde{\mathcal{F}}}|_{M \times \{t\}}$ is saturated by the leaves of $\mathcal{F}^{t}$ for each $t$ in $U'$.
\end{enumerate}
\end{lemma}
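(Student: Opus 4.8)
The plan is to extend the basic fibration $\pi_{\widetilde{\mathcal{F}}}^{0}$ at $t=0$ to the nearby slices while keeping the target $V:=(M \times \{0\})/\mathcal{D}^{0}$ fixed, by realizing $\pi_{\widetilde{\mathcal{F}}}^{0}$ through basic functions and extending these off $M \times \{0\}$. First I would recall that $V$ is a closed manifold by Lemma~\ref{Lemma : Molino}~(iv), so I may fix a smooth embedding $\iota = (\iota_{1},\dots,\iota_{N}) \colon V \hookrightarrow \mathbb{R}^{N}$ together with a tubular neighbourhood $\mathcal{N} \subset \mathbb{R}^{N}$ of $\iota(V)$ and its retraction $r \colon \mathcal{N} \longrightarrow V$, so that $r \circ \iota = \id_{V}$. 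Each component $\iota_{m} \circ \pi_{\widetilde{\mathcal{F}}}^{0}$ is a basic function on $(M \times \{0\},\mathcal{F}^{0})$, hence constant on the leaves of $\mathcal{D}^{0}$ (the closures of the leaves of $\mathcal{F}^{0}$). The goal is to produce a smooth map $F = (\hat{\iota}_{1},\dots,\hat{\iota}_{N}) \colon M \times U' \longrightarrow \mathbb{R}^{N}$, defined on a neighbourhood $U'$ of $0$, each of whose components $\hat{\iota}_{m}$ is basic on every slice $(M \times \{t\},\mathcal{F}^{t})$ and satisfies $\hat{\iota}_{m}|_{M \times \{0\}} = \iota_{m} \circ \pi_{\widetilde{\mathcal{F}}}^{0}$; then I would set $\pi_{\widetilde{\mathcal{F}}}^{\amb} = r \circ F$. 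Note that a function on $M \times U$ is basic for $\mathcal{F}^{\amb}$ if and only if its restriction to each slice is basic for $\mathcal{F}^{t}$, because every leaf of $\mathcal{F}^{\amb}$ lies in a single slice.

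For the extension I would use the ambient basic functions produced in the proof of Lemma~\ref{Lemma : Molino}~(iv). Near each point of $M \times \{0\}$ there are basic functions $f_{1},\dots,f_{k(0)}$ on $(M \times U,\mathcal{F}^{\amb})$, with $k(0)=\dim V$, whose differentials are linearly independent and whose restriction to $M \times \{0\}$ realizes $\pi_{\widetilde{\mathcal{F}}}^{0}$ locally. Consequently, over a chart $V_{a}$ of $V$, any slice-basic function on $M \times \{0\}$ that is constant on the leaves of $\mathcal{D}^{0}$ is locally a smooth function $G$ of $(f_{1},\dots,f_{k(0)})$, and $G(f_{1},\dots,f_{k(0)})$ is then an ambient-basic extension of it to a neighbourhood $\Omega_{a} \subset M \times U$. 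Applying this both to the functions $\iota_{m} \circ \pi_{\widetilde{\mathcal{F}}}^{0}$ and to the pullbacks $\bar{\rho}_{a} \circ \pi_{\widetilde{\mathcal{F}}}^{0}$ of a partition of unity $\{\bar{\rho}_{a}\}$ on $V$ subordinate to $\{V_{a}\}$, I obtain ambient-basic local extensions $\tilde{g}_{m}^{a}$ and $\chi_{a}$, the latter extended by zero since $\bar{\rho}_{a}$ has support compactly contained in $V_{a}$. The sum $\sum_{a} \chi_{a}$ is ambient-basic and equals $1$ on $M \times \{0\}$, hence is positive on some $M \times U'$ by the compactness of $M$; normalizing gives a genuine basic partition of unity, and $\hat{\iota}_{m} := \sum_{a} \big(\chi_{a}/\sum_{b}\chi_{b}\big)\, \tilde{g}_{m}^{a}$ is ambient-basic and restricts to $\iota_{m} \circ \pi_{\widetilde{\mathcal{F}}}^{0}$ on $M \times \{0\}$. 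Shrinking $U'$ if necessary, $F(M \times U') \subset \mathcal{N}$ by continuity and the compactness of $M$, so $\pi_{\widetilde{\mathcal{F}}}^{\amb}=r \circ F$ is well defined.

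It remains to verify the two assertions. Condition (i) is immediate, since $\pi_{\widetilde{\mathcal{F}}}^{\amb}|_{M \times \{0\}} = r \circ \iota \circ \pi_{\widetilde{\mathcal{F}}}^{0} = \pi_{\widetilde{\mathcal{F}}}^{0}$. At $t=0$ the differential of $\pi_{\widetilde{\mathcal{F}}}^{\amb}|_{M \times \{t\}}$ has maximal rank $\dim V$ at every point of the compact manifold $M \times \{0\}$; by openness of the full-rank condition and the compactness of $M$ it keeps maximal rank on $M \times U'$ after shrinking $U'$, so $\pi_{\widetilde{\mathcal{F}}}^{\amb}$ is a submersion, and properness over $U'$ follows from the compactness of $M$ (its slice fibers are closed in the compact $M$, hence compact). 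Finally, each $\hat{\iota}_{m}|_{M \times \{t\}}$ is constant on the leaves of $\mathcal{F}^{t}$, so the level sets of $F|_{M \times \{t\}}$, and hence the fibers of $\pi_{\widetilde{\mathcal{F}}}^{\amb}|_{M \times \{t\}}$, are saturated by $\mathcal{F}^{t}$; this gives (ii). The main obstacle is the middle step: one must carry out the globalization \emph{within the class of basic functions}, extending slice-basic functions to ambient-basic ones and assembling them with a basic partition of unity, while ensuring the resulting map stays inside the tubular neighbourhood $\mathcal{N}$; the remaining points are a routine transversality-and-compactness argument.
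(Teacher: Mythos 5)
Your proposal is correct and follows essentially the same route as the paper's proof: both extract local ambient-basic realizations of $\pi_{\widetilde{\mathcal{F}}}^{0}$ from the definition of $D$, glue them with a partition of unity pulled back from the leaf space $(M \times \{0\})/\mathcal{D}^{0}$, embed that leaf space in Euclidean space, and retract through a tubular neighbourhood, concluding with the same maximal-rank-is-open and compactness arguments for the submersion property and the saturation of fibers. The only cosmetic differences are that you normalize the glued partition of unity and phrase the local extensions as smooth functions $G(f_{1},\dots,f_{k})$ of the ambient basic functions, whereas the paper composes functions on the leaf space with the local projections $\psi_{x}$ induced by those same basic functions --- which is the identical device.
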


\begin{proof}
Let $k=\dim M - \dim D^{0}$. For each point $(x,0)$ on $M \times \{0\}$, there exists a $k$-tuple of leafwise constant functions $f_{x1}$, $f_{x2}$, $\cdots$, $f_{xk}$ globally defined on $(M \times U,\mathcal{F}^{\amb})$ such that $(df_{x1} \wedge df_{x2} \wedge \cdots \wedge df_{xk})_{(x,0)}$ is nonzero by definition of $D$. Then $df_{x1} \wedge df_{x2} \wedge \cdots \wedge df_{xk}$ is nowhere vanishing on an open saturated neighborhood $V_{x}$ of $(x,0)$ in $(M \times U,\mathcal{F}^{\amb})$. We define a map $\phi_{x}$ by
\begin{equation}
\begin{array}{cccc}
\phi_{x} \colon & V_{x} & \longrightarrow & \mathbb{R}^{k} \\
                &  z    & \longmapsto     & (f_{x1}(z), f_{x2}(z), \cdots, f_{xk}(z)). 
\end{array}
\end{equation}
This $\phi_{x}$ is a submersion, because $df_{x1} \wedge df_{x2} \wedge \cdots \wedge df_{xk}$ has no zero on $V_{x}$. We can assume that the fibers of $\phi_{x}$ is connected after shrinking $V_{x}$. Let $V'_{x} = \phi_{x}^{-1}\big(\phi_{x}(V_{x} \cap (M \times \{0\})) \big)$. This $V'_{x}$ is also an open neighborhood of $x$ in $M \times U$. Since $\phi_{x}(V'_{x}) = \phi_{x}(V_{x})$, for each point $z$ on $V'_{x}$, there exists a leaf $L_{z}$ of $D^{0}$ such that $\phi_{x}(z)=\phi_{x}(L_{z})$. Since the fibers of $\phi_{x}$ are connected, $L_{z}$ is unique. We define a map by 
\begin{equation}
\begin{array}{cccc}
\psi_{x} \colon & V'_{x} & \longrightarrow & V'_{x} /\mathcal{D}^{0} \\
                &  z    & \longmapsto     & L_{z}. 
\end{array}
\end{equation}
$\psi_{x}$ is a smooth submersion which maps each leaf of $\mathcal{F}^{t}$ to a point. Note that $\psi_{x}|_{M \times \{0\}}$ is the restriction of the projection $\pi_{\widetilde{\mathcal{F}}}^{0}$ to $M \times \{0\}$ by definition. It follows that $\psi_{x}|_{M \times \{t\}}$ is a submersion, because $\psi_{x}|_{M \times \{t\}}$ is of the same rank with $\psi_{x}|_{M \times \{0\}}$. By the compactness of $M$, there exists a finite set of points $\{x_{j}\}_{j=1}^{n}$ such that $\cup_{j=1}^{n}V'_{x_{j}}$ contains $M \times \{0\}$. There exists an open neighborhood $U_{1}$ of $0$ in $U$ such that $M \times U_{1}$ is contained in $\cup_{j=1}^{n}V'_{x_{j}}$. Let $\{\rho_{j}\}_{j=1}^{n}$ be a partition of unity on $(M \times \{0\}) / \mathcal{D}^{0}$ with respect to a covering $\{\pi\big(V'_{x_{j}} \cap (M \times \{0\}) \big)\}_{j=1}^{n}$. We fix a smooth embedding $\iota \colon M \times \{0\} \longrightarrow \mathbb{R}^{m}$ to the $m$-dimensional Euclidean space. We define a map $\Psi_{1}$ by
\begin{equation}
\begin{array}{cccc}
\Psi_{1} \colon & M \times U_{1} & \longrightarrow & \mathbb{R}^{m} \\
            & z & \longmapsto & \sum_{j=1}^{n} \rho_{j}(\psi_{x_{sj}}(z)) \iota (\psi_{x_{sj}}(z)).
\end{array}
\end{equation}
Note that each leaf of $\mathcal{F}^{t}$ is mapped to a point by $\Psi_{1}$ by definition. $\Psi|_{M \times \{0\}}$ is equal to $j \circ \pi_{\mathcal{D}}^{0}$ by definition. Since $\psi_{x}|_{M \times \{t\}}$ is a submersion to $(M \times \{0\})/\widetilde{\mathcal{F}}$ and $\iota$ is an embedding, there exists an open neighborhood $U_{2}$ of $0$ in $U_{1}$ such that $\Psi_{1}|_{M \times \{t\}}$ is a map of constant rank for every $t$ in $U_{2}$. Hence $\Psi_{1}(M \times \{t\})$ is a smooth submanifold of $\mathbb{R}^{m}$, and $\Psi_{1}|_{M \times \{t\}}$ is a smooth submersion on the image for $t$ in $U_{2}$. There exists an open neighborhood $U'$ of $0$ in $U_{2}$ such that $\Psi_{1}(M \times \{t\})$ is the image of a section of the normal bundle in a tubular neighborhood $W$ of $\iota(M \times \{0\})$ for every $t$ in $U'$. We denote the projection $W \longrightarrow \iota(M \times \{0\})$ of the tubular neighborhood by $p_{W}$. Let $\pi_{\widetilde{\mathcal{F}}}^{\amb} = p_{W} \circ j \circ \Psi_{1}|_{M \times U'}$. Then $\pi_{\widetilde{\mathcal{F}}}^{\amb}$ is a submersion and an extension of $\pi_{\widetilde{\mathcal{F}}}^{0}$ which satisfies the given conditions.
\end{proof}

Let $\pi_{\widetilde{\mathcal{F}}}^{t}=\pi_{\widetilde{\mathcal{F}}}^{\amb}|_{M \times \{t\}}$ for $t$ in $U'$. We write $\widetilde{\mathcal{F}}^{t}$ for a foliation  of $M \times \{t\}$ defined by the fibers of $\pi_{\widetilde{\mathcal{F}}}^{t}$.

\begin{lemma}\label{Lemma : tilde g}
There exists a smooth family $\{g^{t}\}_{t \in U'}$ of Riemannian metrics on $M$ such that 
\begin{enumerate}
\item $g^{t}$ is bundle-like with respect to both of $(M,\mathcal{F}^{t})$ and $(M,\widetilde{\mathcal{F}}^{t})$ and 
\item the leaves of $\widetilde{\mathcal{F}}^{t}$ are minimal submanifolds of $(M \times \{t\},g^{t})$ for each $t$ in $U'$.
\end{enumerate}
In particular, the conditions (a) and (b) of Proposition~\ref{Proposition : Fiberwise average} are satisfied by $M \times \{0\}$, $\mathcal{F}^{0}$, $\widetilde{\mathcal{F}}^{0}$ and $g^{0}$.
\end{lemma}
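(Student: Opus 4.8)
The plan is to prescribe $g^{t}$ piece by piece along the flag $T\mathcal{F}^{t}\subset T\widetilde{\mathcal{F}}^{t}\subset TM$ and then to adjust only the part tangent to $\widetilde{\mathcal{F}}^{t}$ so as to force the fibres of $\pi_{\widetilde{\mathcal{F}}}^{t}$ to be minimal. Set $B=(M\times\{0\})/\mathcal{D}^{0}$, a closed oriented manifold by Lemma~\ref{Lemma : Molino}~(iv), and fix once and for all a Riemannian metric $g_{B}$ on $B$. Since $\pi_{\widetilde{\mathcal{F}}}^{\amb}\colon M\times U'\to B$ is a smooth submersion whose fibres are saturated by the leaves of $\mathcal{F}^{t}$ (Lemma~\ref{Lemma : proper submersion}), the quotient $TM/T\widetilde{\mathcal{F}}^{t}$ is identified with $(\pi_{\widetilde{\mathcal{F}}}^{t})^{*}TB$, and $(\pi_{\widetilde{\mathcal{F}}}^{t})^{*}g_{B}$ is a transverse metric for $\widetilde{\mathcal{F}}^{t}$; being a pullback from the base it is holonomy invariant and depends smoothly on $t$. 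On the intermediate bundle $T\widetilde{\mathcal{F}}^{t}/T\mathcal{F}^{t}$, an $\mathcal{F}^{t}$-invariant subbundle of $TM/T\mathcal{F}^{t}$, I would take the restriction of the given fibrewise metric $g^{\amb}$ on $(\nu\mathcal{F})^{\amb}$, which is $\mathcal{F}^{t}$-holonomy invariant by hypothesis; on $T\mathcal{F}^{t}$ any smooth leafwise metric will do for the moment. Declaring these three pieces orthogonal defines a smooth family $g^{t}$. Each piece is holonomy invariant, and both $T\widetilde{\mathcal{F}}^{t}/T\mathcal{F}^{t}$ and its $g^{t}$-orthogonal complement are $\mathcal{F}^{t}$-invariant, so the induced transverse metric on $TM/T\mathcal{F}^{t}$ is $\mathcal{F}^{t}$-holonomy invariant; thus $g^{t}$ is bundle-like for both $\mathcal{F}^{t}$ and $\widetilde{\mathcal{F}}^{t}$, giving condition~(i). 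This step is essentially formal.

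The real content is condition~(ii). With the metric above $\pi_{\widetilde{\mathcal{F}}}^{t}$ is a Riemannian submersion onto $(B,g_{B})$ with compact fibres, and by Rummler's formula (used already in Lemma~\ref{Lemma : Constant volume}) a fibre is minimal exactly when the characteristic $(\dim\widetilde{\mathcal{F}}^{t})$-form of $g^{t}$ is relatively closed. I would produce such a form from the tautness of the fibration: the basic forms of $\widetilde{\mathcal{F}}^{t}$ are precisely the pullbacks of forms on $B$, so its top basic cohomology is $H^{\dim B}(B)\cong\mathbb{R}\neq 0$, whence by Masa's theorem \cite{Masa} (in the form used by \'{A}lvarez L\'{o}pez \cite{Alvarez Lopez}) the foliation $\widetilde{\mathcal{F}}^{t}$ is minimizable and admits a relatively closed characteristic form, i.e.\ a Rummler form in the sense of \cite{Rummler} (see also \cite{Candel Conlon}). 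Because I may modify both the leafwise metric and the horizontal complement of $\widetilde{\mathcal{F}}^{t}$ freely without altering the transverse metric $(\pi_{\widetilde{\mathcal{F}}}^{t})^{*}g_{B}$, bundle-likeness for $\widetilde{\mathcal{F}}^{t}$ is unaffected; keeping the rescaling of the inner factor $T\widetilde{\mathcal{F}}^{t}/T\mathcal{F}^{t}$ driven by an $\mathcal{F}^{t}$-basic function preserves bundle-likeness for $\mathcal{F}^{t}$ as well. Realizing the Rummler form as the characteristic form of $g^{t}$ then makes every fibre of $\pi_{\widetilde{\mathcal{F}}}^{t}$ minimal, which is condition~(ii).

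The hard part is to carry out this minimizing modification smoothly in $t$ and compatibly with the two bundle-like conditions at once, and this is where the design of $\widetilde{\mathcal{F}}^{t}$ pays off. The naive choice $\mathcal{D}^{t}$ would fail, since its dimension jumps and the spaces of basic forms and the associated elliptic problems then vary discontinuously. By contrast $\widetilde{\mathcal{F}}^{t}$ has constant codimension and the fixed leaf space $B$, so the family of basic complexes is literally the fixed complex $\Omega^{\bullet}(B)$, and the operators whose inversion yields the minimizing data (the potential solving the tautness equation on $B$) form a smooth family; elliptic regularity on the fixed closed manifold $B$ then produces a smooth-in-$t$ solution. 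Finally, at $t=0$ one has $\widetilde{\mathcal{F}}^{0}=\mathcal{D}^{0}$, which is the basic fibration of $(M,\mathcal{F}^{0})$, so $g^{0}$ is bundle-like for $\mathcal{F}^{0}$ and for $\mathcal{F}_{b}$ with minimal leaf closures; these are exactly the conditions (a) and (b) of Proposition~\ref{Proposition : Fiberwise average}. I expect the smoothness and simultaneous compatibility of the minimizing step, rather than its existence at a single parameter, to be the principal difficulty.
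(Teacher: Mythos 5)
Your proposal follows the same skeleton as the paper's proof: first obtain tautness of the fibration foliation $\widetilde{\mathcal{F}}^{t}$, then transplant the resulting relatively closed characteristic form, via Rummler's formula \cite{Rummler}, into a metric whose transverse data are holonomy-invariant for both $\mathcal{F}^{t}$ and $\widetilde{\mathcal{F}}^{t}$; that transfer step is essentially identical to the paper's. Two smaller remarks on your first paragraph. In this section the transverse datum is the parallelism $\{X^{j}_{\amb}\}$, not a metric $g^{\amb}$, but declaring the parallelism orthonormal produces the invariant transverse metric you want, so this is cosmetic. Less cosmetic is your assertion that ``$T\widetilde{\mathcal{F}}^{t}/T\mathcal{F}^{t}$ and its $g^{t}$-orthogonal complement are $\mathcal{F}^{t}$-invariant'': the invariance of the complement is exactly what depends on the splitting of $TM/T\mathcal{F}^{t}$ you left arbitrary, and an arbitrary lift of $TM/T\widetilde{\mathcal{F}}^{t}$ gives a non-invariant direct sum. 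It is fixable: take the complement of $T\widetilde{\mathcal{F}}^{t}/T\mathcal{F}^{t}$ that is orthogonal with respect to the invariant metric defined by the parallelism; an invariant subbundle, an invariant complement and invariant metrics on the two pieces do yield an invariant direct sum.

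The genuine gap is the step you yourself flag as the principal difficulty: smooth dependence on $t$. Masa's theorem \cite{Masa} is a pure existence statement (and needs the orientability arranged in Lemma~\ref{Lemma : Reduction to TP case 2}), and your elliptic argument is not yet an argument, because the ``tautness equation on $B$'' is never written down and the natural candidate is not elliptic on $B$. Concretely, for a metric of the form $(\pi^{t}_{\widetilde{\mathcal{F}}})^{*}g_{B}\oplus g_{F}$ with horizontal distribution $H$, the fibers are minimal iff the fiberwise volume form is invariant under the holonomy of $H$; writing a candidate volume form as $e^{u}$ times the given one, the condition is $d_{H}u=\kappa$, which prescribes the entire horizontal differential of a function on $M$ --- an overdetermined holonomy problem, not the inversion of a fixed operator on $B$, and for a fixed $H$ it can be unsolvable (the holonomy need not preserve any fiberwise volume form), so one must also move $H$, which is what the Moser-type structure-group argument (the leafwise Moser argument of Lemma~\ref{Lemma : Bundle over S^1}) or Dom\'{i}nguez's tenseness theorem \cite{Dominguez} actually does; neither comes with smoothness in $t$ for free. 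The paper closes this gap with a trick you could adopt wholesale: the fibers of all the $\pi^{t}_{\widetilde{\mathcal{F}}}$ together form a single foliation of $M\times U'$ defined by the proper submersion $(\pi^{\amb}_{\widetilde{\mathcal{F}}},\pr)$, so Haefliger's Corollary 2 \cite{Haefliger} yields one ambient metric $g^{\amb}_{1}$ making all of these leaves minimal simultaneously; since the mean curvature of such a leaf inside the slice $M\times\{t\}$ is the tangential projection of its ambient mean curvature, the restrictions $g^{t}_{1}=g^{\amb}_{1}|_{M\times\{t\}}$ already make the fibers of $\pi^{t}_{\widetilde{\mathcal{F}}}$ minimal, and their characteristic forms $\widetilde{\chi}^{t}$ are smooth in $t$ by construction. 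Feeding these $\widetilde{\chi}^{t}$ into your transfer step completes the proof without any elliptic analysis.
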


\begin{proof}
It is well known that a Riemannian foliation $\mathcal{G}$ on a closed manifold $N$ defined by a proper submersion is minimizable. For example, see Corollary 2 of Haefliger \cite{Haefliger}. Then there exists a Riemannian metric $g^{\amb}_{1}$ on $M \times U'$ which is bundle-like with respect to the foliation defined by the fibers of $\pi_{\widetilde{\mathcal{F}}}$ and each leaf of $\widetilde{\mathcal{F}}$ is a minimal submanifold of $(M \times U', g^{\amb}_{1})$. Let $g^{t}_{1}=g^{\amb}_{1}|_{M \times \{t\}}$. Then the leaves of $\widetilde{\mathcal{F}}^{t}$ are minimal submanifolds of $(M \times \{t\}, g^{t}_{1})$. Let $\widetilde{\chi}^{t}$ be the characteristic form of $(M \times \{t\},\widetilde{\mathcal{F}}^{t},g_{1}^{t})$. We can take a family of metrics $\{g^{t}_{2}\}_{t \in U}$ on a family of vector bundles $T(M \times \{t\})/T\mathcal{F}^{t} \cong (T\mathcal{F}^{t})^{\perp}$ on $M$ such that $g^{t}_{2}$ is transverse with respect to both of $\mathcal{F}^{t}$ and $\widetilde{\mathcal{F}}^{t}$. We can extend the family of metrics $\{g^{t}_{2}\}_{t \in U}$ on $\{(T\mathcal{F}^{t})^{\perp}\}_{t \in U}$ to a family of Riemannian metrics $\{g^{t}\}_{t \in U}$ on $M$ so that the characteristic form of $(M,\widetilde{\mathcal{F}}^{t},g^{t})$ is equal to $\widetilde{\chi}^{t}$. Then $g^{t}$ is bundle-like with respect to both of $\mathcal{F}^{t}$ and $\widetilde{\mathcal{F}}^{t}$. By the Rummler's formula (see the second formula in the proof of Proposition 1 in Rummler \cite{Rummler} or Lemma 10.5.6 of Candel and Conlon \cite{Candel Conlon}),  the mean curvature form of a Riemannian manifold with a foliation is determined only by the characteristic form and the orthogonal complement of the tangent bundle of the foliation. Since the characteristic form of $(M \times \{t\},\widetilde{\mathcal{F}}^{t},g^{t})$ is equal to $\chi^{t}$, the leaves of $\widetilde{\mathcal{F}}^{t}$ are minimal submanifolds of $(M \times \{t\},g^{t})$.
\end{proof}

We will confirm that the conditions (c) and (d) of Proposition~\ref{Proposition : Fiberwise average} are satisfied in the present situation. 

\begin{lemma}\label{Lemma : abcd}
The conditions (c) and (d) of Proposition~\ref{Proposition : Fiberwise average} are satisfied by $(M,\mathcal{F}^{0},g^{0})$ and $\pi_{\widetilde{\mathcal{F}}}^{0}$.
\end{lemma}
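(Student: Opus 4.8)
The plan is to derive both conditions (c) and (d) from the single observation that the leaves of $\widetilde{\mathcal{F}}^{0}=\mathcal{D}^{0}$ are, by the very definition of the distribution $D$ in \eqref{Equation : D}, the common level sets of the family-basic functions, so that \emph{every} family-basic function is automatically constant along the fibers of $\pi_{\widetilde{\mathcal{F}}}^{0}$. First I would record two preliminary facts. Since each coordinate function $t_{m}$ on $U$ is constant on the leaves of $\mathcal{F}^{\amb}$ (the leaves lie in slices $M\times\{t\}$), it lies in $C^{\infty}_{b}(M\times U,\mathcal{F}^{\amb})$; hence $v\, t_{m}=0$ for every $v\in D$, so $D$ is tangent to the slices and $\mathcal{D}|_{M\times\{t\}}=\mathcal{D}^{t}$. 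Second, since $D=\bigcap_{f}\ker df$ over all family-basic $f$, we have $df|_{D}=0$, i.e. every family-basic function is constant on the leaves of $\mathcal{D}$, and therefore (restricting to $t=0$) on the fibers of $\pi_{\widetilde{\mathcal{F}}}^{0}$.

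For the transverse parallelism I would use $X^{j}=X^{j}_{\amb}|_{M\times\{0\}}$ coming from the family, with dual basic $1$-forms $\omega_{i}$. I would choose lifts $\hat{X}^{j}_{\amb}$ of $X^{j}_{\amb}$ tangent to the slices, which is possible because $(\nu\mathcal{F})^{\amb}$ has no $TU$-component by construction; these lifts are foliate for $\mathcal{F}^{\amb}$ because $X^{j}_{\amb}$ is a transverse field. A foliate field sends family-basic functions to family-basic functions (this is the standard computation $Y(\hat{X}^{j}_{\amb}f)=[Y,\hat{X}^{j}_{\amb}]f=0$ for $Y$ tangent to $\mathcal{F}^{\amb}$ and $f$ family-basic), so its flow preserves $C^{\infty}_{b}(M\times U,\mathcal{F}^{\amb})$ and hence preserves $D$; thus $\hat{X}^{j}_{\amb}$ is foliate for $\mathcal{D}$ as well. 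Restricting to the invariant slice $M\times\{0\}$, the field $\hat{X}^{j}_{\amb}|_{M\times\{0\}}$ is foliate for $\mathcal{D}^{0}$, which says precisely that its class $\proj_{\widetilde{\mathcal{F}}}X_{j}$ in $TM/T\widetilde{\mathcal{F}}^{0}$ is a transverse field on $(M,\widetilde{\mathcal{F}}^{0})$. This is condition (d).

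For condition (c) I would identify $c_{i}^{jk}$ with (the restriction of) a family-basic function. By Cartan's formula $c_{i}^{jk}$ is a constant multiple of $\omega_{i}([X^{j},X^{k}])$ (compare the computation in Lemma~\ref{Lemma : Condition d 2}). Carrying this out at the family level, the bracket $[X^{j}_{\amb},X^{k}_{\amb}]$ of transverse fields is again a transverse field for $\mathcal{F}^{\amb}$, and the pairing of the basic $1$-form $\omega^{\amb}_{i}$ with a transverse field is a family-basic function; hence the family structure function $C_{i}^{jk}$ built from $\omega^{\amb}_{i}([X^{j}_{\amb},X^{k}_{\amb}])$ is family-basic. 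Computing the bracket with the slice-tangent lifts $\hat{X}^{j}_{\amb}$ shows $C_{i}^{jk}|_{M\times\{0\}}=c_{i}^{jk}$. By the preliminary facts $C_{i}^{jk}$ is constant on the leaves of $\mathcal{D}$, so $c_{i}^{jk}$ is constant on each fiber $\widetilde{L}$ of $\pi_{\widetilde{\mathcal{F}}}^{0}$, which is condition (c).

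The only genuinely delicate points, rather than a real obstacle, are the bookkeeping: checking that $D$ is slice-tangent (so that $\mathcal{D}^{0}$ is literally $\mathcal{D}|_{M\times\{0\}}$ and the restriction arguments are legitimate), and that brackets and dualities commute with restriction to $M\times\{0\}$ so that $C_{i}^{jk}|_{M\times\{0\}}=c_{i}^{jk}$. Both reduce to the freedom to take the lifts $\hat{X}^{j}_{\amb}$ tangent to the slices. Everything else is the standard Molino fact that transverse fields form a Lie algebra and that basic forms pair with transverse fields to give basic functions, which I would invoke directly rather than reprove.
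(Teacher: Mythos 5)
Your proof is correct, and its engine is the same as the paper's: both conditions are reduced to the single observation that a smooth function on $M\times U$ which is constant on the leaves of $\mathcal{F}^{\amb}$ is, by the definition \eqref{Equation : D} of $D$, constant on each fiber of $\pi_{\widetilde{\mathcal{F}}}^{0}$. For condition (c) your argument and the paper's coincide up to Cartan's formula: the paper produces $c_{i}^{jk}$ as $d\omega_{i}(X^{j}_{\amb},X^{k}_{\amb})$ and observes that this is a family-basic function, while you use $\omega_{i}([X^{j}_{\amb},X^{k}_{\amb}])$, which is the same function up to a universal constant; your restriction bookkeeping with slice-tangent lifts is the implicit content of the paper's restriction to $M\times\{0\}$. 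For condition (d) you take a different, though equivalent, mechanism: you show that a slice-tangent lift of $X^{j}_{\amb}$ is foliate for $\mathcal{D}^{0}$ because its flow preserves family-basic functions and hence $D$, whereas the paper pairs $X^{i}_{\amb}$ with pullbacks $\pi_{\widetilde{\mathcal{F}}}^{*}\beta$ of $1$-forms on the base, notes that these pairings are family-basic, and uses their constancy on fibers as the projectability criterion; these are the two standard characterizations of a transverse field. One comparative remark: the paper's route for (d) works with forms $\pi_{\widetilde{\mathcal{F}}}^{*}\beta$ that are honestly basic on $(M\times U,\mathcal{F}^{\amb})$, while the ``family-basic dual forms'' $\omega_{i}^{\amb}$ that both you and the paper invoke in (c) need not exist as ambient basic forms --- already for a family of linear foliations on $T^{2}$ with varying slope, no $1$-form dual to the parallelism is basic for $\mathcal{F}^{\amb}$. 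This shared imprecision is harmless for both proofs, because a function on $M\times U$ is family-basic exactly when its restriction to each slice $M\times\{t\}$ is basic for $\mathcal{F}^{t}$ (the leaves lie in slices), so the slice-wise pairing computation, which is what your slice-tangent lifts actually carry out, is all that is needed.
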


\begin{proof}
Let $\{\omega_{i}\}_{i = 1}^{q}$ on $(M \times U,\mathcal{F}^{\amb})$ be the set of basic $1$-forms on $(M \times U,\mathcal{F}^{\amb})$ such that $\omega_{i}(X^{j}_{\amb})=\delta_{ij}$, where $\delta_{ij}$ is the Kronecker's delta. $d\omega_{i}$ is written as 
\begin{equation}
d\omega_{i} = \sum_{1 \leq j < k \leq q} c_{i}^{jk} \omega_{j} \wedge \omega_{k}
\end{equation}
for some functions $c_{i}^{jk}$ on $M \times U$. We have
\begin{equation}
d\omega_{i}(X^{j}_{\amb},X^{k}_{\amb}) = c_{i}^{jk}.
\end{equation}
Since $d\omega_{i}$ and each $\omega_{j}$ are basic forms and $X^{j}_{\amb}$ is a transverse field on $(M \times U,\mathcal{F}^{\amb})$, $c_{i}^{jk}$ is a basic function on $(M \times U,\mathcal{F}^{\amb})$. Hence the restriction of $c_{i}^{jk}$ to each fiber of $\widetilde{\mathcal{F}}^{0}$ is a constant by definition of $\mathcal{D}$. This proves that the condition (c) is satisfied. 

Let $\beta$ be a $1$-form on $(M \times \{0\}) /\widetilde{\mathcal{F}}^{0}$. Then $\pi_{\widetilde{\mathcal{F}}}^{*}\beta$ is a basic $1$-forms on $(M \times U,\mathcal{F}^{\amb})$. It follows that $\pi_{\widetilde{\mathcal{F}}}^{*}\beta(X^{i}_{\amb})$ is a global basic function on $(M \times U,\mathcal{F}^{\amb})$. Hence the restriction of $\pi_{\widetilde{\mathcal{F}}}^{*}\beta(X^{i}_{\amb})$ to each fiber of $\widetilde{\mathcal{F}}^{0}$ is a constant by definition of $\mathcal{D}$. Hence the image of $X^{i}_{\amb}|_{M \times \{0\}}$ by the canonical projection $C^{\infty}(T(M \times \{0\})/T\mathcal{F}^{0}) \longrightarrow C^{\infty}(T(M \times \{0\})/T\widetilde{\mathcal{F}}^{0})$ is a transverse field on $(M \times \{0\},\widetilde{\mathcal{F}}^{0})$. This proves that the condition (d) is satisfied.
\end{proof}

\subsection{Proof of the continuity Theorem~\ref{Theorem : Continuity}}\label{Section : Proof of continuity theorem}

Let $M$ be a closed manifold and $\{\mathcal{F}^{t}\}_{t \in U}$ be a smooth family of transversely parallelizable foliations of $M$ over $U$. We consider the distribution $D$ defined by the equation \eqref{Equation : D}. By Lemma~\ref{Lemma : Molino}, a proper submersion $\pi_{\widetilde{\mathcal{F}}}^{0} \colon M \times \{0\} \longrightarrow (M \times \{0\})/ \mathcal{D}^{0}$ is defined by the restriction of $D$ to $M \times \{0\}$. By Lemma~\ref{Lemma : proper submersion}, we can take an open neighborhood $U'$ of $0$ in $U$ and a proper submersion $\pi_{\widetilde{\mathcal{F}}}^{\amb} \colon M \times U' \longrightarrow (M \times \{0\})/ \mathcal{D}^{0}$ such that $\pi_{\widetilde{\mathcal{F}}}|_{M \times \{0\}}=\pi_{\widetilde{\mathcal{F}}}^{0}$ and each fiber of $\pi_{\widetilde{\mathcal{F}}}|_{M \times \{t\}}$ are saturated by the leaves of $\mathcal{F}^{t}$ for each $t$ in $U'$. We denote the foliation of $M \times \{t\}$ defined by the fibers of $\pi_{\widetilde{\mathcal{F}}}|_{M \times \{t\}}$ by $\widetilde{\mathcal{F}}^{t}$. By Lemma~\ref{Lemma : tilde g}, we take a smooth family of metrics $\{g^{t}\}_{t \in U'}$ on $M$ such that the fibers of $\pi_{\widetilde{\mathcal{F}}}^{t}$ are minimal submanifolds of $(M \times \{t\}, g^{t})$, and $g^{t}$ is bundle-like with respect to both of $(M,\mathcal{F}^{t})$ and $(M,\widetilde{\mathcal{F}}^{t})$ for each $t$ in $U'$. We denote the mean curvature form and the \'{A}lvarez form of $(M,\mathcal{F}^{t},g^{t})$ by $\kappa^{t}$ and $\kappa_{b}^{t}$, respectively. We define $\widetilde{\kappa}_{b}^{t}=\rho_{\widetilde{\mathcal{F}}}(\kappa^{t})$.

Let us show Theorem~\ref{Theorem : Continuity} by using Proposition~\ref{Proposition : Fiberwise average} and Corollary 4.23 of Dom\'{i}nguez in \cite{Dominguez}.

\begin{proof}[Proof of Theorem~\ref{Theorem : Continuity}]
By Lemma~\ref{Lemma : Reduction to TP case 2}, it suffices to show the case where $M$, $\widetilde{\mathcal{F}}^{0}$ and the basic fibration of $(M,\mathcal{F}^{0})$ are orientable. By Lemmas~\ref{Lemma : tilde g} and \ref{Lemma : abcd}, the conditions (a), (b), (c) and (d) of Proposition~\ref{Proposition : Fiberwise average} are satisfied. Hence, by Proposition~\ref{Proposition : Fiberwise average}, $\widetilde{\kappa}_{b}^{0}$ is closed and $[\widetilde{\kappa}_{b}^{0}]=[\kappa_{b}^{0}]$. By Corollary 4.23 of Dom\'{i}nguez \cite{Dominguez}, we can modify the component $g^{t}|_{T\mathcal{F}^{t} \otimes T\mathcal{F}^{t}}$ along the leaves of $\{g^{t}\}_{t \in T'}$ so that $\kappa^{0}=\widetilde{\kappa}_{b}^{0}$. Note that $\widetilde{\kappa}_{b}^{t}$ may not be closed for nonzero parameter $t$ in $T'$.

For a smooth loop $\gamma$ in $M$, we have the following evaluation:
\begin{align*}
& \,\, \Big| \int_{\gamma} \Big( \widetilde{\kappa}_{b}^{0} - \kappa_{b}^{t} \Big) \Big| \\
\leq & \,\, \Big| \int_{\gamma} \Big( \widetilde{\kappa}_{b}^{0} - \widetilde{\kappa}_{b}^{t} \Big) \Big| + \Big| \int_{\gamma} \Big( \widetilde{\kappa}_{b}^{t} - \kappa_{b}^{t} \Big) \Big| \\
= & \,\, \Big| \int_{\gamma} \Big( \widetilde{\kappa}_{b}^{0} - \widetilde{\kappa}_{b}^{t} \Big) \Big| + \Big| \int_{\gamma} \rho_{\mathcal{F}} \Big( \widetilde{\kappa}_{b}^{t} - \kappa^{t} \Big) \Big| \\
\leq & \,\, \Big| \int_{\gamma} \Big( \widetilde{\kappa}_{b}^{0} - \widetilde{\kappa}_{b}^{t} \Big) \Big| + \Big( \sup_{s \in S^1} \Big|\Big| \frac{d\gamma}{ds}(s) \Big|\Big| \Big) \Big( \sup_{x \in M \times \{t\}} \Big|\Big| \widetilde{\kappa}_{b}^{t}(x) - \kappa^{t}(x) \Big|\Big| \Big),
\end{align*}
where $|| \cdot ||$ is a norm induced by $g^{t}$. Since $\widetilde{\kappa}_{b}^{t}$ converges to $\widetilde{\kappa}_{b}^{0}=\kappa^{0}$, the first and the second term converges to $0$ as $t$ tends to $0$. Then we have $\lim_{t \rightarrow 0} \int_{\gamma} \kappa_{b}^{t} = \int_{\gamma} \widetilde{\kappa}_{b}^{0}$ and the proof is completed.
\end{proof}

By Proposition 5.3 of \'{A}lvarez L\'{o}pez \cite{Alvarez Lopez}, every closed $1$-form cohomologous to the \'{A}lvarez class of $(M,\mathcal{F}^{0})$ is realized as the \'{A}lvarez form of $(M,\mathcal{F}^{0},g)$ for some bundle-like metric $g$. Proposition 5.3 of \'{A}lvarez L\'{o}pez is simpler to prove than the Corollary 4.23 of Dom\'{i}nguez \cite{Dominguez}, which is used in the proof of Theorem~\ref{Theorem : Continuity} above. But we do not know if we can replace Corollary 4.23 of \cite{Dominguez} by Proposition 5.3 of \cite{Alvarez Lopez} in the proof of Theorem~\ref{Theorem : Continuity}. 

In fact, by Proposition 5.3 of \cite{Alvarez Lopez}, we can modify the component $g^{t}|_{T\mathcal{F}^{t} \otimes T\mathcal{F}^{t}}$ along the leaves of $\{g^{t}\}_{t \in T'}$ so that $\kappa^{0}_{b}=\widetilde{\kappa}_{b}^{0}$. But we do not know if $\Big| \int_{\gamma} \Big( \widetilde{\kappa}_{b}^{t} - \kappa_{b}^{t} \Big) \Big|$ converges to $0$ as $t$ goes to $0$ here. Note that $\Big| \int_{\gamma} \Big( \widetilde{\kappa}_{b}^{t} - \kappa^{t} \Big) \Big|$ may not converge to~$0$ as $t$ goes to $0$ in this situation. This is because $\kappa_{b}^{t}$ is defined by integrating the mean curvature form on each leaf closure of $\mathcal{F}^{t}$ and the dimension of the closures of leaves of $\mathcal{F}^{t}$ can change on any small open neighborhood of $0$.

\section{Examples of Riemannian foliations}

\addtocontents{toc}{\protect\setcounter{tocdepth}{1}}
\subsection{A special case where the families of Molino's commuting sheaves are smooth}\label{Example : smooth family of Molino sheaves}
Let $\{\mathcal{F}^{t}\}_{t \in T}$ be a family of Riemannian foliations on a closed manifold $M$. If the dimension of the closures of generic leaves of $\mathcal{F}^{t}$ is constant with respect to $t$, then the family of Molino's commuting sheaves of $\{\mathcal{F}^{t}\}$ is smooth (see pages~125--130 and Section 5.3 of Molino \cite{Molino} for the definition of the Molino's commuting sheaf of a Riemannian foliation). Since the \'{A}lvarez class of $(M,\mathcal{F}^{t})$ is computed from the holonomy homomorphism of the Molino's commuting sheaf of $(M,\mathcal{F}^{t})$ by Theorem 1.1 of \'{A}lvarez L\'{o}pez \cite{Alvarez Lopez 2}, the \'{A}lvarez classes of this family are continuous with respect to $t$. Our main continuity Theorem~\ref{Theorem : Continuity} is essential in the case where the dimension of the closures of leaves changes. If the dimension of the closures of leaves changes, we cannot prove the continuity of the \'{A}lvarez class as above or directly by an application of deformation theory to Molino's commuting sheaves. In fact, the family of the Molino's commuting sheaves must be discontinuous in this case, because the rank of the Molino's commuting sheaf is equal to the dimension of the closures of generic leaves of $\mathcal{F}^{t}$.

\subsection{Families of homogeneous Lie foliations}
Let $p \colon L \longrightarrow G$ be a surjective homomorphism between Lie groups. Let $\Gamma$ be a uniform lattice of $L$. A foliation $\mathcal{F}$ on a homogeneous space $\Gamma \backslash L$ is induced by the fibers of $p$. This $\mathcal{F}$ has a structure of a $G$-Lie foliation. Such $\mathcal{F}$ is called a homogeneous $G$-Lie foliation. By deforming $L$, $G$, $p$ and $\Gamma$, we may produce families of homogeneous Lie foliations. The \'{A}lvarez class is computed in terms of Lie theory by the interpretation of the \'{A}lvarez class as a first secondary characteristic class of Molino's commuting sheaf by \'{A}lvarez L\'{o}pez (Theorem 1.1 of \cite{Alvarez Lopez 2}). But the author does not know an example of a family of Riemannian foliations whose \'{A}lvarez classes change nontrivially obtained in this way. In many cases, the \'{A}lvarez class does not change as we will see in the following. If $G$ is nilpotent, then $\mathcal{F}$ is of polynomial growth. Then the \'{A}lvarez class does not change under deformations of $\mathcal{F}$ by Corollary~\ref{Corollary : Invariance 1}. If $L$ is solvable, then $\Gamma$ is polycyclic (see Proposition 3.7 of Raghunathan \cite{Raghunathan}). Then the \'{A}lvarez class does not change under deformations of $\mathcal{F}$ by Corollary~\ref{Corollary : Invariance 1}. If $G$ is semisimple, the structural Lie algebra of the Lie foliation defined on the closure of leaves of $\mathcal{F}$ is semisimple. Then $\mathcal{F}$ is minimizable by Theorem 2 of Nozawa \cite{Nozawa}.

\subsection{Meigniez's examples: Families of solvable Lie foliations}
Meigniez constructed plenty of families of solvable Lie foliations which are not homogeneous by a surgery construction on homogeneous Lie foliations in \cite{Meigniez} (see also \cite{Meigniez 2}, in particular, pages 119--122 for an explicit example). These families contain many examples of families of Lie foliations whose \'{A}lvarez classes change nontrivially.

\subsection{Basic cohomology of Riemannian foliations is not invariant under deformations}\label{Example : Basic}

We present an example of a family of Riemannian foliations whose basic cohomology changes. Let $M = S^1 \times S^3$. Let $\sigma$ be the free $S^1$-action on $S^3$ whose orbits are fibers of the Hopf fibration. Let $\rho$ be the $T^2$-action on $M$ which is the product of the principal $S^1$-action on the first $S^1$-component and $\sigma$. For each element $v$ of $\Lie(T^2) - \{0\}$, let $\mathcal{F}_{v}$ be the Riemannian flow on $M$ whose leaves are the orbits of an $\mathbb{R}$-subaction of $\rho$ whose infinitesimal action is given by $v$. Then we have a smooth family $\{\mathcal{F}_{v}\}_{v \in \Lie(T^2) - \{0\}}$ of Riemannian flows on $M$. Let $v_{1}$ and $v_{2}$ be the infinitesimal generators of the principal $S^1$-action on the first $S^1$-component and $\sigma$, respectively. Since $M / \mathcal{F}_{v_{1}} = S^3$ and $M / \mathcal{F}_{v_{2}} = M / \sigma = S^1 \times S^2$, clearly the dimension of $H^{1}_{b}(M / \mathcal{F}_{v_{1}})$ and $H^{1}_{b}(M / \mathcal{F}_{v_{2}})$ are different.

\section{Examples of non-Riemannian foliations}\label{Section : non Riemannian}

\subsection{Turbulization}

Let $\mathcal{F}^{1}$ be a product foliation $T^{2} = \bigsqcup_{\theta_{1} \in S^{1}} \{\theta_{1}\} \times S^1$ on $T^{2}$. Let $\mathcal{F}^{0}$ be a turbulization of $\mathcal{F}^{1}$ along a closed curve $\{\frac{1}{2}\} \times S^{1}$. This $\mathcal{F}^{0}$ is not minimizable by a theorem of Sullivan (see \cite{Sullivan 2}), because $\mathcal{F}^{0}$ has a tangent homology defined by the Reeb component. Note that $\mathcal{F}^{0}$ is a limit of $1$-dimensional foliations on $T^{2}$ which are diffeomorphic to $\mathcal{F}^{1}$. Thus we have a family of $1$-dimensional foliations on $T^{2}$ parametrized by $[0,1]$ such that only $\mathcal{F}^{0}$ is not minimizable.

\vspace{10pt}

\begin{center}
\unitlength 0.1in
\begin{picture}( 32.0000,  8.8000)( 16.0000,-14.8000)
%
\special{pn 8}%
\special{pa 4000 600}%
\special{pa 4800 600}%
\special{pa 4800 1400}%
\special{pa 4000 1400}%
\special{pa 4000 600}%
\special{fp}%
%
\special{pn 8}%
\special{pa 2800 600}%
\special{pa 3600 600}%
\special{pa 3600 1400}%
\special{pa 2800 1400}%
\special{pa 2800 600}%
\special{fp}%
%
\special{pn 8}%
\special{pa 1600 600}%
\special{pa 2400 600}%
\special{pa 2400 1400}%
\special{pa 1600 1400}%
\special{pa 1600 600}%
\special{fp}%
%
\special{pn 8}%
\special{pa 4000 1200}%
\special{pa 4800 1200}%
\special{fp}%
%
\special{pn 8}%
\special{pa 4000 1100}%
\special{pa 4800 1100}%
\special{fp}%
%
\special{pn 8}%
\special{pa 4000 1300}%
\special{pa 4800 1300}%
\special{fp}%
%
\special{pn 8}%
\special{pa 4000 700}%
\special{pa 4800 700}%
\special{fp}%
%
\special{pn 8}%
\special{pa 4000 900}%
\special{pa 4800 900}%
\special{fp}%
%
\special{pn 8}%
\special{pa 4000 1000}%
\special{pa 4800 1000}%
\special{fp}%
%
\special{pn 8}%
\special{pa 4000 800}%
\special{pa 4800 800}%
\special{fp}%
%
\special{pn 8}%
\special{pa 1800 600}%
\special{pa 1800 1400}%
\special{fp}%
%
\special{pn 8}%
\special{pa 2200 600}%
\special{pa 2200 1400}%
\special{fp}%
%
\special{pn 8}%
\special{pa 1600 1200}%
\special{pa 1632 1200}%
\special{pa 1664 1198}%
\special{pa 1694 1188}%
\special{pa 1718 1166}%
\special{pa 1736 1140}%
\special{pa 1750 1110}%
\special{pa 1760 1080}%
\special{pa 1766 1048}%
\special{pa 1770 1030}%
\special{sp}%
%
\special{pn 8}%
\special{pa 2400 1000}%
\special{pa 2368 1000}%
\special{pa 2336 998}%
\special{pa 2306 988}%
\special{pa 2282 966}%
\special{pa 2264 940}%
\special{pa 2252 910}%
\special{pa 2242 880}%
\special{pa 2234 848}%
\special{pa 2230 830}%
\special{sp}%
%
\special{pn 8}%
\special{pa 1600 1000}%
\special{pa 1632 1000}%
\special{pa 1664 998}%
\special{pa 1694 988}%
\special{pa 1718 966}%
\special{pa 1736 940}%
\special{pa 1750 910}%
\special{pa 1760 880}%
\special{pa 1766 848}%
\special{pa 1770 830}%
\special{sp}%
%
\special{pn 8}%
\special{pa 1600 1400}%
\special{pa 1632 1400}%
\special{pa 1664 1398}%
\special{pa 1694 1388}%
\special{pa 1718 1366}%
\special{pa 1736 1340}%
\special{pa 1750 1310}%
\special{pa 1760 1280}%
\special{pa 1766 1248}%
\special{pa 1770 1230}%
\special{sp}%
%
\special{pn 8}%
\special{pa 1600 800}%
\special{pa 1632 800}%
\special{pa 1664 798}%
\special{pa 1694 788}%
\special{pa 1718 766}%
\special{pa 1736 740}%
\special{pa 1750 710}%
\special{pa 1760 680}%
\special{pa 1766 648}%
\special{pa 1770 630}%
\special{sp}%
%
\special{pn 8}%
\special{pa 2400 800}%
\special{pa 2368 800}%
\special{pa 2336 798}%
\special{pa 2306 788}%
\special{pa 2282 766}%
\special{pa 2264 740}%
\special{pa 2252 710}%
\special{pa 2242 680}%
\special{pa 2234 648}%
\special{pa 2230 630}%
\special{sp}%
%
\special{pn 8}%
\special{pa 2400 1400}%
\special{pa 2368 1400}%
\special{pa 2336 1398}%
\special{pa 2306 1388}%
\special{pa 2282 1366}%
\special{pa 2264 1340}%
\special{pa 2252 1310}%
\special{pa 2242 1280}%
\special{pa 2234 1248}%
\special{pa 2230 1230}%
\special{sp}%
%
\special{pn 8}%
\special{pa 2400 1200}%
\special{pa 2368 1200}%
\special{pa 2336 1198}%
\special{pa 2306 1188}%
\special{pa 2282 1166}%
\special{pa 2264 1140}%
\special{pa 2252 1110}%
\special{pa 2242 1080}%
\special{pa 2234 1048}%
\special{pa 2230 1030}%
\special{sp}%
%
\special{pn 8}%
\special{pa 1840 890}%
\special{pa 1846 858}%
\special{pa 1850 826}%
\special{pa 1856 796}%
\special{pa 1864 764}%
\special{pa 1872 734}%
\special{pa 1884 704}%
\special{pa 1898 676}%
\special{pa 1918 646}%
\special{pa 1942 622}%
\special{pa 1970 604}%
\special{pa 1998 600}%
\special{pa 2028 610}%
\special{pa 2056 632}%
\special{pa 2078 658}%
\special{pa 2094 686}%
\special{pa 2106 716}%
\special{pa 2116 746}%
\special{pa 2124 776}%
\special{pa 2132 808}%
\special{pa 2138 840}%
\special{pa 2146 870}%
\special{pa 2150 890}%
\special{sp}%
%
\special{pn 8}%
\special{pa 1840 1290}%
\special{pa 1846 1258}%
\special{pa 1850 1226}%
\special{pa 1856 1196}%
\special{pa 1864 1164}%
\special{pa 1872 1134}%
\special{pa 1884 1104}%
\special{pa 1898 1076}%
\special{pa 1918 1046}%
\special{pa 1942 1022}%
\special{pa 1970 1004}%
\special{pa 1998 1000}%
\special{pa 2028 1010}%
\special{pa 2056 1032}%
\special{pa 2078 1058}%
\special{pa 2094 1086}%
\special{pa 2106 1116}%
\special{pa 2116 1146}%
\special{pa 2124 1176}%
\special{pa 2132 1208}%
\special{pa 2138 1240}%
\special{pa 2146 1270}%
\special{pa 2150 1290}%
\special{sp}%
%
\special{pn 8}%
\special{pa 1840 1090}%
\special{pa 1846 1058}%
\special{pa 1850 1026}%
\special{pa 1856 996}%
\special{pa 1864 964}%
\special{pa 1872 934}%
\special{pa 1884 904}%
\special{pa 1898 876}%
\special{pa 1918 846}%
\special{pa 1942 822}%
\special{pa 1970 804}%
\special{pa 1998 800}%
\special{pa 2028 810}%
\special{pa 2056 832}%
\special{pa 2078 858}%
\special{pa 2094 886}%
\special{pa 2106 916}%
\special{pa 2116 946}%
\special{pa 2124 976}%
\special{pa 2132 1008}%
\special{pa 2138 1040}%
\special{pa 2146 1070}%
\special{pa 2150 1090}%
\special{sp}%
%
\special{pn 8}%
\special{pa 1880 1400}%
\special{pa 1886 1368}%
\special{pa 1892 1338}%
\special{pa 1898 1306}%
\special{pa 1910 1276}%
\special{pa 1924 1248}%
\special{pa 1948 1222}%
\special{pa 1976 1204}%
\special{pa 2006 1202}%
\special{pa 2036 1214}%
\special{pa 2060 1238}%
\special{pa 2076 1268}%
\special{pa 2086 1298}%
\special{pa 2096 1328}%
\special{pa 2106 1358}%
\special{pa 2116 1388}%
\special{pa 2120 1400}%
\special{sp}%
%
\special{pn 8}%
\special{pa 2800 1000}%
\special{pa 2832 998}%
\special{pa 2864 998}%
\special{pa 2896 1000}%
\special{pa 2930 1002}%
\special{pa 2960 1000}%
\special{pa 2986 984}%
\special{pa 3006 956}%
\special{pa 3022 928}%
\special{pa 3036 898}%
\special{pa 3048 868}%
\special{pa 3058 840}%
\special{pa 3070 810}%
\special{pa 3084 780}%
\special{pa 3102 752}%
\special{pa 3126 728}%
\special{pa 3154 708}%
\special{pa 3182 700}%
\special{pa 3214 704}%
\special{pa 3244 718}%
\special{pa 3272 740}%
\special{pa 3294 764}%
\special{pa 3310 792}%
\special{pa 3322 822}%
\special{pa 3332 852}%
\special{pa 3342 884}%
\special{pa 3354 916}%
\special{pa 3370 946}%
\special{pa 3392 970}%
\special{pa 3416 988}%
\special{pa 3446 998}%
\special{pa 3478 1000}%
\special{pa 3512 1000}%
\special{pa 3544 1000}%
\special{pa 3576 1000}%
\special{pa 3600 1000}%
\special{sp}%
%
\special{pn 8}%
\special{pa 2800 1200}%
\special{pa 2832 1198}%
\special{pa 2864 1198}%
\special{pa 2896 1200}%
\special{pa 2930 1202}%
\special{pa 2960 1200}%
\special{pa 2986 1184}%
\special{pa 3006 1156}%
\special{pa 3022 1128}%
\special{pa 3036 1098}%
\special{pa 3048 1068}%
\special{pa 3058 1040}%
\special{pa 3070 1010}%
\special{pa 3084 980}%
\special{pa 3102 952}%
\special{pa 3126 928}%
\special{pa 3154 908}%
\special{pa 3182 900}%
\special{pa 3214 904}%
\special{pa 3244 918}%
\special{pa 3272 940}%
\special{pa 3294 964}%
\special{pa 3310 992}%
\special{pa 3322 1022}%
\special{pa 3332 1052}%
\special{pa 3342 1084}%
\special{pa 3354 1116}%
\special{pa 3370 1146}%
\special{pa 3392 1170}%
\special{pa 3416 1188}%
\special{pa 3446 1198}%
\special{pa 3478 1200}%
\special{pa 3512 1200}%
\special{pa 3544 1200}%
\special{pa 3576 1200}%
\special{pa 3600 1200}%
\special{sp}%
%
\special{pn 8}%
\special{pa 2800 1400}%
\special{pa 2832 1398}%
\special{pa 2864 1398}%
\special{pa 2896 1400}%
\special{pa 2930 1402}%
\special{pa 2960 1400}%
\special{pa 2986 1384}%
\special{pa 3006 1356}%
\special{pa 3022 1328}%
\special{pa 3036 1298}%
\special{pa 3048 1268}%
\special{pa 3058 1240}%
\special{pa 3070 1210}%
\special{pa 3084 1180}%
\special{pa 3102 1152}%
\special{pa 3126 1128}%
\special{pa 3154 1108}%
\special{pa 3182 1100}%
\special{pa 3214 1104}%
\special{pa 3244 1118}%
\special{pa 3272 1140}%
\special{pa 3294 1164}%
\special{pa 3310 1192}%
\special{pa 3322 1222}%
\special{pa 3332 1252}%
\special{pa 3342 1284}%
\special{pa 3354 1316}%
\special{pa 3370 1346}%
\special{pa 3392 1370}%
\special{pa 3416 1388}%
\special{pa 3446 1398}%
\special{pa 3478 1400}%
\special{pa 3512 1400}%
\special{pa 3544 1400}%
\special{pa 3576 1400}%
\special{pa 3600 1400}%
\special{sp}%
%
\special{pn 8}%
\special{pa 3600 1400}%
\special{pa 3600 1400}%
\special{pa 3600 1400}%
\special{pa 3600 1400}%
\special{pa 3600 1400}%
\special{pa 3600 1400}%
\special{pa 3600 1400}%
\special{pa 3600 1400}%
\special{pa 3600 1400}%
\special{pa 3600 1400}%
\special{pa 3600 1400}%
\special{pa 3600 1400}%
\special{pa 3600 1400}%
\special{pa 3600 1400}%
\special{pa 3600 1400}%
\special{pa 3600 1400}%
\special{pa 3600 1400}%
\special{pa 3600 1400}%
\special{pa 3600 1400}%
\special{pa 3600 1400}%
\special{pa 3600 1400}%
\special{pa 3600 1400}%
\special{pa 3600 1400}%
\special{pa 3600 1400}%
\special{pa 3600 1400}%
\special{pa 3600 1400}%
\special{pa 3600 1400}%
\special{pa 3600 1400}%
\special{pa 3600 1400}%
\special{pa 3600 1400}%
\special{pa 3600 1400}%
\special{pa 3600 1400}%
\special{pa 3600 1400}%
\special{pa 3600 1400}%
\special{pa 3600 1400}%
\special{pa 3600 1400}%
\special{sp}%
\put(19.0000,-16.5000){\makebox(0,0)[lb]{$\mathcal{F}^{0}$}}%
\put(43.0000,-16.5000){\makebox(0,0)[lb]{$\mathcal{F}^{1}$}}%
%
\special{pn 8}%
\special{pa 2800 800}%
\special{pa 2832 800}%
\special{pa 2864 800}%
\special{pa 2896 800}%
\special{pa 2930 802}%
\special{pa 2960 800}%
\special{pa 2986 790}%
\special{pa 3008 772}%
\special{pa 3024 746}%
\special{pa 3036 714}%
\special{pa 3046 680}%
\special{pa 3054 640}%
\special{pa 3060 600}%
\special{sp}%
%
\special{pn 8}%
\special{pa 3600 800}%
\special{pa 3570 800}%
\special{pa 3538 800}%
\special{pa 3504 800}%
\special{pa 3472 802}%
\special{pa 3440 800}%
\special{pa 3414 790}%
\special{pa 3394 772}%
\special{pa 3378 746}%
\special{pa 3366 714}%
\special{pa 3356 680}%
\special{pa 3348 640}%
\special{pa 3340 600}%
\special{sp}%
%
\special{pn 8}%
\special{pa 3280 1400}%
\special{pa 3256 1374}%
\special{pa 3230 1354}%
\special{pa 3204 1342}%
\special{pa 3176 1342}%
\special{pa 3148 1354}%
\special{pa 3120 1376}%
\special{pa 3092 1400}%
\special{pa 3090 1400}%
\special{sp}%
\end{picture}%

\end{center}

\vspace{5pt}

\subsection{Deformation of an example of Candel and Conlon}\label{Example : Candel Conlon}

We present an example of a family $\{\mathcal{F}^{t}\}_{t \in [0,1]}$ of $1$-dimensional foliations on $S^3$ such that $\mathcal{F}^{0}$ is not minimizable and $\mathcal{F}^{t}$ is minimizable if $t$ is nonzero. In a similar way, we will construct a family $\{\mathcal{H}^{t}\}_{t \in [0,1]}$ of $1$-dimensional foliations on $S^3$ such that $\mathcal{H}^{1}$ is minimizable and $\mathcal{H}^{t}$ is minimizable if $t$ is not equal to $1$. Here $\mathcal{F}^{0}$ and $\mathcal{H}^{0}$ are the example constructed by Candel and Conlon in Example 10.5.19 of \cite{Candel Conlon}. These examples show that the minimizability is neither open nor closed in families of foliations in general.

We restate the construction of the example $\mathcal{F}^{0}$ of Candel and Conlon here. We consider the $2$-dimensional product foliation $S^{1} \times D^{2} = \bigsqcup_{t \in S^{1}} \{t\} \times D^{2}$ on the solid torus. Turbulizing this product foliation around the axis $S^{1} \times \{0\}$, we obtain a singular foliation $\mathcal{S}$ on $S^{1} \times D^{2}$ whose leaves are trumpet-like surfaces and the axis $S^{1} \times \{0\}$. We foliate $S^{1} \times D^{2}$ by a $1$-dimensional foliation $\mathcal{G}^{0}$ so that each leaf of $\mathcal{S}$ is saturated by leaves of $\mathcal{G}^{0}$ and the leaves of $\mathcal{G}^{0}$ are transverse to the boundary of the solid torus. We obtain a foliation $\mathcal{F}^{0}$ on $S^{3}$ by pasting two copies of $(S^{1} \times D^{2},\mathcal{G}^{0})$. This $(S^3,\mathcal{F}^{0})$ is non-minimizable as Candel and Conlon showed by a theorem of Sullivan in Example 10.5.19 of \cite{Candel Conlon}. 

We construct $\mathcal{F}^{t}$ for nonzero $t$ in $[0,1]$. Let $L_{1}$ and $L_{2}$ be two closed leaves of $\mathcal{F}^{0}$ which are axes of solid tori. For $t$ in $[0,1]$, let $\mathcal{F}^{t}$ be the smooth foliation obtained from $\mathcal{F}^{0}$ by replacing both of $L_{1}$ and $L_{2}$ to solid tori $K^{t}_{1}$ and $K^{t}_{2}$ of radius $t$ with the product foliation $K^{t}_{i} = S^{1} \times D^{2} = \bigsqcup_{x \in D^{2}} S^{1} \times \{x\}$ for $i=1$ and $2$. Thus we have a smooth family $\{\mathcal{F}^{t}\}_{t \in [0,1]}$ of $1$-dimensional foliations on $S^{3}$.

\vspace{30pt}

\begin{center}
\unitlength 0.1in
\begin{picture}( 54.4700, 20.3000)(  3.9100,-30.3000)
%
\special{pn 8}%
\special{ar 716 1912 326 902  0.0000000 6.2831853}%
%
\special{pn 8}%
\special{ar 716 1912 326 902  0.0000000 6.2831853}%
%
\special{pn 8}%
\special{ar 716 1912 326 902  0.0000000 6.2831853}%
%
\special{pn 8}%
\special{ar 3596 1912 326 902  0.0000000 6.2831853}%
%
\special{pn 8}%
\special{ar 3596 1912 326 902  0.0000000 6.2831853}%
%
\special{pn 8}%
\special{pa 718 1010}%
\special{pa 2478 1010}%
\special{fp}%
\special{pa 718 2810}%
\special{pa 2486 2810}%
\special{fp}%
\special{pa 3606 1010}%
\special{pa 5526 1010}%
\special{fp}%
\special{pa 3606 2810}%
\special{pa 5526 2810}%
\special{fp}%
%
\special{pn 8}%
\special{ar 5518 1900 320 890  1.5707963 1.6699699}%
\special{ar 5518 1900 320 890  1.7294740 1.8286476}%
\special{ar 5518 1900 320 890  1.8881517 1.9873253}%
\special{ar 5518 1900 320 890  2.0468294 2.1460029}%
\special{ar 5518 1900 320 890  2.2055071 2.3046806}%
\special{ar 5518 1900 320 890  2.3641848 2.4633583}%
\special{ar 5518 1900 320 890  2.5228624 2.6220360}%
\special{ar 5518 1900 320 890  2.6815401 2.7807137}%
\special{ar 5518 1900 320 890  2.8402178 2.9393914}%
\special{ar 5518 1900 320 890  2.9988955 3.0980691}%
\special{ar 5518 1900 320 890  3.1575732 3.2567467}%
\special{ar 5518 1900 320 890  3.3162509 3.4154244}%
\special{ar 5518 1900 320 890  3.4749286 3.5741021}%
\special{ar 5518 1900 320 890  3.6336062 3.7327798}%
\special{ar 5518 1900 320 890  3.7922839 3.8914575}%
\special{ar 5518 1900 320 890  3.9509616 4.0501352}%
\special{ar 5518 1900 320 890  4.1096393 4.2088129}%
\special{ar 5518 1900 320 890  4.2683170 4.3674905}%
\special{ar 5518 1900 320 890  4.4269947 4.5261682}%
\special{ar 5518 1900 320 890  4.5856724 4.6848459}%
%
\special{pn 8}%
\special{ar 4398 1900 320 890  1.5707963 1.6699699}%
\special{ar 4398 1900 320 890  1.7294740 1.8286476}%
\special{ar 4398 1900 320 890  1.8881517 1.9873253}%
\special{ar 4398 1900 320 890  2.0468294 2.1460029}%
\special{ar 4398 1900 320 890  2.2055071 2.3046806}%
\special{ar 4398 1900 320 890  2.3641848 2.4633583}%
\special{ar 4398 1900 320 890  2.5228624 2.6220360}%
\special{ar 4398 1900 320 890  2.6815401 2.7807137}%
\special{ar 4398 1900 320 890  2.8402178 2.9393914}%
\special{ar 4398 1900 320 890  2.9988955 3.0980691}%
\special{ar 4398 1900 320 890  3.1575732 3.2567467}%
\special{ar 4398 1900 320 890  3.3162509 3.4154244}%
\special{ar 4398 1900 320 890  3.4749286 3.5741021}%
\special{ar 4398 1900 320 890  3.6336062 3.7327798}%
\special{ar 4398 1900 320 890  3.7922839 3.8914575}%
\special{ar 4398 1900 320 890  3.9509616 4.0501352}%
\special{ar 4398 1900 320 890  4.1096393 4.2088129}%
\special{ar 4398 1900 320 890  4.2683170 4.3674905}%
\special{ar 4398 1900 320 890  4.4269947 4.5261682}%
\special{ar 4398 1900 320 890  4.5856724 4.6848459}%
%
\special{pn 8}%
\special{ar 2478 1900 320 890  4.7123890 6.2831853}%
\special{ar 2478 1900 320 890  0.0000000 1.5707963}%
%
\special{pn 8}%
\special{ar 5518 1900 320 890  4.7123890 6.2831853}%
\special{ar 5518 1900 320 890  0.0000000 1.5707963}%
%
\special{pn 8}%
\special{ar 2478 1900 320 890  1.5707963 1.6699699}%
\special{ar 2478 1900 320 890  1.7294740 1.8286476}%
\special{ar 2478 1900 320 890  1.8881517 1.9873253}%
\special{ar 2478 1900 320 890  2.0468294 2.1460029}%
\special{ar 2478 1900 320 890  2.2055071 2.3046806}%
\special{ar 2478 1900 320 890  2.3641848 2.4633583}%
\special{ar 2478 1900 320 890  2.5228624 2.6220360}%
\special{ar 2478 1900 320 890  2.6815401 2.7807137}%
\special{ar 2478 1900 320 890  2.8402178 2.9393914}%
\special{ar 2478 1900 320 890  2.9988955 3.0980691}%
\special{ar 2478 1900 320 890  3.1575732 3.2567467}%
\special{ar 2478 1900 320 890  3.3162509 3.4154244}%
\special{ar 2478 1900 320 890  3.4749286 3.5741021}%
\special{ar 2478 1900 320 890  3.6336062 3.7327798}%
\special{ar 2478 1900 320 890  3.7922839 3.8914575}%
\special{ar 2478 1900 320 890  3.9509616 4.0501352}%
\special{ar 2478 1900 320 890  4.1096393 4.2088129}%
\special{ar 2478 1900 320 890  4.2683170 4.3674905}%
\special{ar 2478 1900 320 890  4.4269947 4.5261682}%
\special{ar 2478 1900 320 890  4.5856724 4.6848459}%
%
\special{pn 8}%
\special{ar 1518 1900 320 890  1.5707963 1.6699699}%
\special{ar 1518 1900 320 890  1.7294740 1.8286476}%
\special{ar 1518 1900 320 890  1.8881517 1.9873253}%
\special{ar 1518 1900 320 890  2.0468294 2.1460029}%
\special{ar 1518 1900 320 890  2.2055071 2.3046806}%
\special{ar 1518 1900 320 890  2.3641848 2.4633583}%
\special{ar 1518 1900 320 890  2.5228624 2.6220360}%
\special{ar 1518 1900 320 890  2.6815401 2.7807137}%
\special{ar 1518 1900 320 890  2.8402178 2.9393914}%
\special{ar 1518 1900 320 890  2.9988955 3.0980691}%
\special{ar 1518 1900 320 890  3.1575732 3.2567467}%
\special{ar 1518 1900 320 890  3.3162509 3.4154244}%
\special{ar 1518 1900 320 890  3.4749286 3.5741021}%
\special{ar 1518 1900 320 890  3.6336062 3.7327798}%
\special{ar 1518 1900 320 890  3.7922839 3.8914575}%
\special{ar 1518 1900 320 890  3.9509616 4.0501352}%
\special{ar 1518 1900 320 890  4.1096393 4.2088129}%
\special{ar 1518 1900 320 890  4.2683170 4.3674905}%
\special{ar 1518 1900 320 890  4.4269947 4.5261682}%
\special{ar 1518 1900 320 890  4.5856724 4.6848459}%
%
\special{pn 8}%
\special{ar 4398 1900 320 890  4.7123890 6.2831853}%
\special{ar 4398 1900 320 890  0.0000000 1.5707963}%
%
\special{pn 8}%
\special{ar 1518 1900 320 890  4.7123890 6.2831853}%
\special{ar 1518 1900 320 890  0.0000000 1.5707963}%
%
\special{pn 8}%
\special{pa 718 1900}%
\special{pa 2478 1900}%
\special{fp}%
%
\special{pn 8}%
\special{pa 1518 1000}%
\special{pa 1522 1032}%
\special{pa 1528 1064}%
\special{pa 1532 1096}%
\special{pa 1536 1128}%
\special{pa 1538 1160}%
\special{pa 1542 1192}%
\special{pa 1544 1224}%
\special{pa 1546 1256}%
\special{pa 1550 1286}%
\special{pa 1552 1318}%
\special{pa 1556 1350}%
\special{pa 1562 1382}%
\special{pa 1570 1414}%
\special{pa 1580 1444}%
\special{pa 1592 1474}%
\special{pa 1604 1504}%
\special{pa 1620 1532}%
\special{pa 1638 1560}%
\special{pa 1656 1586}%
\special{pa 1678 1610}%
\special{pa 1700 1632}%
\special{pa 1724 1654}%
\special{pa 1750 1672}%
\special{pa 1776 1690}%
\special{pa 1804 1706}%
\special{pa 1834 1720}%
\special{pa 1862 1734}%
\special{pa 1894 1746}%
\special{pa 1924 1758}%
\special{pa 1956 1768}%
\special{pa 1988 1778}%
\special{pa 2020 1786}%
\special{pa 2052 1794}%
\special{pa 2082 1800}%
\special{pa 2114 1806}%
\special{pa 2146 1812}%
\special{pa 2178 1818}%
\special{pa 2210 1822}%
\special{pa 2242 1826}%
\special{pa 2274 1830}%
\special{pa 2306 1834}%
\special{pa 2336 1838}%
\special{pa 2368 1840}%
\special{pa 2400 1844}%
\special{pa 2432 1846}%
\special{pa 2464 1848}%
\special{pa 2486 1850}%
\special{sp}%
%
\special{pn 8}%
\special{pa 1518 2810}%
\special{pa 1522 2778}%
\special{pa 1528 2748}%
\special{pa 1532 2716}%
\special{pa 1536 2684}%
\special{pa 1538 2652}%
\special{pa 1542 2620}%
\special{pa 1544 2588}%
\special{pa 1546 2556}%
\special{pa 1550 2524}%
\special{pa 1552 2492}%
\special{pa 1556 2460}%
\special{pa 1562 2428}%
\special{pa 1570 2398}%
\special{pa 1580 2366}%
\special{pa 1592 2336}%
\special{pa 1604 2308}%
\special{pa 1620 2278}%
\special{pa 1638 2252}%
\special{pa 1656 2226}%
\special{pa 1678 2202}%
\special{pa 1700 2178}%
\special{pa 1724 2158}%
\special{pa 1750 2138}%
\special{pa 1776 2120}%
\special{pa 1804 2104}%
\special{pa 1834 2090}%
\special{pa 1862 2076}%
\special{pa 1894 2064}%
\special{pa 1924 2054}%
\special{pa 1956 2042}%
\special{pa 1988 2034}%
\special{pa 2020 2026}%
\special{pa 2052 2018}%
\special{pa 2082 2010}%
\special{pa 2114 2004}%
\special{pa 2146 1998}%
\special{pa 2178 1994}%
\special{pa 2210 1988}%
\special{pa 2242 1984}%
\special{pa 2274 1980}%
\special{pa 2306 1976}%
\special{pa 2336 1974}%
\special{pa 2368 1970}%
\special{pa 2400 1968}%
\special{pa 2432 1966}%
\special{pa 2464 1962}%
\special{pa 2486 1960}%
\special{sp}%
%
\special{pn 8}%
\special{pa 718 1000}%
\special{pa 722 1032}%
\special{pa 728 1064}%
\special{pa 732 1096}%
\special{pa 736 1128}%
\special{pa 738 1160}%
\special{pa 742 1192}%
\special{pa 744 1224}%
\special{pa 746 1256}%
\special{pa 750 1286}%
\special{pa 752 1318}%
\special{pa 756 1350}%
\special{pa 762 1382}%
\special{pa 770 1414}%
\special{pa 780 1444}%
\special{pa 792 1474}%
\special{pa 804 1504}%
\special{pa 820 1532}%
\special{pa 838 1560}%
\special{pa 856 1586}%
\special{pa 878 1610}%
\special{pa 900 1632}%
\special{pa 924 1654}%
\special{pa 950 1672}%
\special{pa 976 1690}%
\special{pa 1004 1706}%
\special{pa 1034 1720}%
\special{pa 1062 1734}%
\special{pa 1094 1746}%
\special{pa 1124 1758}%
\special{pa 1156 1768}%
\special{pa 1188 1778}%
\special{pa 1220 1786}%
\special{pa 1252 1794}%
\special{pa 1282 1800}%
\special{pa 1314 1806}%
\special{pa 1346 1812}%
\special{pa 1378 1818}%
\special{pa 1410 1822}%
\special{pa 1442 1826}%
\special{pa 1474 1830}%
\special{pa 1506 1834}%
\special{pa 1536 1838}%
\special{pa 1568 1840}%
\special{pa 1600 1844}%
\special{pa 1632 1846}%
\special{pa 1664 1848}%
\special{pa 1686 1850}%
\special{sp}%
%
\special{pn 8}%
\special{pa 718 2810}%
\special{pa 722 2778}%
\special{pa 728 2748}%
\special{pa 732 2716}%
\special{pa 736 2684}%
\special{pa 738 2652}%
\special{pa 742 2620}%
\special{pa 744 2588}%
\special{pa 746 2556}%
\special{pa 750 2524}%
\special{pa 752 2492}%
\special{pa 756 2460}%
\special{pa 762 2428}%
\special{pa 770 2398}%
\special{pa 780 2366}%
\special{pa 792 2336}%
\special{pa 804 2308}%
\special{pa 820 2278}%
\special{pa 838 2252}%
\special{pa 856 2226}%
\special{pa 878 2202}%
\special{pa 900 2178}%
\special{pa 924 2158}%
\special{pa 950 2138}%
\special{pa 976 2120}%
\special{pa 1004 2104}%
\special{pa 1034 2090}%
\special{pa 1062 2076}%
\special{pa 1094 2064}%
\special{pa 1124 2054}%
\special{pa 1156 2042}%
\special{pa 1188 2034}%
\special{pa 1220 2026}%
\special{pa 1252 2018}%
\special{pa 1282 2010}%
\special{pa 1314 2004}%
\special{pa 1346 1998}%
\special{pa 1378 1994}%
\special{pa 1410 1988}%
\special{pa 1442 1984}%
\special{pa 1474 1980}%
\special{pa 1506 1976}%
\special{pa 1536 1974}%
\special{pa 1568 1970}%
\special{pa 1600 1968}%
\special{pa 1632 1966}%
\special{pa 1664 1962}%
\special{pa 1686 1960}%
\special{sp}%
%
\special{pn 8}%
\special{ar 3598 1900 112 300  0.0000000 6.2831853}%
%
\special{pn 8}%
\special{ar 5518 1900 112 300  0.0000000 0.2912621}%
\special{ar 5518 1900 112 300  0.4660194 0.7572816}%
\special{ar 5518 1900 112 300  0.9320388 1.2233010}%
\special{ar 5518 1900 112 300  1.3980583 1.6893204}%
\special{ar 5518 1900 112 300  1.8640777 2.1553398}%
\special{ar 5518 1900 112 300  2.3300971 2.6213592}%
\special{ar 5518 1900 112 300  2.7961165 3.0873786}%
\special{ar 5518 1900 112 300  3.2621359 3.5533981}%
\special{ar 5518 1900 112 300  3.7281553 4.0194175}%
\special{ar 5518 1900 112 300  4.1941748 4.4854369}%
\special{ar 5518 1900 112 300  4.6601942 4.9514563}%
\special{ar 5518 1900 112 300  5.1262136 5.4174757}%
\special{ar 5518 1900 112 300  5.5922330 5.8834951}%
\special{ar 5518 1900 112 300  6.0582524 6.2832853}%
%
\special{pn 8}%
\special{pa 718 2810}%
\special{pa 718 1020}%
\special{da 0.070}%
%
\special{pn 8}%
\special{pa 2478 2810}%
\special{pa 2478 1020}%
\special{da 0.070}%
%
\special{pn 8}%
\special{pa 5518 2810}%
\special{pa 5518 1020}%
\special{da 0.070}%
%
\special{pn 8}%
\special{pa 3598 2810}%
\special{pa 3598 1020}%
\special{da 0.070}%
%
\special{pn 8}%
\special{pa 3598 1900}%
\special{pa 5518 1900}%
\special{fp}%
\special{pa 5054 1900}%
\special{pa 5038 1900}%
\special{fp}%
%
\special{pn 8}%
\special{pa 3598 2200}%
\special{pa 5518 2200}%
\special{fp}%
\special{pa 5054 2200}%
\special{pa 5038 2200}%
\special{fp}%
%
\special{pn 8}%
\special{pa 3598 1800}%
\special{pa 5518 1800}%
\special{fp}%
\special{pa 5054 1800}%
\special{pa 5038 1800}%
\special{fp}%
%
\special{pn 8}%
\special{pa 3598 1600}%
\special{pa 5518 1600}%
\special{fp}%
\special{pa 5054 1600}%
\special{pa 5038 1600}%
\special{fp}%
%
\special{pn 8}%
\special{pa 3598 1700}%
\special{pa 5518 1700}%
\special{fp}%
\special{pa 5054 1700}%
\special{pa 5038 1700}%
\special{fp}%
%
\special{pn 8}%
\special{pa 4398 2780}%
\special{pa 4402 2748}%
\special{pa 4404 2716}%
\special{pa 4408 2684}%
\special{pa 4414 2654}%
\special{pa 4420 2622}%
\special{pa 4428 2590}%
\special{pa 4438 2560}%
\special{pa 4450 2530}%
\special{pa 4464 2500}%
\special{pa 4480 2472}%
\special{pa 4496 2444}%
\special{pa 4516 2418}%
\special{pa 4538 2396}%
\special{pa 4560 2374}%
\special{pa 4584 2356}%
\special{pa 4610 2338}%
\special{pa 4638 2324}%
\special{pa 4666 2310}%
\special{pa 4696 2298}%
\special{pa 4726 2288}%
\special{pa 4758 2280}%
\special{pa 4790 2274}%
\special{pa 4822 2268}%
\special{pa 4856 2264}%
\special{pa 4890 2260}%
\special{pa 4924 2256}%
\special{pa 4958 2254}%
\special{pa 4992 2252}%
\special{pa 5026 2252}%
\special{pa 5060 2250}%
\special{pa 5094 2250}%
\special{pa 5128 2250}%
\special{pa 5160 2250}%
\special{pa 5194 2250}%
\special{pa 5226 2250}%
\special{pa 5258 2250}%
\special{pa 5290 2250}%
\special{pa 5322 2250}%
\special{pa 5354 2250}%
\special{pa 5384 2250}%
\special{pa 5416 2250}%
\special{pa 5448 2250}%
\special{pa 5478 2250}%
\special{pa 5510 2250}%
\special{pa 5518 2250}%
\special{sp}%
%
\special{pn 8}%
\special{pa 3598 1010}%
\special{pa 3602 1042}%
\special{pa 3604 1074}%
\special{pa 3608 1106}%
\special{pa 3614 1138}%
\special{pa 3620 1170}%
\special{pa 3628 1200}%
\special{pa 3638 1232}%
\special{pa 3650 1262}%
\special{pa 3664 1292}%
\special{pa 3680 1320}%
\special{pa 3696 1346}%
\special{pa 3716 1372}%
\special{pa 3738 1396}%
\special{pa 3760 1416}%
\special{pa 3784 1436}%
\special{pa 3810 1452}%
\special{pa 3838 1468}%
\special{pa 3866 1480}%
\special{pa 3896 1492}%
\special{pa 3926 1502}%
\special{pa 3958 1510}%
\special{pa 3990 1518}%
\special{pa 4022 1524}%
\special{pa 4056 1528}%
\special{pa 4090 1532}%
\special{pa 4124 1534}%
\special{pa 4158 1536}%
\special{pa 4192 1538}%
\special{pa 4226 1540}%
\special{pa 4260 1540}%
\special{pa 4294 1540}%
\special{pa 4328 1542}%
\special{pa 4360 1542}%
\special{pa 4394 1542}%
\special{pa 4426 1542}%
\special{pa 4458 1542}%
\special{pa 4490 1542}%
\special{pa 4522 1542}%
\special{pa 4554 1542}%
\special{pa 4584 1542}%
\special{pa 4616 1542}%
\special{pa 4648 1542}%
\special{pa 4678 1540}%
\special{pa 4710 1540}%
\special{pa 4718 1540}%
\special{sp}%
%
\special{pn 8}%
\special{pa 4398 1010}%
\special{pa 4402 1042}%
\special{pa 4404 1074}%
\special{pa 4408 1106}%
\special{pa 4414 1138}%
\special{pa 4420 1170}%
\special{pa 4428 1200}%
\special{pa 4438 1232}%
\special{pa 4450 1262}%
\special{pa 4464 1292}%
\special{pa 4480 1320}%
\special{pa 4496 1346}%
\special{pa 4516 1372}%
\special{pa 4538 1396}%
\special{pa 4560 1416}%
\special{pa 4584 1436}%
\special{pa 4610 1452}%
\special{pa 4638 1468}%
\special{pa 4666 1480}%
\special{pa 4696 1492}%
\special{pa 4726 1502}%
\special{pa 4758 1510}%
\special{pa 4790 1518}%
\special{pa 4822 1524}%
\special{pa 4856 1528}%
\special{pa 4890 1532}%
\special{pa 4924 1534}%
\special{pa 4958 1536}%
\special{pa 4992 1538}%
\special{pa 5026 1540}%
\special{pa 5060 1540}%
\special{pa 5094 1540}%
\special{pa 5128 1542}%
\special{pa 5160 1542}%
\special{pa 5194 1542}%
\special{pa 5226 1542}%
\special{pa 5258 1542}%
\special{pa 5290 1542}%
\special{pa 5322 1542}%
\special{pa 5354 1542}%
\special{pa 5384 1542}%
\special{pa 5416 1542}%
\special{pa 5448 1542}%
\special{pa 5478 1540}%
\special{pa 5510 1540}%
\special{pa 5518 1540}%
\special{sp}%
%
\special{pn 8}%
\special{pa 3598 2780}%
\special{pa 3602 2748}%
\special{pa 3604 2716}%
\special{pa 3608 2684}%
\special{pa 3614 2654}%
\special{pa 3620 2622}%
\special{pa 3628 2590}%
\special{pa 3638 2560}%
\special{pa 3650 2530}%
\special{pa 3664 2500}%
\special{pa 3680 2472}%
\special{pa 3696 2444}%
\special{pa 3716 2418}%
\special{pa 3738 2396}%
\special{pa 3760 2374}%
\special{pa 3784 2356}%
\special{pa 3810 2338}%
\special{pa 3838 2324}%
\special{pa 3866 2310}%
\special{pa 3896 2298}%
\special{pa 3926 2288}%
\special{pa 3958 2280}%
\special{pa 3990 2274}%
\special{pa 4022 2268}%
\special{pa 4056 2264}%
\special{pa 4090 2260}%
\special{pa 4124 2256}%
\special{pa 4158 2254}%
\special{pa 4192 2252}%
\special{pa 4226 2252}%
\special{pa 4260 2250}%
\special{pa 4294 2250}%
\special{pa 4328 2250}%
\special{pa 4360 2250}%
\special{pa 4394 2250}%
\special{pa 4426 2250}%
\special{pa 4458 2250}%
\special{pa 4490 2250}%
\special{pa 4522 2250}%
\special{pa 4554 2250}%
\special{pa 4584 2250}%
\special{pa 4616 2250}%
\special{pa 4648 2250}%
\special{pa 4678 2250}%
\special{pa 4710 2250}%
\special{pa 4718 2250}%
\special{sp}%
%
\special{pn 8}%
\special{pa 3598 2000}%
\special{pa 5518 2000}%
\special{fp}%
\special{pa 5054 2000}%
\special{pa 5038 2000}%
\special{fp}%
%
\special{pn 8}%
\special{pa 3598 2100}%
\special{pa 5518 2100}%
\special{fp}%
\special{pa 5054 2100}%
\special{pa 5038 2100}%
\special{fp}%
\put(12.0000,-32.0000){\makebox(0,0)[lb]{$(S^{1} \times T^{2},\mathcal{G}^{0})$}}%
\put(36.0000,-31.9000){\makebox(0,0)[lb]{After inserting a sorid torus.}}%
\end{picture}%

\end{center}

\vspace{30pt}

We can decompose $S^{3}$ into $\mathcal{F}^{t}$-saturated subsets $K^{t}_{1}$, $K^{t}_{2}$ and $T^{2} \times [0,1]$. Let $(\theta_{1},\theta_{2},s)$ be the coordinates on $T^2 \times [0,1]$ such that 
\begin{itemize}
\item $\theta_{1}$ parametrizes a meridian of $K^{t}_{1}$ and a longitude of $K^{t}_{2}$, and
\item $\theta_{2}$ parametrizes a meridian of $K^{t}_{2}$ and a longitude of $K^{t}_{1}$.
\end{itemize}
By the construction, we can construct $\mathcal{F}^{t}$ so that the leaves of $\mathcal{F}^{t}$ are transverse to a $1$-form $d\theta_{1} + d\theta_{2}$ on $T^{2} \times [0,1]$.

By the Rummler-Sullivan criterion, let us show
\begin{proposition}\label{Proposition : F t}
$\mathcal{F}^{t}$ is minimizable for nonzero $t$ in $[0,1]$.
\end{proposition}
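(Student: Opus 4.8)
The plan is to produce a \emph{Rummler characteristic form} for $\mathcal{F}^t$ and then quote the Rummler--Sullivan criterion. Recall (Rummler; see Chapter~10 of Candel and Conlon \cite{Candel Conlon}) that a $1$-dimensional foliation on a closed manifold is minimizable if and only if it carries a $1$-form $\chi$ that is positive on the leaves and satisfies $\iota_X d\chi = 0$ for every vector field $X$ tangent to the leaves. Indeed, such a $\chi$ is precisely a leafwise volume form whose bidegree $(1,1)$ part of $d\chi$ vanishes, which by Rummler's formula means that the mean curvature vanishes for the associated metric, so the leaves become geodesics. Thus it suffices to construct, for each $t \neq 0$, one smooth global $1$-form $\chi$ on $S^3$ with these two properties, and the point will be that the positive radius of the inserted solid tori is exactly what makes such a $\chi$ available.

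I would build $\chi$ separately on the three pieces $K^t_1$, $T^2\times[0,1]$ and $K^t_2$ and then glue. On $K^t_1 = S^1\times D^2$ the leaves are the core-parallel circles running in the $\theta_2$-direction, so I take $\chi = d\theta_2$ there; this is closed (hence $\iota_X d\chi=0$ trivially), it is positive on every leaf including the core, and, crucially, it extends smoothly over the whole solid torus because $\theta_2$ is the longitude and is nonsingular when $t\neq 0$. Symmetrically I take $\chi = d\theta_1$ on $K^t_2$. On the collar $T^2\times[0,1]$ I arrange the construction so that the leaves are tangent to the level tori and their directions rotate from $\partial_{\theta_2}$ at $s=0$ to $\partial_{\theta_1}$ at $s=1$; writing the unit leaf direction as $\alpha(s)\partial_{\theta_1}+\beta(s)\partial_{\theta_2}$, I set $\chi = \alpha(s)\,d\theta_1 + \beta(s)\,d\theta_2$. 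Since the leaves are transverse to $d\theta_1+d\theta_2$, the pair $(\alpha,\beta)$ stays in the open first quadrant, so $\chi$ agrees with $d\theta_2$ near $s=0$ and with $d\theta_1$ near $s=1$ and glues smoothly to the forms already chosen on the two solid tori.

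The two required properties are then immediate: $\chi(X)=\alpha^2+\beta^2=1>0$, so $\chi$ is positive on every leaf, while $d\chi = ds\wedge(\alpha'\,d\theta_1+\beta'\,d\theta_2)$ gives $\iota_X d\chi = -\tfrac{1}{2}(\alpha^2+\beta^2)'\,ds = 0$, because $\alpha^2+\beta^2\equiv 1$. Hence $\chi$ is a global Rummler form and $\mathcal{F}^t$ is minimizable. The main obstacle is precisely the simultaneous fulfilment of these conditions by a single smooth global form: transversality to $d\theta_1+d\theta_2$ is what guarantees positivity on the spiralling leaves, the unit-norm normalization is what forces the relative-closedness identity $\iota_X d\chi=0$, and the positivity of the radius $t$ is what allows $d\theta_1$ and $d\theta_2$ to extend across the cores. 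For $t=0$ the solid tori degenerate to the axes $L_1,L_2$, the forms $d\theta_i$ become singular there, and the construction breaks down, in accordance with the non-minimizability of $\mathcal{F}^0$.
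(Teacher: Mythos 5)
There is a genuine gap, and it is fatal: you have misdescribed the foliation on the middle piece. The sentence ``I arrange the construction so that the leaves are tangent to the level tori and their directions rotate from $\partial_{\theta_2}$ at $s=0$ to $\partial_{\theta_1}$ at $s=1$'' is not an arrangement you are free to make. By definition, $\mathcal{F}^{t}$ agrees with the Candel--Conlon foliation $\mathcal{F}^{0}$ outside the two inserted solid tori; only inside $K^{t}_{1}\sqcup K^{t}_{2}$ is anything changed. On $T^{2}\times[0,1]$ the leaves of $\mathcal{F}^{t}$ are leaves of $\mathcal{F}^{0}$: they lie on the trumpet-like surfaces produced by the turbulization, enter through one boundary torus, spiral around, and exit through the other (for $t=0$ they accumulate on the axes $L_{1}$, $L_{2}$). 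In particular they have a nonvanishing $\partial/\partial s$--component and are not tangent to the level tori. The only property of the collar foliation at your disposal is the one the paper records: the leaves are transverse to the closed $1$-form $d\theta_{1}+d\theta_{2}$. A foliation whose collar leaves lie on the level tori is a different foliation altogether; note that such a family could not converge to $\mathcal{F}^{0}$ as $t\to 0$, so proving its minimizability says nothing about the family in the proposition.

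Once the collar foliation is described correctly, your form fails the second condition. For $X$ tangent to a genuine leaf of $\mathcal{F}^{t}$ in the collar one has $ds(X)\neq 0$, so with $\chi=\alpha(s)\,d\theta_{1}+\beta(s)\,d\theta_{2}$,
\begin{equation*}
\iota_{X}d\chi = ds(X)\bigl(\alpha'(s)\,d\theta_{1}+\beta'(s)\,d\theta_{2}\bigr)-\bigl(\alpha'(s)\,d\theta_{1}(X)+\beta'(s)\,d\theta_{2}(X)\bigr)\,ds,
\end{equation*}
which has no reason to vanish unless $\alpha$ and $\beta$ are constant. That constancy is exactly the repair the paper makes: on the collar it takes the \emph{closed} form $\chi=d\theta_{1}+d\theta_{2}$, so relative closedness is automatic and only the transversality statement is needed for positivity on the spiralling leaves; then, inside each solid torus, it interpolates away the meridian term with a radial cutoff, $\chi=(1-\phi_{1})\,d\theta_{1}+(1-\phi_{2})\,d\theta_{2}$, where $\phi_{i}$ is a function of the radius $r_{i}$ alone, equal to $0$ near $\partial K^{t}_{i}$ and to $1$ near the core. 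There $d\chi=-d\phi_{1}\wedge d\theta_{1}$ is annihilated by leaf vectors because the product leaves keep $r_{1}$ and $\theta_{1}$ constant, and positivity on leaves comes from the surviving longitudinal term $d\theta_{2}$. Your treatment of the solid tori alone (take $d\theta_{2}$ on $K^{t}_{1}$ and $d\theta_{1}$ on $K^{t}_{2}$) is sound as far as it goes, but it cannot be glued to a form that is positive on the actual spiralling collar leaves without reintroducing the meridian term near $\partial K^{t}_{i}$, which is precisely what the cutoff construction accomplishes.
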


\begin{proof}
By the Rummler-Sullivan criterion (see Sullivan \cite{Sullivan}), $\mathcal{F}^{t}$ is minimizable if and only if there exist a $1$-form $\chi$ on $S^{3}$ such that $\chi|_{T\mathcal{F}^{t}}$ has no zero and $d\chi|_{T\mathcal{F}^{t}}=0$.

We take the decomposition of $S^{3}$ into $\mathcal{F}^{t}$-saturated subsets 
\begin{equation}\label{Equation : Decomposition}
S^{3} = K^{t}_{1} \sqcup K^{t}_{2} \sqcup (T^{2} \times [0,1])
\end{equation}
as above. We take a coordinate $(\theta_{1},\theta_{2},s)$ on $T^{2} \times [0,1]$ as noted in the paragraph previous to Proposition~\ref{Proposition : F t}. We assume that $\mathcal{F}^{t}$ is transverse to $d\theta_{1} + d\theta_{2}$ on $T^{2} \times [0,1]$, while $\mathcal{F}^{t}|_{K^{t}_{i}}$ is the product foliation on a solid torus for $i=1$ and $2$. Let $A_{i}$ be the axis of $K^{t}_{i}$. We can extend $\theta_{1}$ from $T^{2} \times [0,1]$ to $S^{3} - A_{1}$ so that 
\begin{itemize}
\item $\theta_{1}$ is the composite of a diffeomorphism $S^{3} - A_{1} \cong S^{1} \times D^{2}$ and the first projection $S^{1} \times D^{2} \longrightarrow S^{1}$ and
\item $d\theta_{1}$ is transverse to $\mathcal{F}^{t}$ on $S^{3} - K^{t}_{2}$.
\end{itemize}
We extend $\theta_{2}$ to $S^{3} - A_{2}$ in a similar way.

Let $r_{i}$ be the radius coordinate on the $D^{2}$-component of $K^{t}_{i}$. Let $\phi_{i}$ be a nonnegative smooth function on $S^{3}$ such that
\begin{itemize}
\item $\phi_{i} = 0$ on $S^{3} - K^{t}_{i}$,
\item $\phi_{i}$ is a function of $r_{i}$ on $K^{t}_{i}$ and
\item $\phi_{i}$ is $1$ on an open neighborhood of $A_{i}$.
\end{itemize}

We define a $1$-form $\chi$ on $S^{3}$ by
\begin{equation}
\chi = (1 - \phi_{1}) d\theta_{1} + (1 - \phi_{2}) d\theta_{2}.
\end{equation}
Note that $(1 - \phi_{i}) d\theta_{i}$ is well-defined on $S^{3}$, though $d\theta_{i}$ is not defined on the axis $A_{i}$ of $K^{t}_{i}$. 

We will confirm that $\chi$ satisfies the conditions in the Rummler-Sullivan's criterion for $\mathcal{F}^{t}$ on the components of the decomposition \eqref{Equation : Decomposition}. Since the restriction of $\chi$ to $T^{2} \times [0,1]$ is equal to $d\theta_{1} + d\theta_{2}$, $\chi|_{T^{2} \times [0,1]}$ is transverse to $\mathcal{F}^{t}|_{T^{2} \times [0,1]}$ and $d\chi = 0$. On $K^{t}_{1}$, we have
\begin{equation}
\chi|_{K^{t}_{1}} = (1 - \phi_{1}) d\theta_{1} + d\theta_{2}
\end{equation}
Since $d\theta_{2}$ is transverse to $\mathcal{F}^{t}|_{K^{t}_{1}}$ and $d\theta_{1}|_{T\mathcal{F}^{t}}$ is zero, $\chi|_{K^{t}_{1}}$ is transverse to $\mathcal{F}^{t}|_{K^{t}_{1}}$. We have $d\chi|_{K^{t}_{1}} = d\phi_{1} \wedge d\theta_{1}$, and hence $(d\chi|_{K^{t}_{1}})|_{T\mathcal{F}^{t}}=0$. We can prove that $\chi|_{K^{t}_{2}}$ also satisfies the two conditions in the Rummler-Sullivan's characterization in the same way. Hence $\mathcal{F}^{t}$ is minimizable.
\end{proof}

Let $X^{s}$ be a nowhere vanishing vector field tangent to $\mathcal{F}^{s}$ for $s=0$ and $1$. Let $X^{t} = t X^{1} + (1-t) X^{0}$ for $0 < t < 1$. Then $X^{t}$ is nowhere vanishing and defines a foliation $\mathcal{H}^{t}$. In this family $\{\mathcal{H}^{t}\}_{t \in [0,1]}$ of foliations, $\mathcal{H}^{t}$ is non-minimizable for $0 \leq t < 1$ and $\mathcal{H}^{1}$ is minimizable. In fact, $\mathcal{H}^{t}$ is shown to be non-minimizable for $0 \leq t < 1$ by an argument similar to the proof of non-minimizability of $\mathcal{F}^{0}$ by Candel and Conlon in Example 10.5.19 of \cite{Candel Conlon}. The proof of the minimizability of $\mathcal{H}^{1}$ is similar to the proof of Proposition \ref{Proposition : F t}.

\end{document}